\newtheorem{theorem}{Theorem}[section]
\newtheorem{prop}[theorem]{Proposition}
\newtheorem{assumption}[theorem]{Assumption}
\theoremstyle{definition}
\newtheorem{definition}[theorem]{Definition}
\newtheorem{example}[theorem]{Example}
\theoremstyle{remark}
\newtheorem{remark}[theorem]{Remark}
\numberwithin{equation}{section}
\DeclareMathAlphabet{\mathsl}{OT1}{cmss}{m}{sl}
\SetMathAlphabet{\mathsl}{bold}{OT1}{cmss}{bx}{sl}
\newcommand{\al}{\ensuremath{\alpha}}
\newcommand{\ga}{\ensuremath{\gamma}}
\newcommand{\de}{\ensuremath{\delta}}
\newcommand{\ka}{\ensuremath{\kappa}}
\newcommand{\si}{\ensuremath{\sigma}}
\newcommand{\om}{\ensuremath{\omega}}
\newcommand{\Si}{\ensuremath{\Sigma}}
\newcommand{\Om}{\ensuremath{\Omega}}
\newcommand{\cB}{\ensuremath{\mathcal B}}
\newcommand{\cC}{\ensuremath{\mathcal C}}
\newcommand{\cF}{\ensuremath{\mathcal F}}
\newcommand{\cL}{\ensuremath{\mathcal L}}
\newcommand{\cP}{\ensuremath{\mathcal P}}
\newcommand{\bbN}{\ensuremath{\mathbb N}}
\newcommand{\bbP}{\ensuremath{\mathbb P}}
\newcommand{\bbR}{\ensuremath{\mathbb R}}
\newcommand{\bbZ}{\ensuremath{\mathbb Z}}
\newcommand{\md}{\ensuremath{\mathrm{d}}}
\newcommand{\mD}{\ensuremath{\mathrm{D}}}
\newcommand{\mm}[1]{\ensuremath{\scriptscriptstyle #1}}
\newcommand{\Norm}[2]{%
  \ensuremath{%
    \mathchoice{\big\lVert #1 \big\rVert}
     {\lVert #1 \rVert}
     {\lVert #1 \rVert}
     {\lVert #1 \rVert}_{\raisebox{-.0ex}{$\scriptstyle #2$}}
  }
}
\DeclareMathOperator{\mean}{\mathbb{E}}
\DeclareMathOperator{\Mean}{\mathrm{E}}
\DeclareMathOperator{\prob}{\mathbb{P}} 
\DeclareMathOperator{\Prob}{\mathrm{P}} 
\newcommand{\ldef}{\ensuremath{\mathrel{\mathop:}=}}
\newcommand{\indicator}{\mathbbm{1}}
\begin{document}

\title{Homogenization theory of random walks in degenerate random environment}


\author{Sebastian Andres}
\address{Technische Universit\"at Braunschweig}
\curraddr{Institut f\"ur Mathematische Stochastik, Universit\"atsplatz 2, 38106 Braunschweig}
\email{sebastian.andres@tu-braunschweig.de}
\thanks{}


\subjclass[2000]{Primary 60K37, 60F17; secondary 80M40, 82B43}

\keywords{Random conductance model, invariance principle, heat kernel, local limit theorem}

\date{\today}

\dedicatory{}

\begin{abstract}
  Recent progress on the understanding of the Random Conductance Model is reviewed. A particular emphasis is on homogenization results such as functional central limit theorems, local limit theorems and heat kernel estimates for almost every realization of the environment, established for random walks among stationary ergodic conductances that are possibly unbounded but satisfy certain moment conditions. 
\end{abstract}

\maketitle


\section{Introduction}\label{sec:INTRO}
\subsection{Background}
One main motivation for the study of random walks in random environment is to explore the relationships between the microscopic properties of a disordered medium and its large scale conductivity. Examples include heat conduction in porous media or composite materials, electrical conducting properties in metals with impurities, to name just a few.
One particular question of interest is whether homogenization occurs, in the sense that on macroscopic scales the irregular structures in the random medium are ‘smoothed out’  and only an averaged effect remains; the alternative being 
that an effect of the irregularities persists even at macroscopic scales leading to an anomalous behaviour.
However, the relation between the microscopic properties of the medium and its effective ones is subtle and highly sensitive to the geometrical, spatial and statistical properties of the underlying
medium. So it is natural to ask what kind of statistical properties of the random medium ensure a homogenization of the rescaled random walker on macroscopic scales.

The mathematical rigorous analysis of random walks in random environments has been at the centre of interest in probability theory since the early 1980s,  see e.g.\ the reviews \cite{Hu96, Sz06, Ze04}. During the last decades there has been a major research effort  focused on the derivation of heat kernel estimates and functional central limit theorems (a.k.a.\ invariance principles), i.e.\ the convergence of the random walk towards Brownian motion under the diffusive rescaling of space and time. In this article we will review such homogenization results by means of the random conductance model, one the most prominent models for a random walk in random environment. On the other hand, depending on the properties of the environment, sub-diffusive behaviour may occur, which is typical for diffusion processes on fractal spaces, see e.g.\ \cite{Ba98, Ku14}.  One manifestation of such irregular behaviour is the presence of heat kernel fluctuations, appearing in many instances of stochastic processes on fractals and in random media. In particular, for processes in low dimensions and models at criticality, heat kernel fluctuations are known to occur, caused by local irregularities in the random medium, see \cite{ACK23} for a recent review on this topic.

\subsection{The model}
 The \emph{random conductance model} (RCM) is a particular class of reversible random walks in random environments defined as follows. For $d\geq 2$ consider  the Euclidean lattice $(\mathbb{Z}^d, E_d)$, where $E_d=\{e=\{x,y\}: x,y\in\bbZ^d, |x-y|=1\}$ denotes the set of non-oriented nearest neighbour edges. We call two vertices $x,y \in \bbZ^d$ adjacent if $\{x,y\} \in E_d$, and we then write $x \sim y$.
Each edge  $\{x,y\}\in  E_d$ is endowed with a non-negative weight $\om(\{x,y\})$, also called conductance. We write $\om(x,y)=\om(\{x,y\})=\om(y,x)$, 
and $\om(x,y)=0$ if $\{x, y\} \not\in E_d$.
We are particularly interested in the situation when the collection $\{\omega(e): e \in E_d\}$ is itself random and distributed according to a stationary and ergodic law $\prob$ on the measurable space $(\Omega, \mathcal{F}) \ldef ([0, \infty)^{E_{d}}, \mathcal{B}([0, \infty))^{\otimes E_{d}})$ equipped with the Borel-$\sigma$-algebra.
For a fixed $\om \in \Omega$ set 
\begin{align*}  
  \mu^\om(x) \ldef  \sum_{y\sim x} \om(x,y), \qquad
 p^\om(x,y) \ldef \frac{\om(x,y)}{\mu^\om(x)}. 
\end{align*}
We endow $(\Omega, \mathcal{F})$ with the $d$-parameter group of space shift $(\tau_{x})_{x \in \mathbb{Z}^{d}}$ acting on $\Omega$ as
\begin{align*} 
  (\tau_{z}\, \omega)(\{x, y\})
  \;\ldef\;
  \omega(\{x+z, y+z\}),
  \qquad \forall\, \{x, y\} \in E_{d}.
\end{align*}

We will study continuous time random walks on $\bbZ^d$ which 
jump according to the probabilities $p^\om(x,y)$.
There are two natural choices of this. The first one is called
\emph{variable speed random walk} (VSRW) $X = (X_t\!: t \geq 0 )$, which waits at $x$ an exponential time with mean $1/\mu^\om(x)$, with generator
\[
 \big(\cL_X^{\omega} f)(x)= \sum_{y \sim x} \omega(x,y) \, \left(f(y)-f(x)\right).
\]
It is a reversible Markov chain with symmetrising measure given by the counting measure. We denote by $\Prob_x^\om$ the law of the process starting at the vertex $x \in \bbZ^d$.  The corresponding expectation will be denoted by $\Mean_x^\om$.  
The second walk, called the  \emph{constant speed random walk} (CSRW), denoted by $Y = (Y_t\!: t \geq 0 )$  waits at $x$ for an  exponential time 
with mean $1$, with generator 
\begin{align*} 
  \big(\cL_Y^{\om}\, f\big)(x)
  \;=\;
  \frac 1 {\mu^\om(x)} \sum_{y \sim x} \om(x,y) \, \big(f(y) - f(x)\big).
\end{align*}
If $\mu^\om(x)=0$ we write $\cL_X f(x)=\cL_Y f(x)=0$.
The CSRW is reversible w.r.t.\ the measure $\mu^\om$. Note that $X$ and $Y$ are time-changes of each other. 
The CSRW is mainly of probabilistic interest due to its resemblence to the corresponding random walk in discrete time. On the other hand,  VSRWs and their generators are objects of study in stochastic homogenization in PDE theory, and they are also relevant due to their appearance in statistical mechanics models.  
We denote by $p^\omega(t,x,y)$ and $q^{\om}(t,x,y)$, $x, y \in \bbZ^d$, $t \geq 0$, the transition density with respect to the reversible measure (or \emph{heat kernels}) of the VSRW and CSRW, respectively, i.e.\
\begin{align*}
  p^\omega(t,x,y) \ldef \Prob^\omega_x(X_t=y), \qquad  q^{\om}(t,x,y) \ldef \frac{\Prob_x^\om\big(Y_t = y\big)}{\mu^{\om}(y)}.
\end{align*}

As an important example for an environment, consider the case when the conductances are independent and identically distributed (i.i.d.). Note that $X$ or $Y$ never jump across $e$ whenever $\om(e)=0$. So if $\prob( \om(e)>0)$ is less 
than $p_c\in (0,1)$, the critical threshold for Bernoulli bond percolation on $\bbZ^d$, then both $X$ and $Y$  are $\prob$-a.s.\ confined to a finite set. Thus it is very natural to assume that $\prob( \om(e) >0 ) > p_c$.
Then, it is well known that there exists a $\prob$-a.s.\ unique infinite cluster $\cC_\infty$ of connected open edges. We denote by $\prob_0[ \,\cdot\, ] \ldef \prob\bigl[ \,\cdot \,|\, 0 \in \mathcal{C}_{\infty}\bigr]$ the conditional distribution given the event $\{0 \in \mathcal{C}_{\infty}\}$.

\subsection{Questions and aims}

In the late 1980s, the pioneering works \cite{dMFGW89} and \cite{KV86} already proved that a functional central limit theorem holds under the \emph{average}, or \emph{annealed}, law (i.e.\ the joint law of the environment and the process). The results also imply an `invariance principle in probability'. On the other hand, much effort in the past 20 years has been invested in the derivation of the following \emph{quenched} statements for the random walks and their heat kernel, holding for almost all realisations of the environment (for simplicity  only stated formally for the VSRW here).

\begin{itemize}
\item \textbf{Quenched functional central limit theorem (QFCLT).} For almost all realisations of the environment, the rescaled random walk $( n^{-1} X_{n^ 2 t})_{t\geq 0}$ converges in law towards a Brownian motion on $\mathbb{R}^ d$ with deterministic covariance matrix $\Sigma^2$.

\item  \textbf{Quenched local central limit theorem.}  For almost all realisations of the environment, the transition kernel of the random walk converges under suitable rescaling towards the Gaussian transition kernel of Brownian motion with covariance matrix $\Sigma^2$, i.e.\
$$n^d p_{n^2t}^\omega (0,\lfloor nx\rfloor)  \underset{n\to \infty} {\longrightarrow} \bar p^{\Sigma}(t,0,x),$$
uniformly over compact sets, where 
\begin{align} \label{eq:GaussHK}
  \bar p^\Sigma(t,x,y)
  \;\ldef\;
  \frac{1}{\sqrt{(2\pi t)^d \det \Si^2}}
  \exp\!\left(-\frac{(x-y) \cdot(\Si^2)^{-1}(x-y)}{2t}\right),
\end{align}
and for any $x\in \bbR^d$ we write  $\lfloor nx\rfloor$ for a closest point to $nx$ in the underlying graph w.r.t.\ the $|\cdot|_\infty$-norm. 
\item \textbf{Gaussian heat kernel estimates.} For almost all realisations of the environment, we have
 $ p^\omega(t,x,y) \leq c \, t^{-d/2} \exp\big(-c  |x-y|^2/t\big)$,
and a similar lower bound, for sufficiently large $t$ and $|x-y|$.
\end{itemize}

Since the heat kernel is a fundamental solution of the heat equation $\partial_t u - \cL_X^\omega u=0$, a closely related, purely analytic statement is the parabolic Harnack inequality. In fact, since the 1960s Harnack inequalities have had a big influence on PDE theory and differential geometry, in particular they provide control on the oscillations and  imply H\"older regularity of harmonic functions.

\medskip

For instance, in the special case of i.i.d.\ environments (discussed in more detail in Section~\ref{sec:iid} below), the QFCLT does hold in general once $\mathbb{P}[\omega(e)>0]>p_c$.  On the other hand, 
local limit theorems are stronger statements, which are rare and difficult to prove for Markov chains in general. In fact, already in the i.i.d.\ case, where the QFCLT does hold, we see the surprising effect that due to a trapping phenomenon (caused by the presence of very small or very large conductances)  the heat kernel may behave sub-diffusively  \cite{BBHK08}, in particular a quenched local limit theorem and Gaussian heat kernel bounds may fail in general. Nevertheless, they do hold, for instance, in the case of uniformly elliptic conductances, where $\mathbb{P}(c_1 \leq \omega_e \leq c_2)=1$ for some $0<c_1<c_2<\infty$, or for  random walks on i.i.d.\ supercritical percolation clusters (see \cite{De99, Ba04, BH09}).

Invariance principles and local limit theorems for random walks in random environment can be regarded as a probabilistic counterpart to the area of stochastic homogenization in PDE theory, initiated in the celebrated works of Kozlov \cite{Ko79} and Papanicolaou-Varadhan \cite{PV81}. During the last decade, starting from the works \cite{GO11,GO12}, the area of quantitative stochastic homogenization of elliptic and parabolic equations has witnessed tremendous progress, see e.g.\ \cite{AD18, armstrong2016b, armstrong2016, armstrong2014,BellaFFOtto2016,BellaFO2016,DG21,DGO20,GNO15,GNOreg,GO15,GO14, BK24}, and the two monographs \cite{AK22,AKM19}.

In this article, supplementing the already existing surveys \cite{Bi11, Ku14}, our focus will be on the more qualitative probabilistic questions concerning the random walk and its heat kernel listed above, in particular for RCMs in general ergodic environments only required to satisfy certain moment conditions. In contrast to the quantitative homogenization results mentioned above, most of the main results discussed in this article do not require any assumptions on the correlation decay of the environment such as mixing assumptions, for instance.  

The rest of the paper is organised as follows. First, we summarize the situation in the i.i.d.\ case in Section~\ref{sec:iid}. Then, in Section~\ref{sec:ergodic} we will consider general ergodic environments satisfying certain moment conditions, where we will also provide a detailed overview of the proof of the QFCLT in this setting. Homogenization of RCMs with time-dependent conductances will be discussed in Section~\ref{sec:dynamic}. Finally, in Section~\ref{sec:longrange} we will review recent results for RCMs with long-range jumps.

Throughout the paper we write $c$ to denote a positive constant which may change on each appearance. Constants denoted $c_{i}$ will be the same through the paper.

\section{The i.i.d.\ case} \label{sec:iid}
In this section we consider the special case of i.i.d.\ conductances before we will discuss more general ergodic environments from the next section on. That is, throughout this section we assume that the random environment is given by i.i.d.\ random variables $(\om(e), e \in  E_d)$  taking values in $[0, \infty)$.

\subsection{Quenched invariance principle and other scaling limits}
We start by discussing the QFCLT, which has been studied by a number of different authors under various restrictions on the law of the environment about 15-20 years ago, see \cite{SS04, BB07, MP07,BP07, BD10},  with all approaches synthesized together in \cite{ABDH13} into the following result.

\begin{theorem}[\cite{ABDH13}]\label{thm:QFCLTiid} Let $d \ge 2$ and 
suppose that $(\om(e), e\in E_d)$ are i.i.d.,  $\om(e)\geq 0$ $\prob$-a.s.\  and $\prob( \om(e) >0 ) > p_c$.

\begin{enumerate}[(i)]
\item Let $X$ be the VSRW and set $X^{(n)}\ldef  (n^{-1} X_{n^2t}: \, t\geq 0)$, $n \in \bbN$. Then, 
$\prob_0$-a.s. 
$X^{(n)}$ converges, under $\Prob_0^\om$,
in law to a Brownian motion on $\bbR^d$ with covariance matrix
$\sigma_X^2 I$, where $\sigma_X>0$ is non-random. (Here  $I$ denotes the identity matrix.)

\item Let $Y$ be the CSRW and set $Y^{(n)}\ldef  (n^{-1} Y_{n^2t}:\, t\geq 0)$, $n \in \bbN$. Then, $\prob_0$-a.s. $Y^{(n)}$ converges, 
under $\Prob_0^\om$,
in law to a Brownian motion on $\bbR^d$ with covariance matrix
$\sigma_Y^2 I$, where    
\begin{equation*}
 \sigma_Y^2  = 
\begin{cases}
\sigma_X^2 /\mean_0[\mu^\om(0)], & \hbox{ if } \mean[\om(e)]<\infty,     \\
0, &  \hbox{ if } \mean[\om(e)] = \infty.
\end{cases}
\end{equation*}
\end{enumerate}
\end{theorem}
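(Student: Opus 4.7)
My plan is to prove Theorem \ref{thm:QFCLTiid} by the corrector method combined with the environment-viewed-from-the-particle, following the unified approach of \cite{ABDH13}. Since under $\prob_0$ both $X$ and $Y$ are confined to the unique infinite cluster $\cC_\infty$, I would work on $\cC_\infty$ throughout. The environment process $\bar\omega_t \ldef \tau_{X_t}\omega$ is Markovian under $\Prob_0^\omega$ with invariant measure $\prob_0$ (reversibility with respect to counting measure), and ergodic by combining the shift-ergodicity of $\prob$ with $\prob_0$-a.s.\ irreducibility of $X$ on $\cC_\infty$. For each $\xi \in \bbR^d$ I would construct a corrector $\chi_\xi \colon \Omega \times \bbZ^d \to \bbR$ such that $\Phi_\xi(\omega,x) \ldef \xi\cdot x - \chi_\xi(\omega,x)$ is $\cL_X^\omega$-harmonic on $\cC_\infty$. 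Because $\omega(e)$ is unbounded in general, $\chi_\xi$ itself need not lie in $L^2(\prob_0)$; instead I would obtain its stationary gradient field by projecting the drift $y \mapsto \omega(0,y)(\xi\cdot y)$ onto the closure of gradient fields in $L^2(\prob_0,\omega)$, in the spirit of Kipnis--Varadhan. Then $M_t^\xi \ldef \xi\cdot X_t - \chi_\xi(\omega,X_t)$ is a $\Prob_0^\omega$-martingale with stationary ergodic square-integrable increments; the martingale CLT together with the ergodic theorem for $\langle M^\xi\rangle_t$ yields that $n^{-1} M_{n^2\cdot}^\xi$ converges to a Brownian motion of variance $\sigma^2(\xi)$, and lattice symmetries of $\prob$ force $\sigma^2(\xi) = \sigma_X^2|\xi|^2$. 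Non-degeneracy $\sigma_X > 0$ follows because the linear function $x \mapsto \xi\cdot x$ is not itself a corrector under a shift-ergodic law unless $\xi=0$.

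The \emph{main obstacle} is to upgrade the martingale CLT for $M^\xi$ to a CLT for $n^{-1}X_{n^2\cdot}$, which requires sublinearity of the corrector along the walk:
\[
\frac{1}{n}\,\chi_\xi\big(\omega, X_{n^2 t}\big) \;\xrightarrow[n\to\infty]{}\; 0 \qquad \Prob_0^\omega\text{-a.s.}
\]
The strategy is in two stages. First, pointwise sublinearity $\chi_\xi(\omega,x)/|x|\to 0$ along $\cC_\infty$ via the spatial ergodic theorem applied to the gradient field. Second, an upgrade to uniform sublinearity on balls, $\max_{x\in \cC_\infty \cap B_n} |\chi_\xi(\omega,x)|/n \to 0$, which relies on the Gaussian heat kernel upper bound $p^\omega_t(x,y) \leq c\, t^{-d/2}$ on the supercritical cluster from Barlow \cite{Ba04} and on the Antal--Pisztora comparison between chemical and Euclidean distance. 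In the truly degenerate regime where conductances may be arbitrarily small or large, additional care is needed to rule out trapping: following \cite{BD10}, one compares $X$ with the walk restricted to the sub-cluster of edges $\{e : \omega(e) \geq \eta\}$, which is uniformly elliptic once $\eta$ is small enough for this sub-cluster still to percolate, and controls the excursions outside it via heat-kernel and hitting-time estimates. Combining sublinearity with the martingale CLT and tightness of $n^{-1}X_{n^2\cdot}$ (itself consequence of the heat kernel upper bound) gives part (i).

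Part (ii) reduces to part (i) by a time change. Writing $\phi(t) \ldef \int_0^t \mu^\omega(X_s)\,\md s$, one has $Y_t = X_{\phi^{-1}(t)}$. The ergodic theorem for the environment process under $\prob_0$ yields
\[
\frac{\phi(n^2 t)}{n^2} \;\xrightarrow[n\to\infty]{}\; t\cdot\mean_0\!\big[\mu^\omega(0)\big] \qquad \Prob_0^\omega\text{-a.s.},
\]
and $\mean_0[\mu^\omega(0)]<\infty$ if and only if $\mean[\omega(e)]<\infty$. In the finite-mean case, inverting gives $\phi^{-1}(n^2 t)/n^2 \to t/\mean_0[\mu^\omega(0)]$, and composing with part (i) shows that $n^{-1}Y_{n^2\cdot}$ converges to a Brownian motion of covariance $(\sigma_X^2/\mean_0[\mu^\omega(0)])\,I$. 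When $\mean[\omega(e)]=\infty$ the same computation forces $\phi^{-1}(n^2 t)/n^2 \to 0$, so $n^{-1}Y_{n^2 t} \to 0$ locally uniformly in $t$, corresponding to the degenerate limit $\sigma_Y^2 = 0$.
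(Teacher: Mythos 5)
Your outline is correct and follows essentially the same route as the cited proof in \cite{ABDH13} (which the paper itself does not reproduce, but whose corrector-plus-sublinearity scheme it details in Section~\ref{sec:proofCLT} for the ergodic case): harmonic coordinates via a Kipnis--Varadhan-type projection, the martingale FCLT, sublinearity of the corrector upgraded from pointwise to uniform on balls via Barlow's heat kernel bounds, comparison with the percolation cluster of edges with $\om(e)\geq\eta$, and a time change for the CSRW. One minor imprecision: that sub-cluster is only bounded \emph{below}, not uniformly elliptic (conductances may still be unbounded above), which is exactly why the core argument is run for the VSRW, for which traps of the second kind are harmless; and your non-degeneracy claim is best justified, as in the literature, by noting that $\si_X=0$ in direction $\xi$ would force $\xi\cdot x=\chi_\xi(\om,x)$ on $\cC_\infty$, contradicting sublinearity.
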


\renewcommand{\thesubfigure}{}
\begin{figure} \label{fig:traps}
	\begin{center}
\subfigure[Trap of the first kind]{ \scalebox{0.3}{   \includegraphics{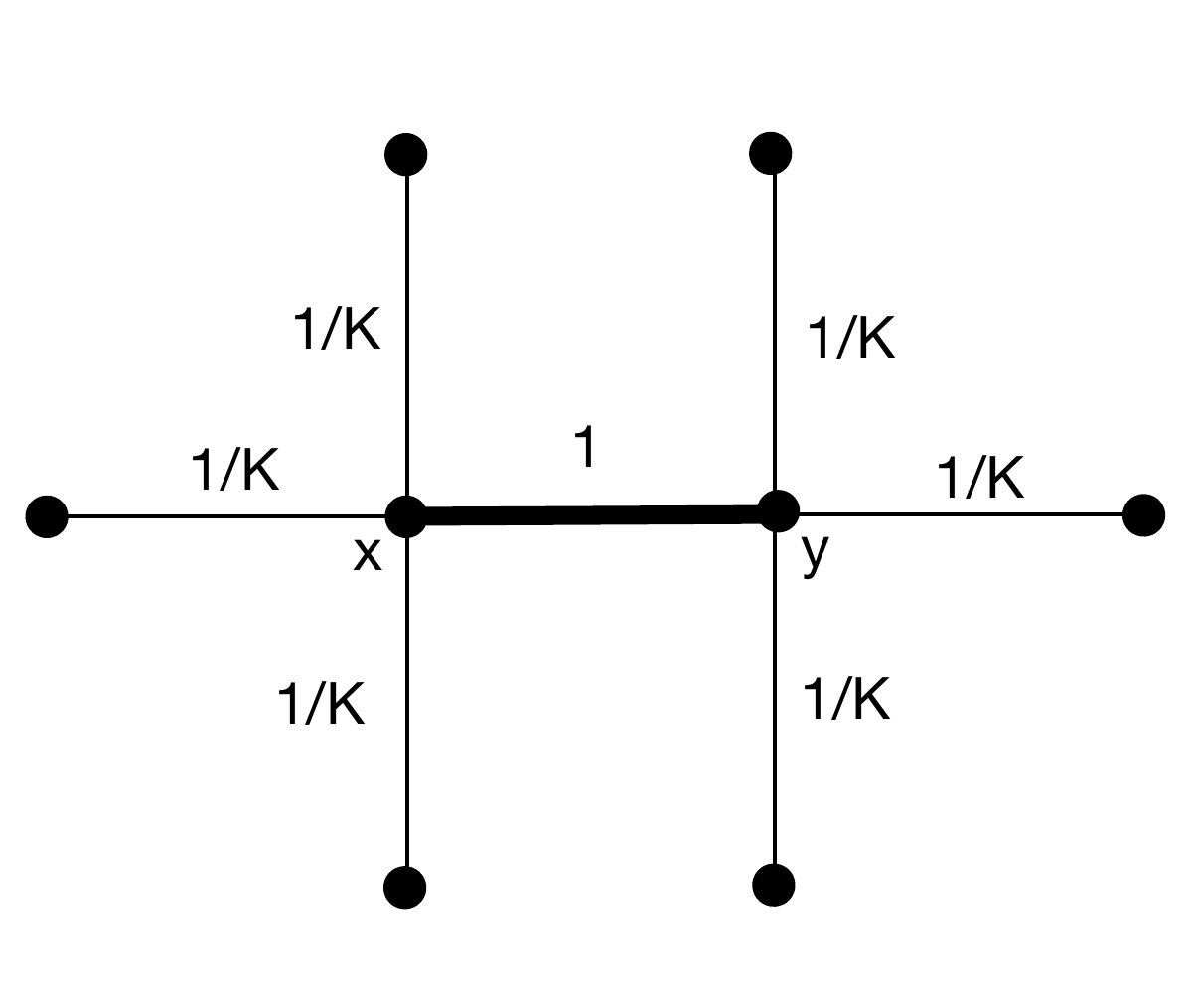} } }
\subfigure[Trap of the second kind]{ \scalebox{0.3}{   \includegraphics{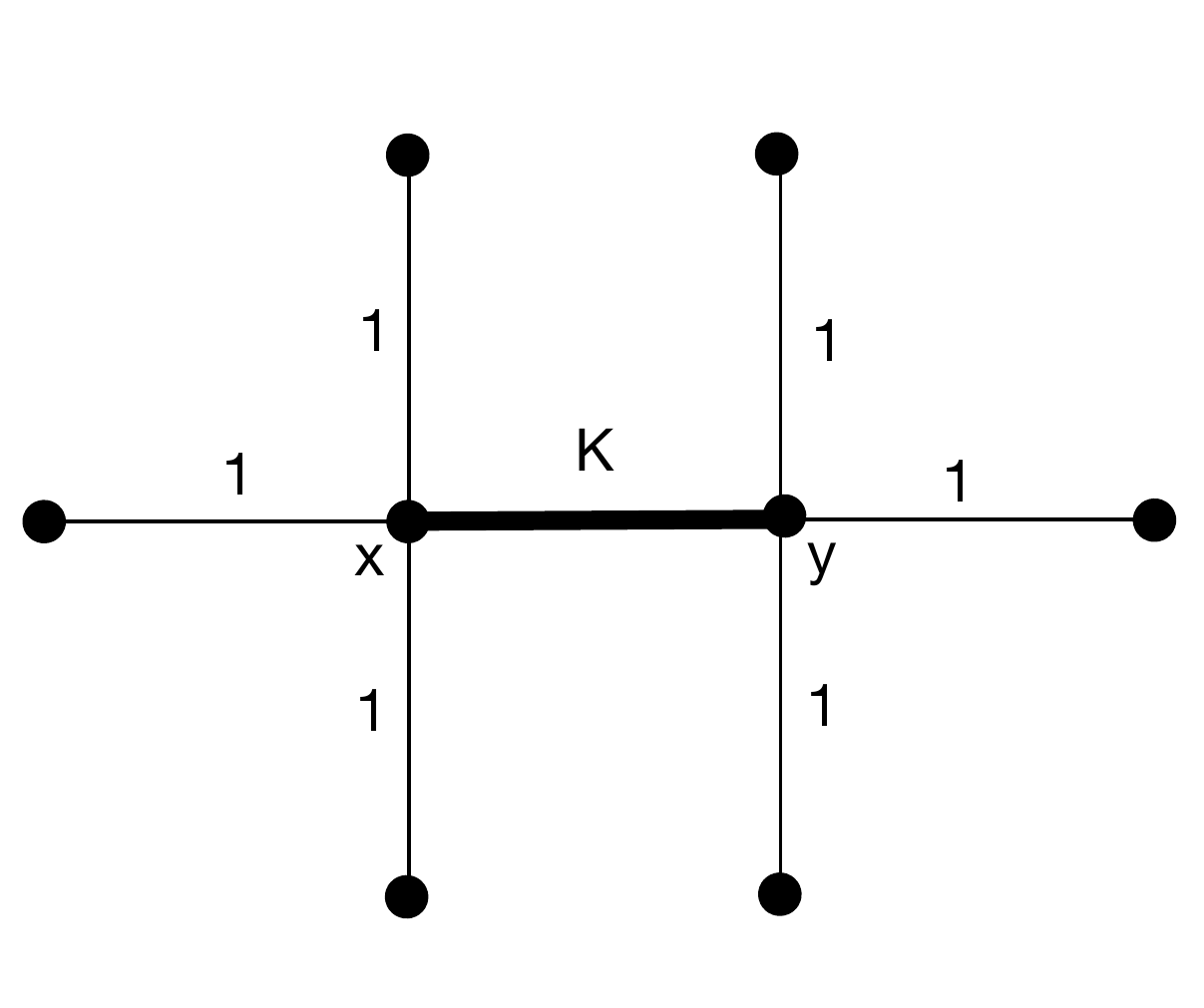} } }
\end{center}
\end{figure}

The main difficulty in studying the general RCM is the possibility of `traps', which may be due to either edges with small positive conductance, or very large conductance. 

\
For the first kind of trap, consider an edge  $\{x,y\}$ with $\om(x,y)=1$, 
separated by edges of conductance $1/K$ with $K\gg 1$ from a path of edges of conductance $1$. Once the walk enters the trap, incurring a cost $1/K$ of probability, it will stay bouncing back and forth for time 
 $O(K)$ with a uniformly positive probability. This kind of trap is capable of capturing both CSRW and VSRW for time  $O(K)$.

The second kind of trap is associated with an edge $\{x,y\}$ with
$\om(x,y) = K \gg1 $, and with $\om(e)=O(1)$ for all other edges $e$ having an endpoint in $\{x,y\}$. 
In this case the CSRW will be trapped in the set
$\{x,y\}$ for time $O(K)$, but the VSRW will not be trapped as its waiting time at each visit to $x$ or $y$ will be $O(1/K)$. This
explains why  in the situation when $\mean[\om(e)] = \infty $, 
the VSRW and CRSW do have quite different long time behaviour. In particular
due to the traps of the second kind the limiting variance of the CSRW  vanishes and it is therefore natural to ask further about the behaviour of the CSRW. 

Indeed in \cite{BC11, Ce11} it has been proved that for i.i.d. nearest neighbour conductances with $\al$-stable upper tail, $\al\in (0,1)$,
i.e.\  $\prob[ \om(e) > t] \sim t^{-\alpha}$, the rescaled process $ ( n^{-\al} X_{n^2t})_{t\geq 0}$ converges to the `fractional kinetic motion' with parameter $\al$, that is a Brownian motion time-changed by the inverse of an independent stable subordinator with index $\alpha$.  On the other hand, in dimension $d=1$ the scaling limit is very different. In fact, for $\alpha\in (0,1)$,  $ ( n^{-1} X_{n^{1+1/\alpha}t})_{t\geq 0}$ converges to 
in distribution, under the annealed law, to the FIN diffusion (for Fontes-Isopi-Newman, see \cite[Definition~1.2]{FIN02}) with parameter $\alpha$, see \cite[Appendix~A]{Ce11}. In any dimension $d\geq 1$, the same processes appear as scaling limits for Bouchaud trap models; we refer to \cite{BC05,Ce06, BACM06, BAC07, BAC08, BCCR15}, also for connections with aging phenomena. For one-dimensional Bouchaud trap models with even heavier tails at infinity,  the trapping mechanism becomes relevant even in the large scale behaviour of the system, so that the model exhibits localization \cite{CM17} and the scaling limit is a spatially-subordinated Brownian motion whose associated clock process is an extremal process \cite{CM15}. Similar results are expected for the RCM.
For one-dimensional RCMs with arbitrarily small conductances having a heavy-tailed distribution at $0$, as well as for the closely related one-dimensional Mott random walk, trapping effects due to the presence of `walls', only occuring in $d=1$, have been studied recently in \cite{CFJ23, CFJ23a, CKS25}; see \cite[Remark~1.9]{CFJ23} for a brief discussion on the connection between those models.

\medskip

The random conductance model has been also studied over other base-graphs than just $\bbZ^d$ or percolation clusters, respectively.  For instance, QFCLTs for random walks under uniformly elliptic random i.i.d.\ conductances on domains with boundary were established in \cite{CCK15}.

Random walks over various point processes in  $\bbR^d$ such as Mott's variable range hopping have been proposed in the physics literature as an effective model describing the phonon-assisted electron transport in disordered solids in the regime of strong Anderson localization. Typically, the conductances of such effective models depend on the temperature, and a main objective is to understand the temperature dependence of the effective conductivity given in terms of the covariance matrix of the limiting Brownian motion. A QFCLT has been shown in \cite{Rou15} for simple random walks  on Delaunay triangulations (the dual of Voronoi tesselations)  generated by point processes in $\mathbb{R}^d$, cf.\ also \cite{Fa23, BG24}, and for certain Mott's variable range hopping \cite{CFP13}.

The methods of Kipnis and Varadhan can be applied even to some deterministic quasiperiodic structures; see e.g.\  \cite{Te10}  for an annealed invariance principle for the simple random walk on Penrose tilings.
Finally, for scaling limits for a class on RCMs on a class of  fractal graphs, see the survey \cite{Cr21} and references therein.

\subsection{Heat kernel behaviour}
 It is well known that in the uniformly elliptic case when the conductances are bounded and bounded away from $0$,  there exist constants $c_1,\ldots, c_4\in (0,\infty)$ such that for all $x,y \in \bbZ^d$ and all $t \geq |x-y|$, 
\begin{align} \label{eq:GB}
  c_1 t^{-d/2} \exp(-c_2|x - y|^2/t) \; \leq \; p^\om(t,x,y) \; \leq \;  c_3 t^{-d/2} \exp(-c_4|x - y|^2/t), 
\end{align}
and similar bounds hold for the heat kernel $q^\om(t,x,y)$ of the CSRW, see \cite{De99}. It is also shown there that in the uniformly elliptic setting such heat kernel bounds are equivalent to a parabolic Harnack inequality.

For random walks on supercritical i.i.d.\ percolation cluster, that is the conductances only take two values $0$ or $1$ with $\prob[\om(e)=1]>p_c$,  for $\prob$-almost all realizations of the conductances, Barlow \cite{Ba04} obtained detailed two-sided Gaussian heat kernel bounds of the form \eqref{eq:GB} for all $x, y$  on the infinite
 cluster and for sufficiently large $t$, extending the on-diagonal heat kernel upper bound shown in \cite{MR04}. Then,  a 
 parabolic Harnack inequality has been shown in \cite{BH09}, leading to a H\"older regularity estimate for space-time harmonic functions, which in combination with the QFCLT allows to derive a  local limit theorem.
 Similar results were obtained in \cite{BD10} for the VSRW under i.i.d.\ conductances uniformly bounded away from zero.  Stronger quantitative homogenization results for heat kernels and Green functions with optimal rates of convergence have been obtained in \cite{DG21} for the VSRW among uniformly elliptic i.i.d.\  conductances on supercritical percolation clusters by using renormalization techniques from quantitative stochastic homogenization, see \cite{AD18} and cf.\  \cite[Chapters 8--9]{AKM19} for details in the case of uniformly elliptic conductances on $\bbZ^d$.

However, due to trapping effects, for i.i.d.\ conductances with values in $[0,1]$ having a sufficiently heavy power-law tail near zero, anomalous heat heat kernel decay occurs. This phenomenon  was first observed
for annealed return probabilities in \cite{FM06}. Anomalous decay of the quenched heat kernel was first obtained in dimension $d\geq 5$ in \cite{BBHK08} with later extensions and additions in \cite{Bo10,Bo10a,BB12,Bo18}.
In this situation the walk gets trapped for a majority of its time in a small spatial region formed of traps of of the first kind, see \cite{BLRV13}.  Sharp conditions on the tails of i.i.d.\ conductances at zero for Harnack inequalities and a local limit theorem to hold were derived in \cite{BKM15}.
 More precisely, assume
 \begin{align*}
  \om(e) \in [0,1], \qquad  \prob[\om(e) \leq u] \;= \; u^\gamma (1+o(1)), \quad  u\rightarrow 0.
 \end{align*}
Then, Gaussian-type near-diagonal heat kernel bounds, a parabolic Harnack inequality and a local limit theorem hold for the CSRW if $q > d/(8d - 4)$ and for the VSRW if $\gamma>1/4$, see \cite[Theorem~1.8 and 1.9]{BKM15}.
We refer to \cite[Remark~1.10]{BKM15} for a discussion on the optimality of these conditions for both CSRW and VSRW.

 \medskip

To summarize, the upshot of the above results and derivations is that with the random conductance models we are finding ourselves in a somewhat unusual situation when the path distribution satisfies a non-degenerate functional CLT and yet the heat kernel decays anomalously in certain cases; i.e., we have a \emph{CLT without local CLT}.

\section{Random conductance models in ergodic environments} \label{sec:ergodic}

\subsection{Quenched invariance principle}
As discussed in the last section, despite the presence of traps, the QFCLT holds for general i.i.d.\ environments, see Theorem~\ref{thm:QFCLTiid}. This is mainly due to the fact that the sizes of the traps correspond to the sizes of the holes in a supercritical percolation cluster, for which one has effective (polylogarithmic) bounds in the i.i.d.\ case, cf.\ e.g.\ \cite[Lemma~3.3]{Ma08}. However, this property fails once we relax the i.i.d.\ assumption and consider more general degenerate ergodic environments, that is for environments whose law admits long range dependencies, as we will do from now on. See Figure~\ref{fig:numsim} for some simulations comparing i.i.d.\ environments and ergodic conductances with short-range dependencies. 

\renewcommand{\thesubfigure}{}
\begin{figure} 
	\begin{center}
\subfigure[(a)]{ \scalebox{0.35}{   \includegraphics{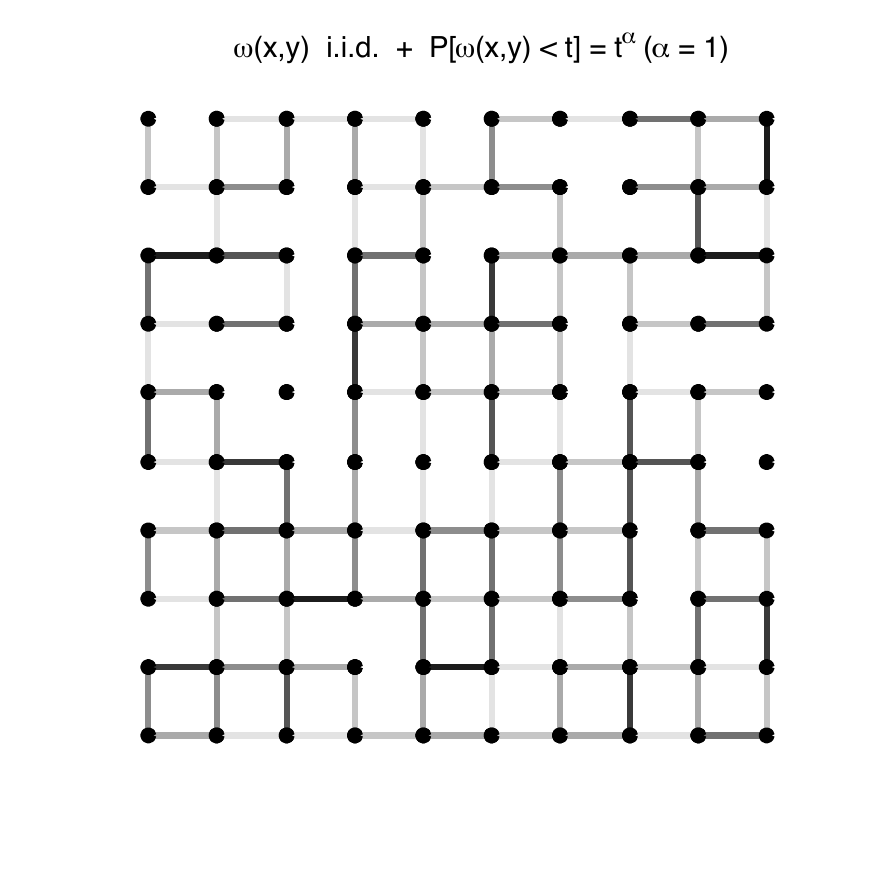} } }
\subfigure[(b)]{ \scalebox{0.35}{   \includegraphics{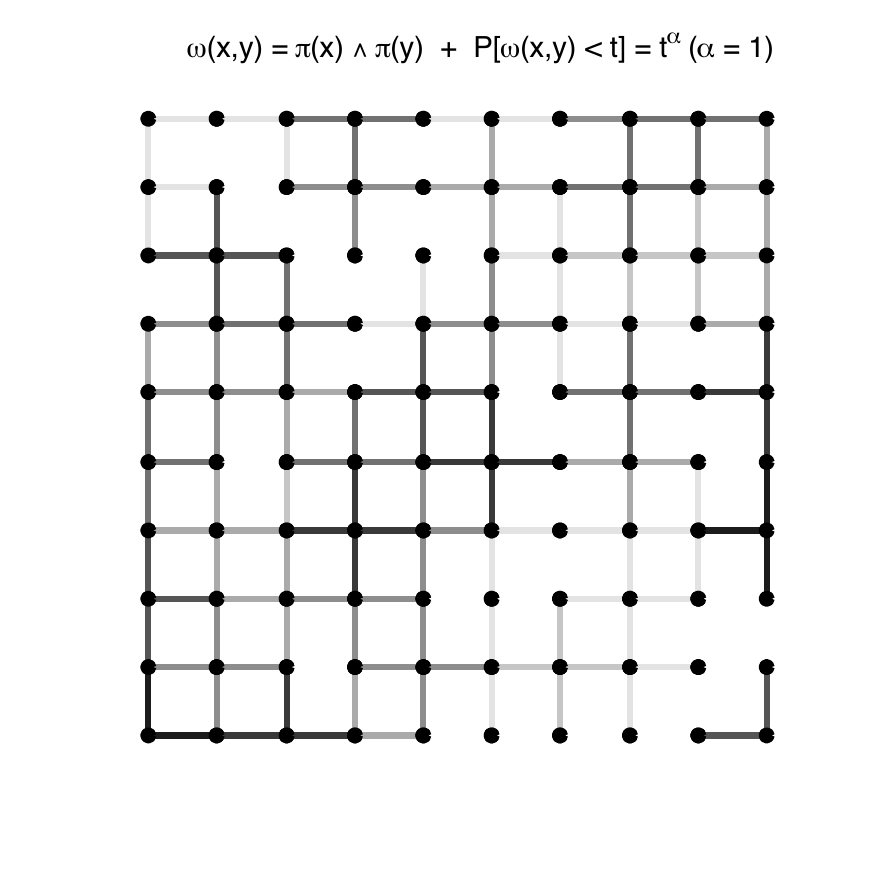} } }
\subfigure[$(c)$]{ \scalebox{0.35}{   \includegraphics{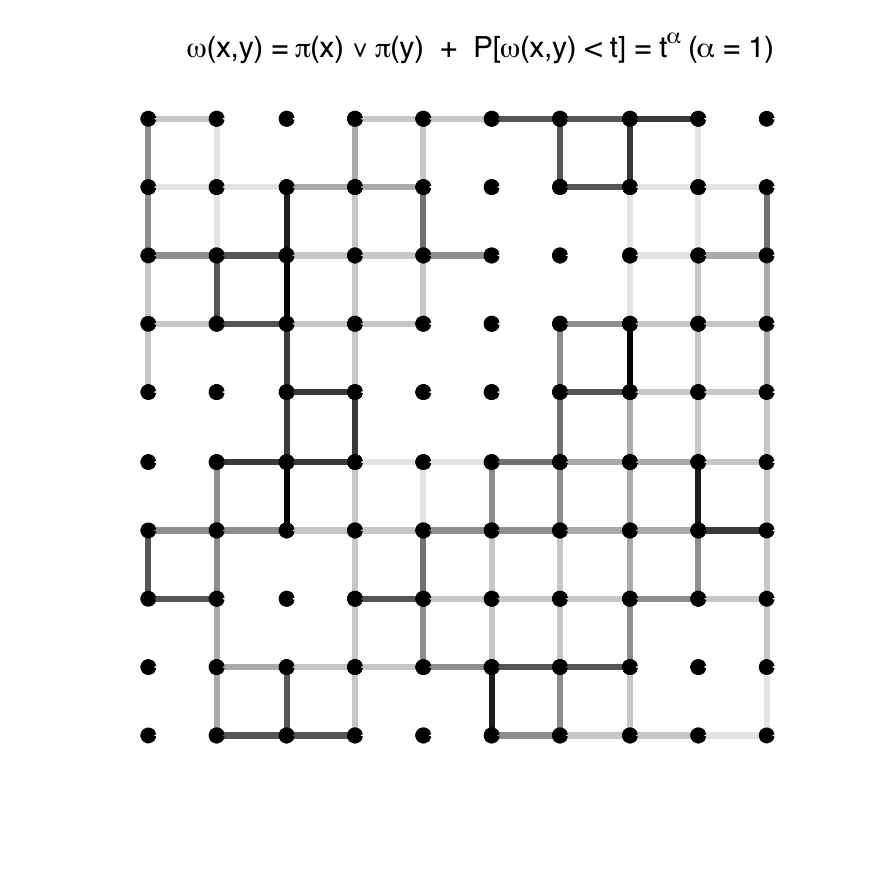} } }	
	\caption[]{Simulations provided by courtesy of Martin Slowik. \\
    (a) $\om(e)\in[0,1]$ i.i.d.\ with  $\mathbb{P}[\om(e) \leq t] \sim t$; \\
  (b) $\om(x,y)=\lambda(x) \vee \lambda(y)$,  $\lambda(x)\in [0,1]$ i.i.d.\ with  $\mathbb{P}[\lambda(x) \leq t] \sim t$;  \\
  (c)  $\om(x,y)=\lambda(x) \wedge \lambda(y)$,  $\lambda(x)\in[0,1]$ i.i.d.\ with  $\mathbb{P}[\lambda(x) \leq t] \sim t$;   }
  \end{center}

\label{fig:numsim}
\end{figure}

\begin{assumption} \label{ass:environment}
 Suppose that $\prob$ satisfies the following conditions.
  \begin{enumerate}[ (i)]
    \item $\prob[0<\om(e)<\infty]=1$ and $\mean[\om(e)]<\infty$ for all $e\in E_d$.
    \item 
  The law $\prob$ is stationary and ergodic w.r.t.\ space shifts of $\mathbb{Z}^{d}$, that is, $\prob \circ \tau_{x}^{-1} = \prob$ for all $x \in \mathbb{Z}^{d}$ and $\prob[A] \in \{0, 1\}$ for any $A \in \mathcal{F}$ such that $\tau_{x}^{-1}(A) = A$ for all $x \in \mathbb{Z}^{d}$. 
  \end{enumerate}
\end{assumption}

In this setting additional upper and lower moment conditions are required to ensure the QFCLT to hold,  which can be expressed in terms of $p,q \in [1,\infty]$ as follows.

\begin{definition} \label{def:Mpq}
For any $p,q \in [1,\infty]$ we say that the moment condition $M(p,q)$ holds if 
\begin{align*}
  \mean\!\big[\om(e)^p\big] \; < \; \infty,
  \qquad
 \mean\!\big[\om(e)^{-q}\big] \; < \; \infty, \quad  \text{for any }e \in E_d.
\end{align*}
\end{definition}

In fact, in \cite{BBT16, BBT15} Barlow, Burdzy and Timar gave an example on $\bbZ^2$ for which the QFCLT fails.  In their model, $M(p,q)$ is assumed for $p, q \in (0,1)$ but $\mean[\om(e)] = \infty$ and $\mean[1/\om(e)] = \infty$.  In \cite{Bi11}, Biskup proved the  QFCLT in dimensions $d = 1,2$ under the moment condition $M(p,q)$ with $p = q = 1$.  It is believed that this the optimal condition for general ergodic environments. Nevertheless, the proof relies on arguments -- inspired by \cite{BB07} for the percolation case -- which cannot be extended to higher dimensions (cf.\ Remark~\ref{rem:sublinear} below). However, in dimensions $d\geq 3$ the QFCLT has been shown to hold under a stronger moment condition.

\begin{theorem}[QFCLT]\label{thm:ip_ergodic}
Let $d \geq 2$. Suppose Assumption~\ref{ass:environment} holds and there exist  $p,q \in (1, \infty]$ with $$ \frac  1  p +  \frac 1 q < \frac 2 {d-1}$$ such that condition $M(p,q)$ is satisfied. 
 Let $X^{(n)} =(n^{-1} X_{n^2t} , t \geq  0)$, $n \in \bbN$. Then, for $\prob$-a.e.\ $\om$, under $\Prob_0^\om$, $X^{(n)}$ converges in
law to a Brownian motion on $\bbR^d$ with a deterministic nondegenerate covariance matrix $\Si_X^2$.
\end{theorem}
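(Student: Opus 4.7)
The plan is to follow the classical corrector method of Kozlov and Papanicolaou--Varadhan, adapted to the degenerate discrete setting along the lines of Andres--Deuschel--Slowik: decompose $X$ as a martingale plus a corrector, prove a martingale invariance principle for the martingale part, and control the corrector by a Moser iteration into which the moment hypothesis enters quantitatively.

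The first step is to construct a corrector $\ch:\bbZ^{d}\times\Om\to\bbR^{d}$ as the orthogonal projection of the coordinate field onto the space of ``potential'' gradient fields inside the $L^{2}(\prob)$-Hilbert space of square-integrable shift-covariant gradient fields. Under Assumption~\ref{ass:environment} and $M(p,q)$ with $p,q\geq 1$, this projection is well defined, and it yields a $\ch$ such that $\Ph(x,\om)\ldef x+\ch(x,\om)$ is $\cL_X^{\om}$-harmonic. Consequently $M_t\ldef\Ph(X_t,\om)$ is a $\Prob_0^{\om}$-martingale with stationary, ergodic, square-integrable increments. The standard martingale FCLT then yields, $\prob$-a.s., convergence of $(n^{-1}M_{n^{2}t})_{t\geq 0}$ under $\Prob_0^{\om}$ to a Brownian motion with deterministic covariance $\Si_X^{2}$; non-degeneracy of $\Si_X^{2}$ follows because $0<\om(e)<\infty$ a.s.\ forces the projection onto potentials to be a strict contraction in every coordinate direction.

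The QFCLT for $X^{(n)}$ then reduces, via the identity $n^{-1}X_{n^{2}t}=n^{-1}M_{n^{2}t}-n^{-1}\ch(X_{n^{2}t},\om)$, to establishing the uniform sublinearity
\[
  \lim_{n\to\infty}\;\max_{|x|\leq n}\, n^{-1}\,|\ch(x,\om)| \;=\; 0 \qquad \prob\text{-a.s.,}
\]
together with tightness of $(X^{(n)})$, for which a Nash-type on-diagonal upper bound on $p^{\om}(t,0,0)$ suffices once the martingale part has been controlled. The weak ``mean'' sublinearity $n^{-d-1}\sum_{|x|\leq n}|\ch(x,\om)|\to 0$ is a direct consequence of Wiener's multidimensional ergodic theorem applied to the shift-covariant cocycle associated with $\ch$.

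The main obstacle, and the place where the balance $\tfrac{1}{p}+\tfrac{1}{q}<\tfrac{2}{d-1}$ is genuinely used, is the passage from mean to maximal sublinearity. The natural route is a discrete Moser iteration for the $\om$-harmonic map $\Ph$ on cubes of side $n$, which requires a weighted Sobolev-type inequality of the form
\[
  \Bigl(\sum_x |u(x)|^{2\kappa}\,\mu^{\om}(x)\Bigr)^{\!1/\kappa}
  \;\leq\; C(\om)\sum_{\{x,y\}} \om(x,y)\,\bigl(u(y)-u(x)\bigr)^{\!2},\qquad \kappa>1,
\]
in which $C(\om)$ is controlled, via H\"older's inequality, by spatial averages of $\om^{p}$ and $\om^{-q}$ -- both $\prob$-a.s.\ bounded by the ergodic theorem. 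A careful bookkeeping of the exponents needed for the Moser scheme to close shows that it converges \emph{precisely} when $\tfrac{1}{p}+\tfrac{1}{q}<\tfrac{2}{d-1}$, yielding an $L^{\infty}$--$L^{1}$ bound on $\ch$ over large cubes that, combined with mean sublinearity, delivers the desired maximal estimate. The same machinery also produces the heat kernel bound used for tightness, and assembling the three ingredients completes the proof of the QFCLT with non-degenerate covariance $\Si_X^{2}$.
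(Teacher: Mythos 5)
Your proposal follows essentially the same route as the paper's proof: corrector construction by orthogonal projection onto potential fields, a Helland-type martingale FCLT for the harmonic coordinates, $\ell^1$-sublinearity of the corrector from ergodic-theorem arguments, and the upgrade to everywhere (maximal) sublinearity via a Moser-iteration maximal inequality, which is exactly where the condition $\tfrac1p+\tfrac1q<\tfrac2{d-1}$ enters. Two details are glossed over but standard: the mean sublinearity is not quite a ``direct'' consequence of the ergodic theorem (the paper routes it through a local $\ell^1$-Poincar\'e inequality applied to bounded approximating gradients $\mD\phi_{j,k}$ of $\chi_j$, plus a dyadic argument), and the supremum over $t\in[0,T]$ is controlled by the everywhere-sublinearity of the corrector combined with the martingale FCLT rather than by a separate Nash-type tightness bound.
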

\begin{proof}
  For $d \geq 3$, see \cite[Theorem~2]{BS20}, and, for $d = 2$, this can be found in \cite{Bi11}. 
\end{proof}
In dimension $d\geq 2$, Theorem~\ref{thm:ip_ergodic} has first been shown in \cite{ADS15} under the slightly stronger moment condition $M(p,q)$ for $p,q$ satisfying $1/p+1/q<2/d$. 
Recently, Biskup \cite{Bi23} initiated the study of homogenization of random walks under deterministic conductances, where an FCLT is shown for random walks in all environments contained in  a set of deterministic conductance configurations carrying all ergodic conductance laws subject to the moment condition in \cite{ADS15}.

\begin{remark} \label{rem:qip}
   (i) If the law $\prob$ of the conductances is also invariant under symmetries of $\bbZ^d$, then the limiting covariance matrix $\Si_X^2$ must be invariant under symmetries as well, so $\Si_X^2$ is of the form $\Si_X^2 = \si_X^2 I$ for some $\sigma_X>0$, cf.\ Theorem~\ref{thm:QFCLTiid}.

  (ii) Given a speed measure $\pi^\om\!: \bbZ^d \to (0,\infty)$ satisfying $\pi^\om(x) = \pi^{\tau_x \om}(0)$ and $0 < \mean[\pi^{\om}(0)] < \infty$, one can also consider the process, $Y = (Y_t\!: t \geq 0)$ on $\bbZ^d$ that is defined by a time change of $X$, that is, $Y_t \ldef X_{a_t}$ for $t \geq 0$, where $a_t \ldef \inf\{s \geq 0 : A_s > t\}$ denotes the right continuous inverse of the functional
  \begin{align*}
    A_t \;=\; \int_0^t \pi^\om(X_s) \, \md s, \qquad t \geq 0.
  \end{align*}
  Its generator is given by
  \begin{align*}
    \cL_Y^{\om} f(x)
    \;=\;
    \sum_{y \in \bbZ^d} \frac{\om(x,y)}{\pi^{\om}(x)} \big(f(y)-f(x)\big).
  \end{align*}
  Then, if $X$ satisfies a QFCLT, the process $Y$ satisfies a QFCLT as well, see \cite[Section 6.2]{ABDH13}.  In this case, the covariance matrix of its limiting Brownian motion is given by $\Si_Y^2 = \mean[\pi^{\om}(0)]^{-1} \Si_X^2$.  A natural choice for the speed measure is $\pi^{\om} = \mu^{\om}$, in which case $Y$ becomes the CSRW.  

  (iii) In \cite{PRS16}, a QFCLT has been proven for simple random walks on a general class of percolation models with long-range correlations on $\bbZ^d$, $d\geq 2$, introduced in \cite{DRS14}, including the level sets of discrete Gaussian free fields or random interlacements and their vacant set. For random conductance models on this class of random graphs, a QFCLT has been shown in \cite{DNS18} for general ergodic conductances satisfying a moment condition similar to $M(p,q)$ with $p,q\geq 1$ such that $1/p+1/q<2/d$.
\end{remark}

\subsection{A detailed overview of the proof} \label{sec:proofCLT}
In this section we will provide a detailed outline of the proof of Theorem~\ref{thm:ip_ergodic} following in parts the strategy in \cite{ADS15,ACDS18, BS20}, based on a combination of the corrector method and Moser iteration schemes. 

First, we recall that the Markov process $X$ on $\bbZ^d$ naturally induces a corresponding Markov process $(\tau_{X_t} \om : t \geq 0)$ on the space of random environments to which we also refer to as the process of the \emph{environment as seen from the particle}.  By \cite[Lemma~2.4]{ADS15}, this Markov process is stationary and ergodic with respect to the environment measure $\prob$ on $(\Om, \cF)$.

  The proof of the invariance principle is based on the well-established corrector approach due to Kozlov \cite{Ko85}. More precisely, we introduce harmonic coordinates and construct a corrector $\chi\!: \Om \times \bbR^d \to \bbR^d$ such that $\Phi(\om, x) = x - \chi(\om, x)$ is an $\cL_X^\om$-harmonic function, that is, for $\prob$-a.e.\ $\om$ and every $x \in \bbZ^d$,
\begin{align} \label{eq:Phi}
  \bigl(\cL_X^{\om} \Phi(\om, \cdot)\bigr)(x)
  \;=\;  
  \sum_{y \sim x} \om(x,y)\, \bigl(\Phi(\om, y) - \Phi(\om, x) \bigr)
  \;=\;
  0.
\end{align}
Moreover, the corrector $\chi$ needs to be shift invariant in the sense that it satisfies a cocycle property.

\begin{definition}
  A measurable function, also called random field, $\Psi\!: \Om \times \bbR^d \to \bbR$ satisfies the \emph{cocycle property} if for $\prob$-a.e.\ $\om$, 
  \begin{align} \label{eq:cocycle} 
    \Psi(\tau_x \om,y-x)
    \;=\;
    \Psi(\om,y) \,-\, \Psi(\om,x),
    \qquad \text{for all } x, y \in \bbZ^d.
  \end{align}
  We denote by $L^2_{\mathrm{cov}} $ the set of functions $\Psi\!: \Om \times \bbR^d \to \bbR$ satisfying the cocycle property such that
  \begin{align*}
    \Norm{\Psi}{L^2_{\mathrm{cov}}}^2
    \;\ldef\;
    \mean\!\Big[{\textstyle \sum_{x \sim 0 }}\; \om(0,x) \, \Psi^2(\om, x) \Big] 
    \;<\; \infty.
  \end{align*}
\end{definition}
It can easily be checked that $L_{\mathrm{cov}}^2$ is a Hilbert space. 
 We say a function $\phi\!: \Om \to \bbR$ is \emph{local} if its values only depend on a finite number of edges in a configuration $\om \in \Om$.   We associate to $\phi$ a (horizontal) gradient $\mD \phi\!: \Om \times \bbR^d \to \bbR$ defined by
\begin{align*}
  \mD \phi (\om,x)
  \;=\;
  \phi(\tau_x \om) - \phi(\om),
  \qquad x \in \bbR^d.
\end{align*}
Obviously, if the function $\phi$ is bounded, $\mD \phi$ is an element of $L_{\mathrm{cov}}^2$. Following \cite{MP07}, we introduce an orthogonal decomposition of the space $L_{\mathrm{cov}}^2$.  Set
\begin{align*}
  L_{\mathrm{pot}}^2
  \;=\;
  \mathop{\mathrm{cl}}
  \big\{ \mD \phi \mid \phi\!: \Om \to \bbR\; \text{ local} \big\}
  \;\text{ in }\;  L_{\mathrm{cov}}^2,
\end{align*}
being the subspace of ''potential'' random fields and let $L_{\mathrm{sol}}^2$ be the orthogonal complement of $L_{\mathrm{pot}}^2$ in $L_{\mathrm{cov}}^2$ called ''solenoidal'' random fields.

In order to define the corrector, we introduce the \emph{position field} $\Pi\!: \Om \times \bbR^d \to \bbR^d$ with $\Pi(\om,x) = x$.  We  write $\Pi_j$ for the $j$-th coordinate of $\Pi$.  Since $ \Pi_j(\om, y-x) = \Pi_j(\om, y) -\Pi_j(\om, x)$, $\Pi_j$ satisfies the cocycle property.  Moreover,
\begin{align}
  \Norm{\Pi_j}{L_{\mathrm{cov}}^2}^2 
  \;=\;
  \mean \Big[{\textstyle \sum_{x\sim 0}}\; \om(0,x) |x_j|^2\Big]
  \; \leq \;
  \mean[ \mu^\om(0)]
  \;<\;
  \infty,
\end{align}
where $x_j$ denotes the $j$-th coordinate of $x\in \bbZ^d$.  Hence, $\Pi_j \in L_{\mathrm{cov}}^2$.  Thus, we can define $\chi_j \in L_{\mathrm{pot}}^2$ and $\Phi_j \in L_{\mathrm{sol}}^2$ by the property
\begin{align*}
  \Pi_j
  \;=\;
  \chi_j \,+\, \Phi_j
  \;\in\;
  L_{\mathrm{pot}}^2 \oplus L_{\mathrm{sol}}^2.
\end{align*}
This gives a definition of the corrector $\chi = (\chi_1, \dots, \chi_d) : \Om \times \bbR^d \to \bbR^d$.  We set
\begin{align}\label{eq:def:M}
  M_t \;\ldef \; \Phi(\om, X_t) \;=\; X_t - \chi(\om, X_t).
\end{align}
The following proposition summarizes the properties of the corrector.
\begin{prop}\label{prop:Phi+chi}
  Let $d \geq 2$ and suppose that Assumptions~\ref{ass:environment} holds.  Then, for $\prob$-a.e.\ $\om$, there exist a function $\chi\!: \Om \times \bbR^d \to \bbR^d$, called the \emph{corrector}, which is characterised by the following properties.
  \begin{enumerate}[(i)]
    \item \emph{Normalisation.} For $\bbP$-a.e.\ $\om$, $\chi(\om, 0) = 0$.

    \item \emph{Stationarity of increments under space shifts.} For $\bbP$-a.e.\ $\om$, $\chi$ satisfies the cocycle property. 
    \item \emph{Weighted square-integrability.} $\chi_j \in   L_{\mathrm{pot}}^2$ for every $j=1,\ldots d$. In particular, $\mean\Bigl[ \sum_{x \sim 0} \om(0,x)\, \bigr| \chi(\om, x) \bigr|^2 \Bigr] < \infty$.

    \item \emph{Harmonic coordinates.} The function $\Phi(\om, x) \ldef x - \chi(\om, x)$ is harmonic in the sense that, for $\prob$-a.e.\ $\om$, $\bigl(\cL_X^{\om}\Phi(\om, \cdot )\bigr)(x)=0$ for all $x \in \bbZ^d$.  In particular, the stochastic process, $(M_t : t \geq 0)$, defined by \eqref{eq:def:M}, is a $\Prob_{x_0}^{\om}$-martingale for any $x_0\in \bbZ^d$.
  \end{enumerate}
\end{prop}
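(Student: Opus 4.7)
The plan is to construct the corrector directly via the orthogonal decomposition already set up in the text, namely $\Pi_j = \chi_j + \Phi_j \in L^2_{\mathrm{pot}} \oplus L^2_{\mathrm{sol}}$, and then read off the four properties. Since $\mean[\mu^\om(0)] < \infty$ by Assumption~\ref{ass:environment}, the computation in the text shows $\Pi_j \in L^2_{\mathrm{cov}}$, so the projection is well defined. Property (iii) is then immediate from $\chi_j \in L^2_{\mathrm{pot}} \subset L^2_{\mathrm{cov}}$, and property (ii) holds because every element of $L^2_{\mathrm{cov}}$ satisfies the cocycle property by definition. For (i), set $x=y$ in the cocycle relation to get $\chi_j(\tau_x \om, 0) = 0$ for $\prob$-a.e.\ $\om$ and all $x$, and then use stationarity of $\prob$ under space shifts to conclude $\chi_j(\om, 0) = 0$ almost surely.

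The crux is (iv), which amounts to showing that every $\Psi \in L^2_{\mathrm{sol}}$ defines an $\cL_X^\om$-harmonic function at the origin, and hence everywhere by stationarity. The key identity I would establish is
\begin{align*}
  \scprL{\Psi}{\mD \phi} \;=\; -2\, \mean\!\left[ \phi(\om)\, \sum_{x \sim 0} \om(0,x)\, \Psi(\om, x) \right]
\end{align*}
for every bounded local $\phi$, where $\scprL{\cdot}{\cdot}$ denotes the $L^2_{\mathrm{cov}}$ inner product. To prove this, expand $\mD\phi(\om,x) = \phi(\tau_x\om) - \phi(\om)$, and in the term involving $\phi(\tau_x\om)$ perform the change of variables $\om \mapsto \tau_{-x}\om$ (using stationarity of $\prob$), together with the consequences $(\tau_{-x}\om)(0,x) = \om(0,-x)$ of the shift action and $\Psi(\tau_{-x}\om, x) = -\Psi(\om, -x)$ of the cocycle property (normalized by $\Psi(\om,0) = 0$). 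Reindexing $x \mapsto -x$ in the resulting sum reproduces the second term with the opposite sign, giving the factor of $-2$. Since $\Phi_j = \Pi_j - \chi_j \in L^2_{\mathrm{sol}}$, the identity forces $\mean[\phi(\om)(\cL_X^\om \Phi_j(\om,\cdot))(0)] = 0$ for all bounded local $\phi$, so $(\cL_X^\om \Phi_j(\om,\cdot))(0) = 0$ almost surely. Using the cocycle property of $\Phi_j$ and shifting by $\tau_x$ then extends this to all $x \in \bbZ^d$, proving \eqref{eq:Phi}.

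The martingale statement in (iv) is then a standard consequence: by \eqref{eq:Phi}, for each $j$ the process $\Phi_j(\om, X_t)$ has vanishing infinitesimal drift under $\Prob_{x_0}^\om$, and its square-integrability up to any finite time follows from the $L^2_{\mathrm{cov}}$-bound on $\chi_j$ together with integrability of $\mu^\om$ along the trajectory (which is controlled by the ergodic theorem). Uniqueness of $\chi$ subject to (i)--(iii) comes automatically from uniqueness of orthogonal projection in the Hilbert space $L^2_{\mathrm{cov}}$.

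I expect the main obstacle to be the inner-product identity linking $L^2_{\mathrm{sol}}$ with pointwise harmonicity: the manipulation mixes the space shift $\tau_x$ acting on $\om$, the cocycle property acting on the spatial variable of $\Psi$, and the symmetry $\om(0,x) = \om(x,0)$, and one has to keep careful track of signs and of the summability needed to swap expectations and sums (justified by $\mean[\mu^\om(0)] < \infty$ and Cauchy--Schwarz in $L^2_{\mathrm{cov}}$). Once this computation is in place, the rest of the proof is a routine check.
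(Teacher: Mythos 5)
Your construction and verification are correct and coincide with the standard argument that the paper itself sets up (the orthogonal decomposition $\Pi_j=\chi_j+\Phi_j$ in $L^2_{\mathrm{pot}}\oplus L^2_{\mathrm{sol}}$) and then delegates to the references \cite{Bi11,ABDH13,BD10}; in particular, the integration-by-parts identity $\langle \Psi, \mD\phi\rangle = -2\,\mean\bigl[\phi(\om)\sum_{x\sim 0}\om(0,x)\Psi(\om,x)\bigr]$, obtained exactly by your shift $\om\mapsto\tau_{-x}\om$ and the cocycle relation $\Psi(\tau_{-x}\om,x)=-\Psi(\om,-x)$, is precisely how solenoidal fields are shown to be harmonic there. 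The only cosmetic slip is the closing remark that (i)--(iii) characterise $\chi$: any element of $L^2_{\mathrm{pot}}$ satisfies (i)--(iii), so uniqueness also needs (iv), which your identity supplies since it shows that a cocycle harmonic at the origin is orthogonal to $L^2_{\mathrm{pot}}$ and hence that harmonicity of $\Pi_j-\chi_j$ pins down $\chi_j$ as the orthogonal projection.
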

\begin{proof}
  This is widely considered as classical material in homogenisation theory for the RCM.  We refer to, e.g., \cite{Bi11,ABDH13, BD10} for a more detailed exposition and proofs.
\end{proof}

The $\cL_X^{\om}$-harmonicity of $\Phi$ implies that $  M_t = X_t - \chi(\om, X_t)$ is a martingale under $\Prob_{0}^{\om}$ for $\prob$-a.e.\ $\om$, and a QFCLT for the martingale part $M$ can be easily shown by standard arguments, see Proposition~\ref{prop:mconv} below.  We thus get a QFCLT for $X$ once we verify that, for any $T>0$ and $\prob$-a.e.\ $\om$,
\begin{align} \label{eq:conv_corrProb}
  \sup_{t\in [0,T]}\frac{1}{n}\, \bigl| \chi(\om, X_{n^2 t}) \bigr| 
  \underset{n \to 0}{\;\longrightarrow\;} 
  0 
  \qquad \text{in } \Prob_{0}^{\om}\text{-probability}. 
\end{align}

We start by establishing the invariance principle for the martingale part.
\begin{prop}\label{prop:mconv}
  Suppose Assumption \ref{ass:environment} holds, and assume that $\mean\bigl[ 1/\om(e) \bigr] < \infty$ for any $e\in E_d$.  For $n \in \bbN$, we write $M^{(n)} = \bigl(\frac{1}{n} M_{n^2 t} : t \geq 0\bigr)$ to denote the diffusively rescaled martingale.  Then, $\prob$-a.s., $M^{n}$ converges in law in the Skorohod topology to a Brownian motion with a non-degenerate covariance matrix $\Si^2_X$ given by
  \begin{align*}
    \Si_{X}^2(i, j)
    \;=\; 
    \mean
    \Bigl[
      {\textstyle \sum_{x \in \bbZ^d}}\, \om(0, x)\, 
      \bigl( e_i \cdot \Phi(\om, x) \bigr)\, 
      \bigl( e_j \cdot \Phi(\om, x) \bigr)
    \Bigr].
  \end{align*}
\end{prop}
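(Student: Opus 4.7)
My plan is to apply a standard martingale functional central limit theorem for locally square-integrable c\`adl\`ag martingales (in the spirit of Helland or Whitt). Such a theorem requires two ingredients: convergence in probability of the predictable bracket $\langle M^{(n)}, M^{(n)}\rangle_t$ to a deterministic matrix linear in $t$, and a Lindeberg-type condition on the jumps of $M^{(n)}$. Both will be obtained via Birkhoff's ergodic theorem applied to the process of the \emph{environment as seen from the particle} $(\tau_{X_t}\om : t \geq 0)$, which, for the VSRW, is stationary and ergodic with respect to $\prob$ on $(\Om, \cF)$.

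For the bracket, I would use that $\Phi$ is $\cL_X^\om$-harmonic (Proposition~\ref{prop:Phi+chi}(iv)), so by the carr\'e du champ formula together with the cocycle property of $\Phi$,
\[
\langle M^{(i)}, M^{(j)}\rangle_t \;=\; \int_0^t F_{ij}(\tau_{X_s}\om)\,\md s, \qquad F_{ij}(\om) \;\ldef\; \sum_{z \sim 0} \om(0,z)\,\Phi_i(\om, z)\,\Phi_j(\om, z).
\]
The weighted square-integrability in Proposition~\ref{prop:Phi+chi}(iii), together with $\Pi_j \in L^2_{\mathrm{cov}}$, gives $F_{ij} \in L^1(\prob)$ with $\mean[F_{ij}] = \Si_X^2(i,j)$; Birkhoff's theorem for the environment process then yields, for $\prob$-a.e.\ $\om$, that $n^{-2}\langle M^{(i)}, M^{(j)}\rangle_{n^2 t}$ converges to $t\,\Si_X^2(i,j)$ in $\Prob_0^\om$-probability.

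The jumps of $M$ have the form $\Phi(\tau_{X_{s-}}\om, e)$ for a nearest-neighbour unit vector $e$, so the sum of squared $\ve$-large jumps of $M^{(n)}$ up to time $t$ is dominated by $n^{-2}\int_0^{n^2 t} G_{n,\ve}(\tau_{X_s}\om)\,\md s$ with
\[
G_{n,\ve}(\om) \;\ldef\; \sum_{z \sim 0} \om(0, z)\,|\Phi(\om, z)|^2\,\indicator_{\{|\Phi(\om, z)| > \ve n\}}.
\]
Since $G_{n,\ve}$ is dominated by $\sum_i F_{ii} \in L^1(\prob)$ and decreases pointwise to $0$ as $n \to \infty$, a combination of the ergodic theorem and dominated convergence yields the required Lindeberg control. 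Together with the bracket convergence, this gives $M^{(n)} \Rightarrow$ Brownian motion of covariance $\Si_X^2$ under $\Prob_0^\om$, for $\prob$-a.e.\ $\om$.

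It remains to verify non-degeneracy of $\Si_X^2$. From the orthogonal decomposition $\Pi = \chi + \Phi$ in $L^2_{\mathrm{pot}} \oplus L^2_{\mathrm{sol}}$ one extracts, for every $\xi \in \bbR^d$, the variational identity
\[
\xi \cdot \Si_X^2\, \xi \;=\; \|\xi \cdot \Phi\|_{L^2_{\mathrm{cov}}}^2 \;=\; \inf_{\ph}\ \mean\!\Big[\sum_{z \sim 0}\om(0,z)\,\bigl(\xi \cdot z - \mD\ph(\om, z)\bigr)^{\!2}\Big],
\]
the infimum being taken over bounded local $\ph : \Om \to \bbR$. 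A weighted Cauchy-Schwarz inequality applied along straight coordinate paths in the direction $\xi$, combined with stationarity of $\mD\ph$, gives the reciprocal lower bound $\xi \cdot \Si_X^2 \xi \geq c |\xi|^2 / \mean[1/\om(e)]$. This is the only point where the hypothesis $\mean[1/\om(e)] < \infty$ is actually used, and I expect it to be the main obstacle in the proof: the martingale CLT inputs follow almost automatically once one has $\Phi \in L^2_{\mathrm{cov}}$, whereas a quantitative lower bound on the homogenised diffusivity requires a genuinely new ingredient beyond the Hilbert-space construction of $\chi$.
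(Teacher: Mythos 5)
Your proposal follows essentially the same route as the paper: Helland-type martingale FCLT, with the bracket and the jump condition both reduced to ergodic averages along the environment seen from the particle, and the $L\to\infty$ truncation to handle the $n$-dependence of the indicator. One step is stated loosely: the sum of squared $\ve$-large jumps of $M^{(n)}$ is \emph{not} pathwise dominated by $n^{-2}\int_0^{n^2t}G_{n,\ve}(\tau_{X_s}\om)\,\md s$; that integral is its \emph{compensator} (computed via the L\'evy system formula), and it is the compensator that Helland's condition asks you to kill — this is precisely the point the paper flags as having been treated inaccurately in earlier references, so you should phrase it as such (or invoke Lenglart to pass from compensator to process). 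Your non-degeneracy argument via the variational characterisation of $\|\xi\cdot\Phi\|_{L^2_{\mathrm{cov}}}^2$ and weighted Cauchy--Schwarz using $\mean[1/\om(e)]<\infty$ is correct and is exactly the content of the result the paper cites rather than proves; contrary to your closing remark, it is the short part of the argument, not the main obstacle.
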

\begin{proof}
  The proof is based on the martingale convergence theorem by Helland; see e.g.\ \cite{ABDH13} or \cite{MP07}.  However, since in \cite{ABDH13, MP07} the actual application of the result in \cite{He82} is slightly inaccurate we give details here.
  
  First, by the Cramer-Wold theorem, it suffices to show that, for every $v \in \bbR^d$ and $\prob$-a.e.\ $\om$, the diffusively rescaled stochastic processes, $v \cdot M^{(n)} = (v \cdot M_{t}^{(n)} : t \geq 0)$, converges as $n$ tends to infinity weakly to a real-valued Brownian motion with variance  $v \cdot \Si_X^2 v$.  Notice that, for any $v \in \bbR^d$, $n \in \bbN$ and $\prob$-a.e.\ $\om$, $v \cdot M^{(n)}$ is a $\Prob_{0}^{\om}$-martingale, and its predictable quadratic variation process $(\langle v \cdot M^{(n)} \rangle_t : t \geq 0)$ is given by
  \begin{align}\label{eq:M_quad_var}
    \bigl\langle v \cdot M^{(n)} \bigr\rangle_t
    \;=\;
    \frac{1}{n^2}
    \int_{0}^{t n^2} 
      \sum_{y \in \bbZ^d} \om(0, y)
      \bigl(v \cdot \Phi(\om, y)\bigr)^2 \circ \tau_{X_s}\,
    \md s,
  \end{align} 
  cf.\ e.g.\ \cite[Proposition~5.11]{ABDH13}.  In order to establish a quenched invariance principle for $v \cdot M^{(n)}$ consider, for any $\de > 0$ and $n \geq 1$, the stochastic process $(N_{t}^{n, \de} : t \geq 0)$  defined by
  \begin{align*}
     N_{t}^{\de, n} 
    &\;\ldef\; 
    \sum_{0 \leq s \leq t} 
    \Bigl( v \cdot M_{s}^{n} - v \cdot M_{s\mm{-}}^{n} \Bigr)^{\!2}\, 
    \indicator_{
      {\{ |v \cdot M_{s}^{n}} - v \cdot M_{s-}^{n}|\geq \de \}
    },
  \end{align*}
  and denote by $( A_{t}^{\de, n} : t \geq 0)$ its compensator.  The latter is defined as the unique predictable process such that $ N^{\de, n} -  A^{\de, n}$ is a local $\Prob_{0}^{\om}$-martingale for $\prob$-a.e.\ $\om$.  For the purpose of determining $A^{\de, n}$, recall that the L\'{e}vy system theorem, see e.g.\ \cite[Theorem VI.28.1]{RW00}, states that, for any function $f\!: \bbZ^{d} \times \bbZ^{d} \to \bbR$ that vanishes on the diagonal, the stochastic process
  \begin{align*}
    \sum_{0 \leq s \leq t} f( X_{s\mm{-}}, X_{s})
    \,-\,
    \int_{(0, t]}
      \sum_{y \in \bbZ^{d}}
      \om(X_{s\mm{-}}, y)
      f(X_{s\mm{-}}, y)\,
    \md s
  \end{align*}
  is a local $\Prob_{0}^{\om}$-martingale for $\prob$-a.e.\ $\om$.  Recalling that $v\cdot \Phi$ satisfies the cocycle property (cf.\ \eqref{eq:cocycle}), we choose $f_{\de, n}(x, y) \ldef \bigl( v \cdot \Phi(\tau_{x} \om, y-x) \bigr)^{2} \indicator_{\{ | v \cdot \Phi(\tau_{x} \om, y-x)| > \de n \}}$ to find that the compensator of the stochastic process $N^{\de, n}$ is given by
  \begin{align}
    A_{t}^{\de, n}
    &\;=\;
    \frac{1}{n^{2}}
    \int_{(0, t n^{2}]}
      \sum_{y \in \bbZ^{d}}
      \om(X_{s\mm{-}}, y)
      f_{\de, n}(X_{s\mm{-}}, y)\,
    \md s
    \nonumber\\[.5ex]
    &\;=\;
    \frac{1}{n^{2}}
    \int_{0}^{t n^{2}}
      \sum_{y \in \bbZ^{d}}
      \om(0, y)
      \bigl( v \cdot \Phi(\om, y)\bigr)^{2}\,
      \indicator_{\{ |v \cdot \Phi(\om, y)| > \de n \}}
      \circ \tau_{X_{s}}\,
    \md s.
    \label{eq:compensator}  
  \end{align}

  Now, by Helland's martingale convergence theorem \cite[Theorem~5.1-(a)]{He82}, see also Brown \cite{Br71}, the sequence of processes, $(v \cdot M^{(n)})_{n \in \bbN}$, converges in law in the Skorohod topology to a Brownian motion with variance  $v\cdot \Sigma^2_X v$ if the following two conditions hold.  For any $t > 0$, $\de > 0$ and $\prob$-a.e.\ $\om$,
  \begin{enumerate}[(i)]
    \item $\langle v \cdot M^{(n)} \rangle_t$ converges to $t \bigl( v \cdot \Si_{X}^{2} v \bigr)$ in $\Prob_{0}^{\om}$-probability as $n \to \infty$.

    \item $A_t^{\de, n}$ converges to zero in $\Prob_{0}^{\om}$-probability as $n \to \infty$.
  \end{enumerate}
 
  For (i), in view of \eqref{eq:M_quad_var}, an application of the (time) ergodic theorem, applied on the environment process $( \tau_{X_{t}} \om : t \geq 0)$, yields that, $\prob \times \Prob_{0}^{\om}$-a.s.,
  \begin{align*}
    \lim_{n \to \infty} \langle v \cdot M^{n}\rangle_t
    \;=\;
    t \sum_{y \in \bbZ^{d}}
    \mean\Big[
      \om(0,y) \,
      \bigl(v \cdot \Phi(\om, y) \bigr)^2
    \Big]
    \;=\;
    t\, \bigl( v \cdot \Si_{X}^{2}v \bigr)
  \end{align*}
  with $\Si_{X}^{2}$ as in the statement, so that condition (i) follows.

To show condition (ii), fix some $L \in (0, \infty)$.  Then, by using again the (time) ergodic theorem on the environment process $(\tau_{X_{t}} \om : t \geq 0)$, it follows from the representation in \eqref{eq:compensator} that, $\prob \times \Prob_{0}^{\om}$-a.s.,
  \begin{align*}
    \limsup_{n \to \infty} A_{t}^{\de, n}
    &\;\leq\;
    \limsup_{n \to \infty}
    \frac{1}{n^{2}}
    \int_{0}^{t n^{2}}
       \sum_{y \in \bbZ^{d}}
      \om(0, y)
      \bigl( v \cdot \Phi(\om, y)\bigr)^{2}\,
      \indicator_{\{ |v \cdot \Phi(\om, y)| > L \}}
      \circ \tau_{X_{s}}\,
    \md s
    \\[.5ex]
    &\;=\;
    t \sum_{y \in \bbZ^{d}}
    \mean\Big[
      \om(0,y) \,
      \bigl(v \cdot \Phi(\om, y) \bigr)^2\,
      \indicator_{\{ |v \cdot \Phi(\om, y)| > L \}}
    \Big].
  \end{align*}
  Thus, we obtain that, for any $\de > 0$ and $t \geq 0$,
  \begin{align*}
    \limsup_{L \to \infty} \limsup_{n \to \infty} A_{t}^{\de, n}
    \;=\;
    0,
    \qquad \prob \times \Prob_{0}^{\om}\text{-a.s.}, 
  \end{align*}
  which implies (ii) and completes the proof of the QFCLT for the martingale $M$.

  Finally, we refer to \cite[Proposition~4.1]{Bi11} for a proof that the limiting covariance matrix $\Si_{X}^{2}$ is non-degenerate.
\end{proof}

We now turn to the proof of \eqref{eq:conv_corrProb}. The most common approach to show \eqref{eq:conv_corrProb} and to deduce the QFCLT is to establish the sublinearity of the corrector. 
In a first step we show $\ell^1$-sublinearity of the corrector. Note that this only requires the weaker moment condition $M(1,1)$ to hold. 

From now on, for any $z_0 \in \bbZ^d$ and $r \geq 0$, we denote by $B(z_0, r) \ldef \{y \in \bbZ^d : |z_0-y| \leq r\}$ the closed ball with centre $z_0$ and radius $r$ with respect to the natural graph distance on $\bbZ^d$. For abbreviation, set $B_r\ldef B(0,r)$. Finally, for any $A \subset \bbZ^d$ we write $|A|$ for the number of elements in $A$.

\begin{prop} [$\ell^1$-sublinearity]\label{prop:l1_sublin}
  Suppose Assumption \ref{ass:environment} holds, and assume that $\mean\bigl[ 1/\om(e) \bigr] < \infty$ for any $e\in E_d$. Then
  \begin{align*}
    \lim_{n\to\infty} \frac 1 {|B_n|} \sum_{x\in B_n} \frac {|\chi_j(\om,x)|} n=0, \qquad  \text{$\prob$-a.s.}, \, \forall j\in \{1,\ldots,d\}.
    \end{align*}
\end{prop}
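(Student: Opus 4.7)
The plan is to exploit the fact that $\chi_j \in L^2_{\mathrm{pot}}$ by approximating it in $L^2_{\mathrm{cov}}$ by gradients of bounded local functions, and to handle each piece by applying the $\bbZ^d$-spatial ergodic theorem (Birkhoff). The lower moment hypothesis $\mean[1/\om(e)]<\infty$ enters via a Cauchy--Schwarz step that converts the $\om$-weighted norm $\|\cdot\|_{L^2_{\mathrm{cov}}}$ into an unweighted $L^1(\prob)$ control of edge values.

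By definition of $L^2_{\mathrm{pot}}$, there exist bounded local $\phi_k\!: \Om\to \bbR$ with $\mD\phi_k \to \chi_j$ in $L^2_{\mathrm{cov}}$; set $\Psi_k \ldef \chi_j - \mD\phi_k$, which still satisfies the cocycle property and has $\|\Psi_k\|_{L^2_{\mathrm{cov}}}\to 0$. For the gradient part, $\mD\phi_k(\om,x) = \phi_k(\tau_x\om) - \phi_k(\om)$, and the spatial ergodic theorem applied to $|\phi_k|$ yields
$$ \frac{1}{|B_n|}\sum_{x\in B_n} |\mD\phi_k(\om,x)| \;\leq\; \frac{1}{|B_n|}\sum_{x\in B_n} |\phi_k(\tau_x\om)| \,+\, |\phi_k(\om)| \;\xrightarrow[n\to\infty]{}\; \mean[|\phi_k|] + |\phi_k(\om)|, $$
$\prob$-a.s., so dividing by $n$ gives vanishing contribution from this term.

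For the error $\Psi_k$, I would use the cocycle property to telescope along a nearest-neighbour path $0 = y_0,y_1,\ldots,y_{|x|_1}=x$, obtaining
$$ |\Psi_k(\om,x)| \;\leq\; \sum_{\ell=0}^{|x|_1-1} |\Psi_k(\tau_{y_\ell}\om,\, y_{\ell+1}-y_\ell)|. $$
Averaging over $x\in B_n$ and exchanging sums, each edge $\{z, z\pm e_i\}$ with $z$ in a slightly enlarged ball $B_{n'}$ is counted $O(n|B_n|)$ times, so that
$$ \frac{1}{n|B_n|}\sum_{x\in B_n} |\Psi_k(\om,x)| \;\leq\; C_d \max_{i}\, \frac{1}{|B_{n'}|}\sum_{z\in B_{n'}} |\Psi_k(\tau_z\om,e_i)|. $$
By Birkhoff this converges $\prob$-a.s.\ to $C_d\max_i \mean[|\Psi_k(\om,e_i)|]$, and Cauchy--Schwarz gives
$$ \mean[|\Psi_k(\om,e_i)|] \;\leq\; \mean[1/\om(0,e_i)]^{1/2}\,\mean[\om(0,e_i)\,\Psi_k(\om,e_i)^2]^{1/2} \;\leq\; \mean[1/\om(e)]^{1/2}\,\|\Psi_k\|_{L^2_{\mathrm{cov}}}. $$
Combining the two bounds, $\limsup_n \frac{1}{n|B_n|}\sum_{x\in B_n} |\chi_j(\om,x)| \leq C_d\,\mean[1/\om(e)]^{1/2}\|\Psi_k\|_{L^2_{\mathrm{cov}}}$ $\prob$-a.s.; letting $k\to\infty$ completes the proof.

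The main obstacle is Step~3: passing from the $\om$-weighted smallness $\|\Psi_k\|_{L^2_{\mathrm{cov}}}\to 0$, which controls $\Psi_k(\om,e_i)$ only in a $\om(0,e_i)$-weighted sense, to an \emph{unweighted} $L^1$ bound along coordinate directions that can be summed over a linearly long path. This is precisely where $\mean[1/\om(e)]<\infty$ is used, and it also explains why $\ell^1$-sublinearity of $\chi$ is what one can afford under the minimal moment condition $M(1,1)$, with no additional input from the upper moments beyond $\mean[\om(e)]<\infty$ (needed to have $\Pi_j\in L^2_{\mathrm{cov}}$ in the first place).
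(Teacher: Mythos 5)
Your overall architecture (approximate $\chi_j$ by $\mD\phi_k$ in $L^2_{\mathrm{cov}}$, kill the bounded gradient part after dividing by $n$, and convert the weighted smallness of $\Psi_k=\chi_j-\mD\phi_k$ into an unweighted $L^1$ bound via Cauchy--Schwarz and $\mean[1/\om(e)]<\infty$) matches the paper's, and your last two displays are fine. The gap is the congestion estimate behind your key display
\begin{align*}
  \frac{1}{n|B_n|}\sum_{x\in B_n} |\Psi_k(\om,x)| \;\leq\; C_d \max_{i}\, \frac{1}{|B_{n'}|}\sum_{z\in B_{n'}} |\Psi_k(\tau_z\om,e_i)| .
\end{align*}
For this to follow from telescoping along paths \emph{rooted at the origin}, you would need every edge to lie on at most $O\bigl(n|B_n|/|B_{n'}|\bigr)=O(n)$ of the chosen paths. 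That is impossible: all $|B_n|-1\asymp n^d$ paths from $0$ must exit through one of the $2d$ edges incident to $0$, so some such edge carries $\gtrsim n^d\gg n$ paths when $d\geq 2$. The trivial bound you invoke (``each edge is counted $O(n|B_n|)$ times'') only yields the display with an extra factor of order $|B_{n'}|$, which destroys the argument. With canonical coordinate-ordered paths the congestion concentrates on the axes through the origin, and the surviving terms are one-dimensional ergodic averages whose a.s.\ limits are conditional expectations with respect to the $\tau_{e_i}$-invariant $\sigma$-algebra, not the constants $\mean[|\Psi_k(\om,e_i)|]$ — ergodicity is only assumed for the full $\bbZ^d$-action — so the argument does not close even after redoing the bookkeeping.

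The fix is the two-step structure of the paper's proof. First, control the \emph{mean-subtracted} quantity $\frac{1}{n|B_n|}\sum_{x\in B_n}|\chi_j(\om,x)-(\chi_j(\om,\cdot))_{B_n}|$ via the local $\ell^1$-Poincar\'e inequality on $B_n$; this is the rigorous substitute for your telescoping, since comparing to the ball average amounts to routing between \emph{pairs} of points, for which uniform congestion of order $n|B_n|$ per edge (i.e.\ the right normalisation) is achievable, and the right-hand side becomes an ergodic average over the edges of $B_n$ to which Birkhoff and your Cauchy--Schwarz step apply verbatim. Second — and this step is absent from your proposal — one must show separately that $\frac{1}{n}\bigl|(\chi_j(\om,\cdot))_{B_n}\bigr|\to 0$; the paper does this by a dyadic iteration over scales $n_k=2^k n_0$, comparing consecutive ball averages using volume regularity and summing a geometric series, with the anchoring $\chi_j(\om,0)=0$ entering only through the fixed initial scale $n_0$. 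Your attempt to anchor everything at the origin in one stroke is precisely what the congestion obstruction rules out.
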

\begin{proof}
We present here a rather concise argument following  \cite[Lemma~2]{BFO18} (cf.\ also \cite[Proposition~2.9]{ADS15}). 

\smallskip

  \emph{Step~1.} In a first step we show that 
  \begin{align} \label{eq:l1_step1}
    \lim_{n\to\infty} \frac 1 {|B_n|} \sum_{x\in B_n} \frac {|\chi_j(\om,x) - (\chi_j(\om,\cdot))_{B_n}|} n=0, \qquad  \text{$\prob$-a.s.}, \, \forall j\in \{1,\ldots,d\},
    \end{align}
where we write $(u)_{B_n} \ldef \frac{1}{|B_n|} \sum_{y \in B_n} u(y)$ for any function $u\!: \bbZ^d \to \bbR$. Recall that $\chi_j \in   L_{\mathrm{pot}}^2$. Thus, there exist bounded functions $\phi_{j,k}: \Om \to \bbR$ such that $D\phi_{j,k} \rightarrow \chi_j$ in $L_{\mathrm{cov}}^2$ as $k\to \infty$. 
Further, recall that by the local $\ell^1$-Poincar\'{e} inequality on $\bbZ^d$,  there exists a constant $C_{\mathrm{P}} \in (0, \infty)$ such that for every $u\!: \bbZ^d \to \bbR$,
\begin{align*}
  \sum_{x\in B_n}\mspace{-6mu} \big| u(x) - (u)_{B_n} \big|
  \;\leq\;
  C_{\mathrm{P}}\, n\, 
  \sum_{\substack{x, y \in B_n\\ y \sim x}}
 \big| u(x) - u(y)\big|.
\end{align*}
Using this and the fact that $\chi_j$ and $D\phi_{j,k}$ satisfy the cocycle property, we have
\begin{align*}
  & \frac 1 n  \frac 1 {|B_n|} \sum_{x\in B_n} \big|\chi_j(\om,x) - (\chi_j(\om,\cdot))_{B_n}\big| \\
 & \mspace{36mu} \; \leq \; 
  \frac 1 n  \frac 1 {|B_n|} \sum_{x\in B_n} \Big|(\chi_j-D\phi_{j,k})(\om,x) - \big((\chi_j-D\phi_{j,k})(\om,\cdot)\big)_{B_n}\Big| + \frac 4 n \|\phi_{j,k}\|_\infty \\
& \mspace{36mu}
\; \leq \;
C_{\mathrm{P}}\, \frac 1 {|B_n|}
      \sum_{\substack{x, y \in B_n\\ y \sim x}}
      \big|(\chi_j -D\phi_{j,k})(\tau_x \om, y-x) \big|  + \frac 4 n \|\phi_{j,k}\|_\infty.
\end{align*}
Since $\phi_{j,k}$ is bounded the last term converges to zero as $n\to \infty$. Hence, by the ergodic theorem, $\prob$-a.s.,
\begin{align*}
  & \limsup_{n\to \infty} \frac 1 n  \frac 1 {|B_n|} \sum_{x\in B_n} \big|\chi_j(\om,x) - (\chi_j(\om,\cdot))_{B_n}\big|
   \leq  C_{\mathrm{P}}\,\mean\!\Big[ \sum_{y\sim 0} \big|(\chi_j -D\phi_{j,k})(\om, y) \big|  \Big] \\
   & \mspace{36mu}
   \; \leq \;
  \bigg( \sum_{y\sim 0}\mean\!\big[ \om(0,y)^{-1} \big]\bigg)^{\! 1/2} \,  \Norm{\chi_j-D\phi_{j,k}}{L^2_{\mathrm{cov}}},
 \end{align*}
where we used the Cauchy-Schwarz inequality in the last step. Recalling that $\Norm{\chi_j-D\phi_{j,k}}{L^2_{\mathrm{cov}}}\rightarrow 0$ for $k\to \infty$, we obtain \eqref{eq:l1_step1}.
\medskip

\emph{Step~2.} From \eqref{eq:l1_step1} it follows that, for $\prob$-a.e.\ $\om$, for every $\delta>0$ there exists $n_0=n_0(\om,\delta)$ such that, for all $n\geq n_0$,
\begin{align*}
  \frac 1 {|B_n|} \sum_{x\in B_n} \frac {|\chi_j(\om,x) - (\chi_j(\om,\cdot))_{B_n}|} n \; \leq \; \delta.
\end{align*}
Set $n_k\ldef 2^k n_0$. Then, for all $k\geq 1$, using the volume regularity of the balls, 
\begin{align*}
 & \frac 1 {n_k} \Big| \big( \chi_j (\om, \cdot) \big)_{B_{n_k}} - \big(\chi_j (\om, \cdot\big))_{B_{n_{k-1}}} \Big|
   \leq 
  \frac 1 {n_k}  \frac 1 {| B_{n_{k-1}}|} \sum_{x\in B_{n_{k-1}}} \Big| \chi_j(\om,x) - \big( \chi_j (\om, \cdot) \big)_{B_{n_k}} \Big| \\
  & 
  \mspace{36mu}
  \; \leq \; 
  \frac c {n_k}  \frac 1 {| B_{n_{k}}|} \sum_{x\in B_{n_{k}}} \Big| \chi_j(\om,x) - \big( \chi_j (\om, \cdot) \big)_{B_{n_k}} \Big|
  \; \leq \; c \delta.
\end{align*}
Hence, noting that $n_k= 2^{k-l}n_l$ for $1\leq l \leq k$, we get
\begin{align*}
 & \frac 1 {n_k} \Big| \big( \chi_j (\om, \cdot) \big)_{B_{n_k}} - \big(\chi_j (\om, \cdot\big))_{B_{n_{0}}} \Big| 
  \; \leq \;
  \sum_{l=1}^k  \frac 1 {n_k} \Big| \big( \chi_j (\om, \cdot) \big)_{B_{n_l}} - \big(\chi_j (\om, \cdot\big))_{B_{n_{l-1}}} \Big| \\
  & \mspace{36mu}
  \; \leq \;
  c \delta \sum_{l=1}^k \frac{1}{2^{k-l}} 
  \; \leq \;
  2 c \delta.
\end{align*}
Thus, by the triangle inequality, for all $k\geq 1$,
\begin{align*}  
  I(n_k) &\ldef \frac 1 {n_k} \, \frac 1 {|B_{n_k}|} \sum_{x\in B_{n_k}} 
  |\chi_j(\om,x)| \\
  & \leq 
  \delta + \frac 1 {n_k} \Big| \big( \chi_j (\om, \cdot) \big)_{B_{n_k}} - \big(\chi_j (\om, \cdot\big))_{B_{n_{0}}} \Big| + \frac 1 {n_k} \big| \big(\chi_j (\om, \cdot\big))_{B_{n_{0}}} \big| \\
&
\leq
(2c+1) \delta + \frac 1 {n_k} \big| \big(\chi_j (\om, \cdot\big))_{B_{n_{0}}} \big|,
\end{align*}
where the last term converges to zero as $k\to \infty$. Since $\delta>0$ is arbitrary, this implies $\lim_{k\to \infty} I(n_k)=0$. 
Noting that $I(n)\leq c I(n_k)$ if $n_{k-1} < n \leq n_k$, we get the claim.
\end{proof}

It remains to upgrade the $\ell^1$-sublinearity in Proposition~\ref{prop:l1_sublin} to an $\ell^\infty$-sublinearity statement. The proof is based on a maximal inequality for harmonic functions, proved by Moser iteration schemes, and requires the stronger (non-optimal) moment condition.
For any non-empty, finite $B \subset \bbZ^d$ and $p \in (0, \infty)$, we introduce space-averaged norms on functions $f\!: B \to \bbR$ by
\begin{align*}
  \Norm{f}{p,B}
  \;\ldef\;
  \biggl(\frac{1}{|B|}\sum_{x \in B} |f(x)|^p\biggr)^{\!\!1/p}, 
  \qquad
  \Norm{f}{\infty, B}
  \;\ldef\;
  \max_{x \in B} |f(x)|.
\end{align*}
For abbreviation set
\begin{align*}
  \nu^\om(x) \ldef \sum_{y\sim x} \frac 1 {\om(x,y)}, \qquad x\in \bbZ^d.
\end{align*}

\begin{theorem}[Maximal inequality] \label{thm:MI}
  Let $d\geq 3$. For any $x_0 \in \bbZ^d$ and $n \geq 1$, suppose that $\cL^{\om} u = 0$ on $B(x_0,n)$.   Then, for any $p,q \in (1, \infty]$ with
  \begin{align*}
    \frac{1}{p} + \frac{1}{q} \;<\; \frac{2}{d-1}, 
  \end{align*}
 we have that 
  \begin{align}\label{eq:MP}
    \max_{x \in B(x_0,n)} |u(x)|
    \;\leq\;
    \Lambda^\om(B(x_0,2n)) 
    \, \Norm{u}{1, B(x_0,2 n)},
  \end{align}
  where, for any $B\subset \bbZ^d$,
\begin{align*}
  \Lambda^\om(B) \; \ldef \; c_5 \, \Big( 1 \vee
  \Norm{\mu^{\om}}{p, B}\, \Norm{\nu^{\om}}{q,B}
\Big)^{\!\ka}
\end{align*}
  for some $\ka = \ka(d,p,q) \in (1,\infty)$ and $c_5 = c_5(d,p,q) \in (0,\infty)$.
\end{theorem}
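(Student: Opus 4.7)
The plan is to run a Moser iteration adapted to the degenerate weights $\om$, combining a Caccioppoli energy estimate with a weighted Sobolev inequality that absorbs the random weights through H\"older's inequality. First I reduce to the case where $u \geq 0$ is a subsolution by treating $u^+$ and $u^-$ separately. Throughout I write $B_r \ldef B(x_0, r)$.

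I first establish a \emph{Caccioppoli energy estimate}: for $\alpha \geq 1$ and a cutoff $\eta\!:\bbZ^d \to [0,1]$ supported in $B_{r'}$ with $\eta \equiv 1$ on $B_r$, testing $\cL^\om u = 0$ against $\eta^2 u^{2\alpha-1}$ and performing discrete integration by parts yields
\begin{align*}
\sum_{\substack{x,y \in B_{r'} \\ x\sim y}} \om(x,y)\,\bigl(\eta(x)u^\alpha(x)-\eta(y)u^\alpha(y)\bigr)^2
\;\leq\;
C\alpha^2 \sum_{\substack{x,y \in B_{r'} \\ x\sim y}} \om(x,y)\,(\eta(x)-\eta(y))^2\, u^{2\alpha}(x).
\end{align*}
The heart of the proof is then the \emph{weighted Sobolev step}. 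Applying a sharpened discrete Sobolev inequality to $\eta u^\alpha$ (with exponent $d/(d-1)$, derived from the $BV$-embedding integrated slice-by-slice on the lattice) and combining it with H\"older's inequality against $\mu^\om$ on the left-hand side and against $\nu^\om$ on the gradient side converts the Caccioppoli bound into a reverse-H\"older-type inequality
\begin{align*}
\Norm{u^\alpha}{\sigma, B_r}
\;\leq\;
\frac{C\alpha^2\, \Lambda_0^\om(B_{2n})}{(r'-r)^2}\,\Norm{u^\alpha}{\sigma', B_{r'}},
\end{align*}
where $\Lambda_0^\om(B_{2n}) \leq C\bigl(1\vee \Norm{\mu^\om}{p,B_{2n}}\, \Norm{\nu^\om}{q,B_{2n}}\bigr)$ and the gain ratio $\theta \ldef \sigma/\sigma' > 1$ precisely when $\tfrac{1}{p}+\tfrac{1}{q}<\tfrac{2}{d-1}$.

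I then iterate this bound on a dyadic sequence of concentric radii $r_k \ldef n(1+2^{-k})$ with growing exponents $\alpha_k \ldef \theta^k$; the summability of $\sum_k 4^k\theta^{-k}$ turns the iteration into a telescoping product and yields
\begin{align*}
\Norm{u}{\infty, B_n}
\;\leq\;
\bigl(C\,\Lambda_0^\om(B_{2n})\bigr)^{\!\kappa_0}\,\Norm{u}{\sigma_0, B_{2n}}
\end{align*}
for some $\sigma_0 > 0$ and $\kappa_0=\kappa_0(d,p,q)<\infty$. To replace $\sigma_0$ by $1$ on the right, I would apply this bound on a shifted ball $B(z,n/2)\subset B_{2n}$ for $z$ attaining the maximum, and interpolate via $\Norm{u}{\sigma_0, B} \leq \Norm{u}{\infty,B}^{1-1/\sigma_0}\Norm{u}{1,B}^{1/\sigma_0}$, absorbing the $\Norm{u}{\infty}$ factor by Young's inequality to obtain \eqref{eq:MP} with the asserted exponent $\kappa$.

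The main obstacle is the Sobolev step: extracting the sharpened exponent $d/(d-1)$ on the lattice and bookkeeping the weighted H\"older estimates so that the iteration gain ratio $\theta$ is strictly greater than $1$ \emph{exactly} under the condition $\tfrac{1}{p}+\tfrac{1}{q}<\tfrac{2}{d-1}$. This sharpening of the Sobolev step is what drives the improvement from the earlier threshold $\tfrac{2}{d}$ of \cite{ADS15} to the present $\tfrac{2}{d-1}$, and is the key technical novelty of \cite{BS20}.
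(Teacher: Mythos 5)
Your overall architecture (Caccioppoli estimate, weighted Sobolev step via H\"older against $\mu^\om$ and $\nu^\om$, dyadic Moser iteration, then an interpolation/absorption step to lower the right-hand exponent to $1$) is the correct skeleton, and it is indeed how the precursor results in \cite{ADS16} are proved. The gap is in the step you yourself identify as the heart of the proof. The exponent $d/(d-1)$ you invoke is just the standard Sobolev conjugate of $1$ in dimension $d$ (the Gagliardo--Nirenberg/$BV$ embedding obtained by slicing), and if you track the exponents through your scheme it does \emph{not} deliver a gain ratio $\theta>1$ under $\tfrac1p+\tfrac1q<\tfrac2{d-1}$. Concretely: writing $v=\eta u^\alpha$, the chain ``$\Norm{v^2}{d/(d-1),B}\lesssim \sum|v||\nabla v|\le(\sum\nu^\om v^2)^{1/2}(\sum\om|\nabla v|^2)^{1/2}$, then H\"older with exponents $q$ and $p$ and Caccioppoli'' yields a reverse-H\"older inequality whose gain ratio is $\theta=\frac{d}{d-1}\bigl(1-\tfrac12(\tfrac1p+\tfrac1q)\bigr)$, and $\theta>1$ is equivalent to $\tfrac1p+\tfrac1q<\tfrac2d$. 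The same happens if one instead uses the $L^\rho\to L^{d\rho/(d-\rho)}$ Sobolev inequality with $\rho=2q/(q+1)$. In other words, the two occurrences of ``$d-1$'' (in the Sobolev exponent $d/(d-1)$ and in the moment condition $2/(d-1)$) are unrelated, and your scheme as written reproduces the threshold $2/d$ of \cite{ADS15,ADS16}, not the asserted $2/(d-1)$.

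The actual mechanism in \cite{BS20} is different and is missing from your proposal: rather than applying a Sobolev inequality on the full $d$-dimensional ball, Bella and Sch\"affner exploit the freedom in the choice of the intermediate radius. By an averaging (pigeonhole) argument over $\rho\in[r,r']$ one selects a \emph{good sphere} on which the relevant norms are controlled by their average over the annulus, and the Sobolev embedding is then applied on that $(d-1)$-dimensional sphere, whose Sobolev conjugate involves $d-1$ in place of $d$. It is this optimization of the cut-off/radius that converts the condition $\tfrac1p+\tfrac1q<\tfrac2d$ into $\tfrac1p+\tfrac1q<\tfrac2{d-1}$ (and is also why the theorem is stated for $d\ge3$). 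Without this ingredient, your iteration closes only under the stronger moment condition, so the proof as proposed does not establish the stated theorem.
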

\begin{proof}
 See \cite[Theorem~2]{BS20}. In dimension $d\geq 2$,  for $p,q \in (1,\infty]$ satisfying the slightly stronger condition $1/p+1/q<2/d$, similar maximal inequalities have been shown in \cite[Proposition~3.2 and Corollary~3.4]{ADS16} on a general class of underlying graphs, cf.\ also the parabolic version for possibly time-dependent conductances in \cite[Theorem~2.7]{ACS21}. All those proofs are based on inductive schemes such as Moser or De~Giorgi iterations. 
\end{proof}

Finally, the following generalisation of the ergodic theorem will help us to control ergodic averages on scaled balls with varying centre-points. 
\begin{prop} \label{prop:krengel_pyke}
  Let $K>0$ and  $\cB \ldef \bigl\{ B : B \text{ closed Euclidean ball in } [-K,K]^d\bigr\}$.  Suppose that Assumption~\ref{ass:P} holds.  Then, for any $f \in L^1(\Om)$,
  \begin{align*}
    \lim_{n \to \infty} \sup_{ B \in \cB}
    \biggl|
      \frac{1}{n^{d}}\, \sum_{x \in (nB) \cap \bbZ^d}\mspace{-12mu} f \circ \tau_{x}
      \,-\,
      |B| \cdot \mean\bigl[ f \bigr]
    \biggr|
    \;=\;
    0, 
    \qquad \prob\text{-a.s.},
  \end{align*}
  where $|B|$ denotes the Lebesgue measure of $B$.
\end{prop}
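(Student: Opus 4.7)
The plan is to combine the multiparameter pointwise ergodic theorem for the stationary ergodic $\bbZ^d$-action $(\tau_x)_{x\in\bbZ^d}$ with a finite approximation of $\cB$ that, together with a monotone sandwich, upgrades pointwise convergence along finitely many balls to the uniform convergence across $\cB$ that is claimed.

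First, I would split $f=f^+-f^-$; by linearity in $f$ it suffices to treat each nonnegative part separately, so one may assume $f\ge 0$. For any fixed Euclidean ball $B=B(c,r)\subset\bbR^d$, the multiparameter Birkhoff--Wiener ergodic theorem applied to the stationary ergodic random field $(f\circ\tau_x)_{x\in\bbZ^d}$ along the dilated sets $nB$ (which form a Van Hove sequence regardless of the centre, since their boundary layer is $O(n^{d-1})$ while their volume is $\sim n^d |B|$) yields
\begin{align*}
\lim_{n\to\infty}\frac{1}{n^d}\sum_{x\in (nB)\cap\bbZ^d} f\circ\tau_x \;=\; |B|\,\mean[f],\qquad \prob\text{-a.s.}
\end{align*}

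Next, I would parametrise $\cB$ by $(c,r)\in [-K,K]^d\times[0,R]$ with $R\ldef 2K\sqrt d$, and, given $\delta>0$, choose a finite $\delta$-net $\{(c_i,r_i)\}_{i=1}^{N_\delta}$ of this compact parameter set and set $B_i^\pm \ldef B(c_i,r_i\pm 2\delta)$. A quick triangle-inequality check shows that for every $B=B(c,r)\in\cB$ there exists $i$ with
\begin{align*}
B_i^-\;\subset\;B\;\subset\;B_i^+ \qquad\text{and}\qquad |B_i^+|-|B_i^-|\;\leq\; C\delta,
\end{align*}
where $C=C(d,K)$ depends only on the dimension and on $K$. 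I would then apply the single-ball statement above simultaneously to the \emph{finite} collection $\{B_i^\pm\}_{i=1}^{N_\delta}$ on a common $\prob$-full-measure event $\Omega_\delta$.

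On $\Omega_\delta$, the nonnegativity of $f$ together with the inclusions $(nB_i^-)\cap\bbZ^d\subset (nB)\cap\bbZ^d\subset (nB_i^+)\cap\bbZ^d$ sandwich the sum over $(nB)\cap\bbZ^d$ between the sums over $(nB_i^\pm)\cap\bbZ^d$. Passing $n\to\infty$, using $|B|\in[|B_i^-|,|B_i^+|]$ and $|B_i^+|-|B_i^-|\le C\delta$, gives
\begin{align*}
\limsup_{n\to\infty}\sup_{B\in\cB}\Bigl|\frac{1}{n^d}\sum_{x\in (nB)\cap\bbZ^d}f\circ\tau_x - |B|\,\mean[f]\Bigr|\;\le\;C\delta\,\mean[f],
\end{align*}
and intersecting the events $\Omega_{1/m}$ over $m\in\bbN$ and letting $\delta\downarrow 0$ completes the argument. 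The main obstacle I would anticipate is precisely the uniformity over $B$ in the final supremum: without the monotone sandwich structure, one would face an uncountable family of null sets coming from the pointwise ergodic theorem. The reduction to a finite $\delta$-net, made possible by the assumption $f\ge 0$ and by the compactness of the parameter range, is what renders this harmless; the rest is routine geometry.
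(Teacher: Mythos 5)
Your argument is essentially a self-contained reconstruction of the result the paper simply cites: the proof of Proposition~\ref{prop:krengel_pyke} in the text is the single line ``see [KP87, Theorem~1]'', i.e.\ the Krengel--Pyke uniform ergodic theorem, and your sandwich argument (reduce to $f\ge 0$, approximate the uncountable family $\cB$ by a finite $\delta$-net of inner and outer balls $B_i^\pm$, use monotonicity of the sums in the averaging set, and let $\delta\downarrow 0$ along a countable sequence) is precisely the standard mechanism behind such uniform ergodic theorems for classes of convex sets. So the route is sound and the geometric bookkeeping ($B_i^-\subset B\subset B_i^+$, $|B_i^+|-|B_i^-|\le C(d,K)\,\delta$) is correct; what you gain over the paper is an actual proof, at the cost of having to supply the single-ball input yourself.

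That single-ball input is the one place where your justification is too thin. You invoke a pointwise ergodic theorem along the dilated balls $nB=B(nc,nr)$ and justify its applicability by the Van Hove property (boundary layer $O(n^{d-1})$ versus volume $\sim n^d|B|$). The Van Hove/F{\o}lner condition guarantees the \emph{mean} ergodic theorem, but it is not sufficient for $\prob$-a.s.\ convergence of $L^1$ averages: there are F{\o}lner sequences along which the pointwise ergodic theorem fails for some integrable function. For $c\neq 0$ the sets $nB$ also drift and are not nested, so the basic Tempelman theorem (which assumes monotone sets) does not apply verbatim either. The step is nevertheless salvageable, because the sequence $(nB)_{n\ge 1}$ is \emph{tempered} in the Shulman--Lindenstrauss sense: $\bigcup_{k<n}\bigl((nB)-(kB)\bigr)\subset B\bigl(0,\,n(|c|+2r)\bigr)$ has cardinality bounded by a constant multiple of $|(nB)\cap\bbZ^d|$, so Lindenstrauss's pointwise ergodic theorem for tempered F{\o}lner sequences (or Tempelman's theorem for regular sequences) yields exactly the a.s.\ convergence you need for each fixed ball. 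With that reference substituted for the Van Hove remark, your proof is complete.
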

\begin{proof}
  See, for instance, \cite[Theorem~1]{KP87}.
\end{proof}

For abbreviation write
$\chi^{(n)}(\om, x) \ldef \frac{1}{n}\, \chi(\om, x)$.

\begin{prop} \label{prop:sublin_corr}
  Under the assumptions of Theorem~\ref{thm:ip_ergodic}, for any $L \geq 1$ and $j = 1, \ldots, d$,
  \begin{align} \label{eq:sublin_corr}
    \lim_{n \to \infty} \max_{x \in B(0, L n)} \big| \chi_j^{(n)}(\om, x) \big|
    \;=\;
    0,
    \qquad \prob\text{-a.s.}
  \end{align}
\end{prop}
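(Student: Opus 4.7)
The plan is to upgrade the $\ell^1$-sublinearity of Proposition~\ref{prop:l1_sublin} to the $\ell^\infty$-sublinearity \eqref{eq:sublin_corr} by combining it with a uniform modulus-of-continuity estimate for the rescaled corrector. We would derive the latter by applying the maximal inequality (Theorem~\ref{thm:MI}) to the $\cL_X^\om$-harmonic function $\Phi_j = \Pi_j - \chi_j$ of Proposition~\ref{prop:Phi+chi}(iv). We sketch the argument for $d \geq 3$; the case $d = 2$ is treated in \cite{Bi11} by a related but different route.

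\medskip

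Fix $\rho \in (0, 1)$ and $x_0 \in B(0, Ln) \cap \bbZ^d$.  Since $u \ldef \Phi_j(\om,\cdot) - (\Phi_j(\om,\cdot))_{B(x_0,2\rho n)}$ is $\cL_X^\om$-harmonic on $B(x_0,\rho n)$, Theorem~\ref{thm:MI} applied at scale $\rho n$ gives
\begin{align*}
  \max_{x \in B(x_0,\rho n)} |u(x)| \;\leq\; \Lambda^\om(B(x_0, 2\rho n))\, \|u\|_{1, B(x_0, 2\rho n)}.
\end{align*}
Splitting $u = [\Pi_j - (\Pi_j)_{B(x_0,2\rho n)}] - [\chi_j - (\chi_j)_{B(x_0,2\rho n)}]$, the first bracket is bounded by $2\rho n$ in sup-norm, while for the second the discrete $\ell^1$-Poincar\'e inequality together with the cocycle property gives
\begin{align*}
  \| \chi_j - (\chi_j)_{B(x_0,2\rho n)}\|_{1, B(x_0, 2\rho n)}
  \;\leq\; 2\, C_{\mathrm{P}}\, \rho n \cdot \frac{1}{|B(x_0, 2\rho n)|} \sum_{\substack{x, y \in B(x_0, 2\rho n)\\ y \sim x}} |\chi_j(\tau_x \om, y - x)|.
\end{align*}
The right-hand sum is an ergodic average of the $L^1(\prob)$-function $\om \mapsto \sum_{|e|=1} |\chi_j(\om, e)|$ (integrability follows from $\chi_j \in L^2_{\mathrm{cov}}$, Cauchy-Schwarz, and the moment condition $M(p,q)$), so Proposition~\ref{prop:krengel_pyke} yields convergence to $\sum_{|e|=1} \mean[|\chi_j(\om, e)|]$ uniformly in $x_0$ with $|x_0| \leq (L+1)n$. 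Applied to $\om \mapsto \mu^\om(0)^p$ and $\om \mapsto \nu^\om(0)^q$, the same proposition gives $\sup_{x_0 \in B(0,Ln)} \Lambda^\om(B(x_0, 2\rho n)) \leq C_\Lambda < \infty$ for $n$ large. Combining these ingredients will produce the uniform oscillation estimate
\begin{align*}
  \sup_{x_0 \in B(0,Ln)} \max_{x \in B(x_0, \rho n)} \Big| \chi_j^{(n)}(\om, x) - \tfrac{1}{n}(\chi_j)_{B(x_0, 2\rho n)} \Big| \;\leq\; C\rho + o(1) \qquad (n \to \infty),
\end{align*}
for some deterministic $C = C(d, p, q, L)$ independent of $\rho$.

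\medskip

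The proof would then be completed by a standard ``uniform Lipschitz plus $L^1$-convergence implies $L^\infty$-convergence'' argument. The oscillation estimate above implies that for any two lattice points $x, y \in B(0, Ln)$ with $|x - y| \leq \rho n$ one has $|\chi_j^{(n)}(\om, x) - \chi_j^{(n)}(\om, y)| \leq 2C\rho + o(1)$, so that the rescaled corrector is eventually approximately $2C$-Lipschitz on $[-L, L]^d$. On the other hand, rescaling Proposition~\ref{prop:l1_sublin} from $B_n$ to $B(0, Ln)$ yields $|B(0, Ln)|^{-1} \sum_{x \in B(0, Ln)} |\chi_j^{(n)}(\om, x)| \to 0$. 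If $|\chi_j^{(n)}(\om, x_n^*)| \geq \delta$ were to hold along a subsequence with $x_n^* \in B(0, Ln)$, the approximate Lipschitz bound would force $|\chi_j^{(n)}| \geq \delta/2$ throughout a lattice ball of radius $\sim \delta/(4C)$ around $x_n^*$, contradicting the vanishing of the $\ell^1$-average and thereby establishing \eqref{eq:sublin_corr}.

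\medskip

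The hard part will be promoting the pointwise-in-$x_0$ convergence of the ordinary ergodic theorem to convergence uniform over base points $x_0 \in B(0, Ln)$ that may depend on $n$. This is precisely what Proposition~\ref{prop:krengel_pyke} delivers; using it requires expressing both $\Lambda^\om(B(x_0, 2\rho n))$ and the centered $\ell^1$-norm of $\chi_j$ as shift-averages of shift-invariant $L^1(\prob)$-functions, which for $\chi_j$ is enabled by its cocycle property. Once this uniformity is secured, the bound $O(\rho) + o(1)$ is sharp enough to combine with $\ell^1$-sublinearity through the Lipschitz argument, avoiding any $O(L/\rho)$-step chaining that would otherwise accumulate a non-vanishing error.
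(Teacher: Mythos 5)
Your proposal is correct and follows essentially the same route as the paper's proof: the maximal inequality of Theorem~\ref{thm:MI} applied to the harmonic coordinate $\Phi_j$ on balls of radius comparable to $\rho n$, with $\Lambda^\om$ controlled uniformly over base points via Proposition~\ref{prop:krengel_pyke}, combined with the $\ell^1$-sublinearity of Proposition~\ref{prop:l1_sublin}. The only organizational difference is that the paper inserts the $\ell^1$-sublinearity of $\chi_j$ directly into the right-hand side of the maximal inequality over a finite cover of $B(0,n)$ (so neither the Poincar\'e step nor the Lipschitz-contradiction argument is needed), whereas you first extract an approximate Lipschitz bound and invoke sublinearity only at the end; both variants are valid.
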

\begin{proof}
We follow the arguments in \cite[Proposition~2]{BS20}. 
Fix $m \in \bbN$. For $n$ sufficiently large compared to $m$, we cover the ball $B(0,n)$ with finitely many balls $B(z_i, \lfloor \frac n m \rfloor)$,  such that $z_i\ldef \lfloor \frac n m \rfloor y_i$ for some $y_i \in [-2m, 2m]^d\cap \bbZ^d$, $i=1,\ldots, k_{m,n}$.
  For every $j = 1, \ldots, d$ and $z \in \bbZ^d$ define
  \begin{align*}
    u_j^z(\om, x)\ldef\chi_j(\om,x) -e_j \cdot (x-z)= - \Phi(\om,x)+ e_j\cdot z,
  \end{align*} 
 where $e_j$ denotes the $j$-th unit vector in $\bbZ^d$.
  Obviously, $u^z_j(\om, \cdot)$ is $\cL^\om_X$-harmonic. Thus, it follows from Theorem~\ref{thm:MI} that, for any $z\in \bbZ^d$, 
  \begin{align*} 
    \max_{x \in B(z,\lfloor \frac n m \rfloor)} \big| u_j^{z}(\om, x) \big|
    \;\leq\;
    \Lambda^\om(B(z, 2\lfloor \tfrac n m \rfloor))
    \Norm{u_j^{z}(\om, \cdot)}{1, B(z, 2 \lfloor \frac n m \rfloor)}.
  \end{align*}

Since $|e_j \cdot (x-z)|\leq c n/m$ for all $x\in B(z,\lfloor \frac n m \rfloor)$ and a suitable constant $c>0$, this implies 
\begin{align*}
  \max_{x \in B(z,\lfloor \frac n m \rfloor)} \big| \chi_j(\om, x) \big|
  & \;\leq \;
  \max_{x \in B(z,\lfloor \frac n m \rfloor)} \big| u_j^{z}(\om, x) \big| + c n/m \\
  &  \;\leq\;
    \Lambda^\om(B(z, 2\lfloor \tfrac n m \rfloor))
    \Norm{u_j^{z}(\om, \cdot)}{1, B(z, 2 \lfloor \frac n m \rfloor)} + c n/m \\
    &  \;\leq\;
    \Lambda^\om(B(z, 2\lfloor \tfrac n m \rfloor))
   \Big( \Norm{\chi_j(\om, \cdot)}{1, B(z, 2 \lfloor \frac n m \rfloor)} + c n/m \Big)+ c n/m.
  \end{align*}
  Hence,
  \begin{align*}
    & \max_{x \in B(0, n)} \big| \chi_j^{(n)}(\om, x) \big|
    \; \leq \; 
    \max_{i=1,\ldots, k_{m,n}}  \max_{x \in B(z_i,\lfloor \frac n m \rfloor)} \big| \chi_j^{(n)}(\om, x) \big| \\
    & \mspace{36mu} \; \leq \;
    \max_{i=1,\ldots, k_{m,n}} 
    \Lambda^\om(B(z_i, 2\lfloor \tfrac n m \rfloor))
    \Big( \Norm{\chi_j^{(n)}(\om, \cdot)}{1, B(z_i, 2 \lfloor \frac n m \rfloor)} + c /m \Big)+ c /m \\
    & \mspace{36mu} \; \leq \;
    \max_{i=1,\ldots, k_{m,n}} 
    \Lambda^\om(B(z_i, 2\lfloor \tfrac n m \rfloor))
    \Big( c m^d \Norm{\chi_j^{(n)}(\om, \cdot)}{1, B(0, 2n \rfloor)} + c /m \Big)+ c /m.
  \end{align*}
 Now, since $m\in \bbN$ is fixed, we invoke the version of the ergodic theorem in Proposition~\ref{prop:krengel_pyke} to obtain that
\begin{align*}
 &\limsup_{n\to \infty} \max_{i=1,\ldots, k_{m,n}} \Lambda^\om(B(z_i, 2\lfloor \tfrac n m \rfloor))
 \; \leq \;
 \limsup_{n\to \infty} \max_{y\in [-2m,2m]^d\cap \bbZ^d} \Lambda^\om(B(\lfloor \tfrac n m \rfloor y, 2\lfloor \tfrac n m \rfloor)) \\
& \mspace{36mu} \; \leq \;
\limsup_{n\to \infty} \max_{y\in [-2m,2m]^d\cap \bbZ^d}  c_5  \Big( 1 \vee
\Norm{\mu^{\om}}{p, B(\lfloor \tfrac n m \rfloor y, 2\lfloor \tfrac n m \rfloor)}\, \Norm{\nu^{\om}}{q,B(\lfloor \tfrac n m \rfloor y, 2\lfloor \tfrac n m \rfloor)}
\Big)^{\!\ka} \\
& \mspace{36mu} \; \leq \;
c \Big( 1 \vee \mean\big[\mu^\om(0)^p\big]^{1/p} \mean\big[\nu^\om(0)^q\big]^{1/q} \Big)^{\!\ka} < \infty.
\end{align*}
Combining the above with the $\ell^1$-sublinearity in Proposition~\ref{prop:l1_sublin} we get that
\begin{align*}
  \limsup_{n\to \infty}  \max_{x \in B(0, n)} \big| \chi_j^{(n)}(\om, x) \big|
    \; \lesssim \; 
    m^{-1} \Big( 1 \vee \mean\big[\mu^\om(0)^p\big]^{1/p} \mean\big[\nu^\om(0)^q\big]^{1/q} \Big)^{\!\ka} + m^{-1}.
\end{align*}
Since $m\in \bbN$ is arbitrary, this yields \eqref{eq:sublin_corr}  for $L=1$, and the trivial identity
\begin{align*}
  \lim_{n\to \infty} \frac 1 n \max_{x \in B(0, Ln)} \big| \chi_j(\om, x) \big| = L  \lim_{n\to \infty} \frac 1 n \max_{x \in B(0, n)} \big| \chi_j(\om, x) \big| = 0
\end{align*}
completes the proof.  
\end{proof}

\begin{proof}[Proof of Theorem~\ref{thm:ip_ergodic}]
  We may follow the arguments in \cite[Proposition~2.13]{ADS15} to show that Proposition~\ref{prop:sublin_corr} implies \eqref{eq:conv_corrProb}. Combining this with the QFCLT for the martingale part gives the result.
\end{proof}

\begin{remark} \label{rem:sublinear}
  The above proof uses the rather common route to establish the \emph{sublinearity everywhere} of the corrector in form of Proposition~\ref{prop:sublin_corr} to obtain the QFCLT.
However, the (non-optimal) moment condition $M(p,q)$ for $p,q\geq 1$ satisfying $1/p+1/q<2/(d-1)$ is known infinitesimally close to sharp for Proposition~\ref{prop:sublin_corr} to hold. In fact, in $d\geq 3$, for any $p,q\geq 1$ such that $1/p+1/q>2/(d-1)$, there exists an ergodic law $\prob$, satisfying $M(p,q)$, for which the corrector is (well defined yet) not sublinear everywhere, see \cite[Theorem~2.6]{BCKW21}. We refer to \cite{BCKW21, BM15} for approaches to show a QFCLT that avoid proving everywhere-sublinearity of the corrector. 
\end{remark}

\subsection{Local limit theorems}
While the aforementioned moment conditions are non-optimal for the QFCLT, they turn out to be (close to) sharp for the stronger quenched local limit theorem to hold. Recall that we denote by $p^\om(t,x,y)$ the heat kernel of the VSRW $X$ and by $q^\om(t,x,y)$ the heat kernel of the CSRW $Y$. Further, we write $\bar p^{\Sigma_X}(t,x,y)$ and $\bar p^{\Sigma_Y}(t,x,y)$ for the Gaussian transition densities with covariance matrices $\Sigma^2_X$ and $\Sigma^2_Y$, respectively, appearing in the QFCLT for $X$ and $Y$, see Theorem~\ref{thm:ip_ergodic}, Remark~\ref{rem:qip} and \eqref{eq:GaussHK} above. 

\begin{theorem}[Quenched local CLT]
  Let $d \geq 2$ and suppose Assumption~\ref{ass:environment} holds.

  \begin{enumerate}[(i)]
    \item \emph{Variable speed random walk.} Assume there exist  $p \in (1, \infty]$  and $q \in (d/2, \infty]$  with $$ \frac  1  p +  \frac 1 q < \frac 2 {d-1}$$ such that condition $M(p,q)$ is satisfied. Then, for any given compact sets $I \subset (0, \infty)$ and $K \subset \bbR^d$,
    \begin{align*}
      \lim_{n\to \infty} \sup_{x\in K} \sup_{t\in I} \big| n^d p^\om(n^2 t, 0, \lfloor nx \rfloor) - \bar p^{\Sigma_X}(t,0,x) \big| =0, \qquad \text{$\prob$-a.s.}
    \end{align*}

    \item \emph{Constant speed random walk.} Assume there exist  $p,q \in (1, \infty]$ with $$ \frac  1  p +  \frac 1 q < \frac 2 d$$ such that condition $M(p,q)$ is satisfied. Then, for any given compact sets $I \subset (0, \infty)$ and $K \subset \bbR^d$,
    \begin{align*}
      \lim_{n\to \infty} \sup_{x\in K} \sup_{t\in I} \big| n^d q^\om(n^2 t, 0, \lfloor nx \rfloor) - a \, \bar p^{\Sigma_Y}(t,0,x) \big| =0, \qquad \text{$\prob$-a.s.},
    \end{align*}
    with $a\ldef 1/\mean[\mu^\om(0)]$. 
  \end{enumerate}
\end{theorem}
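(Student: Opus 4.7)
\emph{Approach.} The plan is the classical Barlow--Hambly route of upgrading the distributional convergence supplied by the QFCLT to pointwise convergence of densities, via a quenched equicontinuity estimate for the rescaled heat kernels. Concretely, for fixed compact $I \subset (0, \infty)$ and $K \subset \bbR^d$, I would show that the rescaled family
$$
\bigl\{ (t, x) \,\mapsto\, n^d\, p^\om(n^2 t, 0, \lfloor n x\rfloor) \bigr\}_{n \geq 1}
$$
is, for $\prob$-a.e.\ $\om$, uniformly bounded and equicontinuous on $I \times K$, identify the unique possible limit as $\bar p^{\Si_X}(t, 0, x)$ by testing against smooth compactly supported functions, and conclude via Arzel\`a--Ascoli.

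\emph{Step 1 (weak convergence from the QFCLT).} For any $f \in C_c(\bbR^d)$ and $t > 0$, Theorem~\ref{thm:ip_ergodic} yields, for $\prob$-a.e.\ $\om$,
$$
\Mean_0^\om\bigl[ f(X_{n^2 t}/n) \bigr]
\;=\;
n^{-d} \sum_{y \in \bbZ^d} n^d\, p^\om(n^2 t, 0, y)\, f(y/n)
\;\underset{n \to \infty}{\longrightarrow}\;
\int_{\bbR^d} f(y)\, \bar p^{\Si_X}(t, 0, y)\, \md y.
$$
After a Riemann-sum argument using the equicontinuity from Step~2, any $C(I \times K)$-subsequential limit of $n^d p^\om(n^2 \cdot, 0, \lfloor n \cdot \rfloor)$ must agree with $\bar p^{\Si_X}(t, 0, x)$, pinning down the limit uniquely.

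\emph{Step 2 (equicontinuity via parabolic H\"older estimates).} Since $(t, y) \mapsto p^\om(t, 0, y)$ is caloric, i.e.\ solves $\partial_t u - \cL_X^\om u = 0$, away from the origin, equicontinuity would follow from a quenched parabolic H\"older estimate: there exist $\gamma \in (0, 1)$ and a random constant $C^\om(R, x_0)$, growing subpolynomially in $R$, such that for any $\cL_X^\om$-caloric function $u$ on a parabolic cylinder $Q_R = (-R^2, 0] \times B(x_0, R)$,
$$
|u(t, x) - u(s, y)|
\;\leq\;
C^\om(R, x_0)\, R^{-\gamma}\, \bigl( |t - s|^{1/2} + |x - y|\bigr)^\gamma \, \sup_{Q_R} |u|.
$$
This is obtained by Moser iteration: a weighted Sobolev/Nash inequality on the graph produces the on-diagonal bound $p^\om(t, x, x) \leq C^\om\, t^{-d/2}$ for large $t$; Moser iteration converts this into a mean value inequality for non-negative subcaloric functions, whose iteration along geometrically shrinking parabolic cylinders yields the H\"older estimate. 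The random constants depend on spatial averages of $\mu^\om$ and $\om(e)^{-1}$; under $M(p, q)$ with $q > d/2$ and $1/p + 1/q < 2/(d-1)$, Proposition~\ref{prop:krengel_pyke} controls these averages uniformly over the cylinders entering the iteration, while leaving enough room in the Sobolev exponents for the per-scale oscillation decay to survive the iteration.

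\emph{CSRW case and main obstacle.} The CSRW case proceeds identically after replacing $p^\om$ by $q^\om$ and $\Si_X$ by $\Si_Y$, but now the reference measure is $\mu^\om$ rather than the counting measure, so the weighted Sobolev inequality improves and the symmetric condition $1/p + 1/q < 2/d$ suffices. The prefactor $a = 1/\mean[\mu^\om(0)]$ enters because $q^\om$ is the density with respect to $\mu^\om$ whereas $\bar p^{\Si_Y}$ is the density with respect to Lebesgue measure; summing $\mu^\om(y)$ over a mesoscopic box and using the ergodic theorem produces exactly the factor $\mean[\mu^\om(0)]$. The main obstacle is Step~2: one must carry out Moser iteration with $\om$-dependent Sobolev and cut-off constants, track how the random constants depend on local integrability of $\mu^\om$ and $\om^{-1}$, and verify that the (essentially sharp) moment thresholds give enough decay per scale to close the iteration — it is precisely at these thresholds that the method stops working.
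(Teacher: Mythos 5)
Your proposal is correct and follows essentially the same route the paper indicates: the Barlow--Hambly scheme of combining the QFCLT with a macroscopic H\"older/oscillation estimate for space-time harmonic functions, the latter obtained from a (weak) parabolic Harnack inequality proved by Moser--De Giorgi iteration with random constants controlled through the ergodic theorem under the moment condition $M(p,q)$. The paper itself only cites \cite{BS22} and \cite{ADS16} for the two parts, but the strategy it describes immediately after the theorem statement matches your outline, including the origin of the prefactor $a=1/\mean[\mu^\om(0)]$ in the CSRW case.
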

\begin{proof}
  See \cite[Theorem~4]{BS22} for (i) and \cite[Theorem~1.11]{ADS16} for (ii).
\end{proof}
\begin{remark}
 The moment condition in (ii) is infinitesimally close to sharp for the quenched local limit theorem to hold for the CSRW. In fact, for any $p,q\geq 1$ with $1/p+1/q>2/d$, $d\geq 2$, there exists an ergodic law $\prob$ satisfying condition $M(p,q)$, that generates sufficiently many deep traps so that the quenched local limit theorem fails, see \cite[Theorem~5.4]{ADS16}. 

In a similar spirit, for the VSRW, Deuschel and Fukushima \cite{DF20} constructed a stationary and ergodic environment, built from random layered conductances with $\om(e) \geq 1$ for $\prob$-a.e.\  $\om$ (that is
$q = \infty)$ and $\mean[\om(e)^p] < \infty$ with $p < (d-1)/
2$, such that the quenched local limit theorem
fails in dimension $d \geq 4$, see \cite[Proposition~1.5]{DF20}.
\end{remark}

Unlike the QFCLT, the quenched local limit theorem is not easily preserved under time-changes, cf.\ Remark~\ref{rem:qip}-(ii) above. Nevertheless, for RCMs under a class of stationary speed measures a quenched local limit theorem has been derived in \cite{AT21}, again under suitable moment conditions.  In random graph setting, mentioned in Remark~\ref{rem:qip}-(iii) above, a quenched local limit theorem has been established for the VSRW in \cite[Section~5]{ACS21} under as the same conditions as for the QFCLT in \cite{DNS18}. Similar homogenization results have been obtained for a class of symmetric diffusions in degenerate random media in \cite{CD16, CD15}.

The main strategy to prove a local limit theorem follows the idea from \cite{BH09}. The proof requires the QFCLT and  in addition some H\"older regularity estimate on the macroscopic scale for the heat kernel, which can be directly deduced from an oscillation inequality for space-time harmonic functions. In \cite{ADS16}, the latter is obtained from a parabolic Harnack inequality, see \cite[Theorem~1.4]{ADS16}. However, for the derivation of required oscillation inequality a weaker version of a parabolic Harnack inequality is sufficient, as observed e.g.\ in \cite{ACS21, BS22, CKW24}. 
The proofs rely on extensions of classical Moser and De~Giorgi iteration techniques to discrete finite-difference divergence-form operators with degenerate coefficients.

\subsection{Heat kernel estimates}
Next we discuss heat kernel estimates in the general ergodic setting. We begin with the following Gaussian heat kernel upper bound for the CSRW. 

\begin{theorem}[\cite{ADS16a}] \label{thm:hkeCSRW}
 Let $d\geq 2$ and suppose that Assumption~\ref{ass:environment} holds. Assume there exist  $p,q \in (1, \infty]$ with $$ \frac  1  p +  \frac 1 q < \frac 2 d$$ such that condition $M(p,q)$ is satisfied.
 Then, there exist constants $c_i \in (0,\infty)$, $i=6,\ldots,9$, only depending on $d, p, q, \mean[\om(e)^p]$ and $\mean[\om(e)^{-q}]$, such that, for $\prob$-a.e.\ $\om$,  for all $x \in \bbZ^d$ there exists $N_1(x)=N_1(x, \om) < \infty$ such that for any given $t$ with $\sqrt{t} \geq N_1(x)$ and all $y \in \bbZ^d$ the following hold.
  \begin{enumerate}
    \item [(i)] If $|x-y|\leq c_6 t$ then
      \begin{align*}
        q^\om(t,x, y)
        \;\leq\;
        c_7\, t^{-d/2}\,  \exp\!\big(-c_8 |x-y|^2/t\big).
      \end{align*}
    \item [(ii)] If $|x-y|\geq c_6 t$ then
      \begin{align*}
        q^\om(t,x, y)
        \;\leq\;
        c_7 \, \exp\!\big(-c_9 |x-y| (1 \vee \log(|x-y|/t))\big).
      \end{align*}
  \end{enumerate}
\end{theorem}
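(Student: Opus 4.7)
The strategy is the classical Nash--Davies program adapted to degenerate ergodic environments: establish a weighted Sobolev inequality whose constant depends only on local $\ell^p$ and $\ell^q$ averages of $\mu^\omega$ and $\nu^\omega$, use it to drive a Nash iteration and obtain an on-diagonal upper bound, then apply Davies' perturbation method to propagate this bound off-diagonal.

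\medskip

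First, I would prove a local weighted Sobolev inequality on balls $B = B(x_0,n)$ of the form
\begin{equation*}
  \Big(\frac{1}{|B|}\sum_{x\in B}\mu^\omega(x)\,|f(x)|^{2\rho}\Big)^{1/\rho}
  \;\leq\;
  C\, n^{2}\,\|\mu^\omega\|_{p,B}^{1/p}\,\|\nu^\omega\|_{q,B}^{1/q}\,\frac{1}{|B|}\,\cE_B^\omega(f),
\end{equation*}
where $\cE_B^\omega(f)=\sum_{\{x,y\}\subset B}\omega(x,y)(f(x)-f(y))^{2}$, valid for all $f$ supported in $B$ and with an effective Sobolev exponent $\rho>1$. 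Such an inequality is obtained by combining the classical unweighted discrete Sobolev inequality on $\bbZ^d$ with two applications of H\"older's inequality: one that relates $\sum_B|f(x)-f(y)|$ to the square root of $\cE_B^\omega(f)$ via the weight $1/\omega$ (producing the $\nu^\omega$-norm), and one that reweights the left-hand side by $\mu^\omega$ (producing the $\mu^\omega$-norm). Bookkeeping of the exponents shows that $\rho$ stays strictly above $1$ precisely under the borderline condition $1/p+1/q<2/d$, which is the origin of the moment assumption. The ergodic theorem, in the form of Proposition~\ref{prop:krengel_pyke}, then produces, for $\prob$-a.e.\ $\omega$ and every $x\in\bbZ^d$, a random scale $N_1(x)$ above which the averages $\|\mu^\omega\|_{p,B}^{1/p}\|\nu^\omega\|_{q,B}^{1/q}$ are bounded by a deterministic constant depending only on $\mean[\mu^\omega(0)^p]$ and $\mean[\nu^\omega(0)^q]$.

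\medskip

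Second, plugging this weighted Sobolev inequality into the standard Nash iteration for the parabolic equation $\partial_t u=\cL_Y^\omega u$ yields the on-diagonal bound $q^\omega(t,x,x)\leq c\,t^{-d/2}$ for $\sqrt{t}\geq N_1(x)$. To upgrade this to off-diagonal Gaussian decay I would then apply Davies' perturbation method. For $\psi\!:\bbZ^d\to\bbR$ with $\|\psi\|_{\mathrm{Lip}}\leq\lambda$, let $T_t$ denote the CSRW semigroup and set $T_t^\psi f\ldef e^{-\psi}T_t(e^\psi f)$. A direct energy computation exploiting the reversibility of $\cL_Y^\omega$ with respect to $\mu^\omega$ and the identity $\sum_{y\sim x}\omega(x,y)=\mu^\omega(x)$ gives the $\omega$-uniform bound
\begin{equation*}
  \|T_t^\psi\|_{L^2(\mu^\omega)\to L^2(\mu^\omega)}\;\leq\;\exp\!\bigl(t\,(\cosh\lambda-1)\bigr).
\end{equation*}
Combining the decomposition $T_t^\psi=T_{t/2}^\psi T_{t/2}^\psi$ with the on-diagonal bound at $x$ and its dual, then choosing $\psi(z)=\lambda\,\hat v\cdot z$ with $\hat v=(y-x)/|y-x|$, one obtains
\begin{equation*}
  q^\omega(t,x,y)\;\leq\; c\,t^{-d/2}\,\exp\!\bigl(-\lambda|y-x|+t(\cosh\lambda-1)\bigr)
\end{equation*}
for $\sqrt{t}\geq N_1(x)$. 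Optimizing over $\lambda\geq 0$ produces the two regimes of the theorem: for $|x-y|\leq c_6 t$ one takes $\lambda\asymp|x-y|/t$ where $\cosh\lambda-1\asymp\lambda^2/2$, so the exponent becomes $-c\,|x-y|^2/t$; for $|x-y|>c_6 t$ the minimizer sits at $\lambda\asymp\log(|x-y|/t)$ where $\cosh\lambda\asymp e^\lambda/2$, producing the correction $-c\,|x-y|\log(|x-y|/t)$.

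\medskip

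The main obstacle is the first step: constructing the weighted Sobolev inequality with an effective Sobolev exponent strictly above $1$ exactly at the borderline $1/p+1/q<2/d$, and arranging the Nash iteration so that its constant depends only on the local averages of $\mu^\omega$ and $\nu^\omega$ rather than on pointwise values of the conductances. The exponent bookkeeping through H\"older is delicate and drives the moment condition. Once the ultracontractivity bound is in hand, the Davies argument is essentially routine for the CSRW; the cosh appearing naturally from the perturbation computation is precisely what forces the two-regime structure, reflecting the fact that even with $\mu^\omega$ as speed measure the walk cannot overcome a deviation much larger than $t$ along a smooth potential, so that the Gaussian regime must be complemented by an exponential-logarithmic one in the far off-diagonal range.
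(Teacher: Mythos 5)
Your overall architecture --- a local Sobolev inequality whose constant is governed by $\Norm{\mu^{\om}}{p,B}\,\Norm{\nu^{\om}}{q,B}$ and whose effective exponent stays above $1$ exactly when $1/p+1/q<2/d$, an iteration to get on-diagonal decay above a random scale $N_1(x)$ supplied by the ergodic theorem, and Davies' perturbation with the $\cosh$-bound that is uniform in $\om$ precisely because $\sum_{y\sim x}\om(x,y)/\mu^{\om}(x)=1$ for the CSRW --- is the strategy of \cite{ADS16a} as described in the text (``Moser iteration combined with Davies' perturbation method''); whether the iteration is run \`a la Nash or \`a la Moser is immaterial, and your optimization of $-\la|x-y|+t(\cosh\la-1)$ correctly produces the two regimes.

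There is, however, one genuine gap in how you splice the two halves together. Writing $T^{\ps}_t=T^{\ps}_{t/2}T^{\ps}_{t/2}$ and invoking ``the on-diagonal bound at $x$ and its dual'' together with the $L^2(\mu^{\om})\to L^2(\mu^{\om})$ contraction of $T^{\ps}_t$ does not yield the perturbed off-diagonal estimate. The decomposition gives $q^{\om}(t,x,y)e^{\ps(y)-\ps(x)}\leq \Norm{q^{\ps}(t/2,x,\cdot)}{L^2(\mu^{\om})}\Norm{q^{\ps}(t/2,\cdot,y)}{L^2(\mu^{\om})}$, and $\Norm{q^{\ps}(t/2,x,\cdot)}{L^2(\mu^{\om})}^2=\sum_z q^{\om}(t/2,x,z)^2 e^{2(\ps(z)-\ps(x))}\mu^{\om}(z)$ is \emph{not} controlled by the unperturbed on-diagonal value $q^{\om}(t,x,x)$ times an $L^2\to L^2$ norm; what is needed is the $L^1\to L^2$ (equivalently $L^2\to L^\infty$) smoothing of the \emph{perturbed} semigroup. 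That smoothing has to be produced by re-running the Nash/Moser iteration for the Davies-transformed operator $e^{-\ps}\cL_Y^{\om}e^{\ps}$, carrying the zero-order term bounded by $\cosh\la-1$ through every step and checking that it does not spoil the dependence of the constants on the local averages of $\mu^{\om}$ and $\nu^{\om}$; this is exactly what the Moser iteration in \cite{ADS16a} is set up to do. A secondary point you should address once this is repaired: since the iteration is local, the smoothing bound at the endpoint $y$ requires control of the ergodic averages on balls around $y$ as well, which is consistent with the statement (which only imposes $\sqrt{t}\geq N_1(x)$) because in regime (i) one has $y\in B(x,c_6 t)$ with $t\geq N_1(x)$, so all averages can be taken inside one large ball centred at $x$, while in regime (ii) much cruder bounds suffice.
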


Here the random constants $N_1(x,\om)$, $x\in \bbZ^d$, represent the minimal scale such that the  ergodic averages $\|\mu^{\om}\|_{p,B(x,n)}$ and $\|\nu^{\om}\|_{q,B(x,n)}$ of conductances, taken over balls centred at $x$, can be controlled in terms of a deterministic quantity for $n\geq N_1(x,\om)$.
The proof of Theorem~\ref{thm:hkeCSRW} (and its extension in Theorem~\ref{thm:hkeVSRW} below) is based on a combination of Moser iteration technique with Davies' perturbation method, see e.g.\ \cite{Da89, Da93, CKS87}.

Clearly, it would be desirable to establish matching lower bounds.  It is well known that Gaussian lower and upper bounds on the heat kernel are equivalent to a parabolic Harnack inequality in many situations, for instance in the case of uniformly elliptic conductances, see \cite{De99}.   As mentioned above, under the assumptions of Theorem~\ref{thm:hkeCSRW} a parabolic Harnack inequality has been obtained in \cite{ADS16}.  However, due to the special structure of the Harnack constant in \cite{ADS16}, in particular its dependence on $\|\mu^{\om}\|_{p,B(x,n)}$ and $\|\nu^{\om}\|_{q,B(x,n)}$, one cannot directly deduce off-diagonal Gaussian lower bounds from it.  More precisely, in order to get effective Gaussian off-diagonal bounds using the established chaining argument (see e.g.\ \cite{Ba04}), one needs to apply the Harnack inequality on a number of balls with radius $n$ having a distance of order $n^2$.  In general, the ergodic theorem does not 
give the required uniform control on the convergence of space-averages of stationary random variables over such balls (see \cite{AJ75}). This can be resolved by imposing stronger moment conditions and additional mixing assumptions, under which a matching Gaussian lower bound has been shown in \cite{AH21}.

\medskip

Next we discuss upper bounds on the heat kernel $p^{\om}(t, x ,y)$ of the VSRW. For that purpose we need to introduce the distance $d_\om$ defined by
\begin{align} \label{eq:d_om}
  d_{\om}(x,y)
  \;\ldef\;
  \inf_{\gamma}
  \Bigg\{
    \sum_{i=0}^{l_{\gamma}-1} 1 \wedge \om(z_i,z_{i+1})^{-1/2}
  \Bigg\},
\end{align}
where the infimum is taken over all paths $\ga = (z_0, \ldots, z_{l_\gamma})$  connecting $x$ and $y$.  Note that $d_{\om}$ is a first passage percolation metric which is adapted to the transition rates of the random walk. In general, $d_{\om}$ can be identified with the intrinsic metric generated by the Dirichlet form associated with $\cL_{X}^{\om}$ and $X$, cf.\ \cite[Proposition~2.3]{ADS19}.
In particular, $d_\om$ is the metric that is expected to govern the off-diagonal heat kernel decay, cf.\ \cite{Da93a, Da93}. Further, notice that $d_\om$ is obviously bounded from above by the graph distance on $\bbZ^d$. In fact, $d_\om$ can become much smaller than the graph distance.

\begin{theorem} \label{thm:dist}
 Let $d\geq 2$ and suppose that Assumption~\ref{ass:environment} holds. Assume that there exists $p > (d-1)/2$ such that $\mean[\om(e)^p]<\infty$.
  Then, there exists $c_{10} > 0$, only depending on $\mean[\om(e)^p]$ such that the following holds. For every $x \in \bbZ^d$ and $\prob$-a.e.\ $\om$,  there exists $N_2(\om, x) < \infty$ such that  for any $y \in \bbZ^d$ with $|x-y| \geq N_2(\om, x)$,
  \begin{align} \label{eq:LBdist}
    d_\om(x,y)
    \;\geq\;
    c_{10}\, |x-y|^{1-\frac{d-1}{2p}}.
  \end{align}
\end{theorem}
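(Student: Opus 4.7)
The plan is to take any self-avoiding path $\gamma$ from $x$ to $y$, split its edges according to the size of $\om(e)$, and combine H\"older's inequality with the spatial ergodic theorem to bound the graph-length $l$ of $\gamma$ in terms of its $d_\om$-weight. Since the edge weights $W(e)\ldef 1\wedge\om(e)^{-1/2}$ are nonnegative, erasing loops only decreases the total weight, so the infimum in \eqref{eq:d_om} is unchanged by restricting to self-avoiding paths.

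Fix such $\gamma=(x=z_0,\ldots,z_l=y)$ of total weight $D\ldef w(\gamma)$, and partition its edge set into slow edges $\gamma_s\ldef\{e\in\gamma:\om(e)\leq 1\}$ (on which $W(e)=1$) and fast edges $\gamma_f\ldef\{e\in\gamma:\om(e)>1\}$ (on which $W(e)=\om(e)^{-1/2}$); write $l_s=|\gamma_s|$ and $l_f=|\gamma_f|$. Clearly $l_s\leq D$. For the fast edges, the identity $1=W(e)^{2p/(2p+1)}\,\om(e)^{p/(2p+1)}$ together with H\"older's inequality with conjugate exponents $(2p+1)/(2p)$ and $2p+1$ gives
\begin{equation*}
l_f\;\leq\;\biggl(\sum_{e\in\gamma_f}W(e)\biggr)^{\!\!2p/(2p+1)}\!\biggl(\sum_{e\in\gamma_f}\om(e)^p\biggr)^{\!\!1/(2p+1)}\;\leq\;D^{2p/(2p+1)}\biggl(\sum_{e\subset B(x,l)}\om(e)^p\biggr)^{\!\!1/(2p+1)},
\end{equation*}
where we used that, being self-avoiding and starting at $x$, the path $\gamma$ lies inside $B(x,l)$. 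Since $\mean[\om(e)^p]<\infty$, the spatial ergodic theorem produces, for every $x\in\bbZ^d$ and $\prob$-a.e.\ $\om$, a finite random scale $N_2(x,\om)$ and a constant $C_1=C_1(d,\mean[\om(e)^p])$ such that
\begin{equation*}
\sum_{e\subset B(x,n)}\om(e)^p\;\leq\; C_1\,n^d\qquad\text{for every }n\geq N_2(x,\om).
\end{equation*}
Using $l\geq|x-y|\geq N_2(x,\om)$, we infer
\begin{equation*}
l\;\leq\;l_s+l_f\;\leq\;D\,+\,C_1^{1/(2p+1)}\,D^{2p/(2p+1)}\,l^{d/(2p+1)}.
\end{equation*}
The assumption $p>(d-1)/2$ forces $d/(2p+1)<1$. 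If $l\leq 2D$, then $|x-y|\leq 2D$ and the bound is trivial (since $|x-y|^{1-(d-1)/(2p)}\leq|x-y|$). Otherwise $l>2D$, hence $l/2\leq C_1^{1/(2p+1)}D^{2p/(2p+1)}l^{d/(2p+1)}$, which rearranges to $l\leq C_2\,D^{2p/(2p+1-d)}$ for a constant $C_2$ depending only on $d$, $p$ and $\mean[\om(e)^p]$. Combining with $|x-y|\leq l$ and raising to the power $(2p+1-d)/(2p)=1-(d-1)/(2p)$ yields $D\geq c_{10}\,|x-y|^{1-(d-1)/(2p)}$, and taking the infimum over self-avoiding paths proves \eqref{eq:LBdist}.

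The essentially routine technical point is that the spatial ergodic theorem must provide a single random scale $N_2(x,\om)$ beyond which the bound $\sum_{e\subset B(x,n)}\om(e)^p\leq C_1 n^d$ holds simultaneously for \emph{every} $n\geq N_2(x,\om)$, not merely for a prescribed $n$. This is automatic because $n^{-d}\sum_{e\subset B(x,n)}\om(e)^p$ converges $\prob$-a.s.\ as $n\to\infty$ and is therefore eventually bounded by, say, twice its limit; one defines $N_2(x,\om)$ as the smallest scale beyond which this holds. Everything else in the argument is deterministic.
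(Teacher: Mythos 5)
Your argument is correct and is essentially the proof from \cite[Theorem~2.4]{ADS19} that the paper cites: a H\"older inequality along each path combined with the spatial ergodic theorem to control $\sum_{e\subset B(x,l)}\om(e)^p$ by $C_1 l^d$, which is exactly the ``relatively simple application of H\"older's inequality'' the survey alludes to. The only cosmetic difference is that the reference writes $1=\bigl(1\wedge\om(e)^{-1/2}\bigr)\bigl(1\vee\om(e)^{1/2}\bigr)$ and applies H\"older once to all edges, which absorbs your slow/fast split and the case distinction $l\lessgtr 2D$ into a single line; the exponents and the resulting bound are identical.
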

\begin{proof}
  See \cite[Theorem~2.4]{ADS19}.
\end{proof}

Despite the fact that Theorem~\ref{thm:dist} follows by a relatively simple application of H\"older's inequality, this lower bound turns out to be at least close to optimal within a general ergodic framework. In fact, there exists an ergodic environment built from layered conductances being constant along lines but independent between different lines, for which the lower bound in \eqref{eq:LBdist} is attained up to an arbitrarily small correction in the exponent, see  \cite[Theorem~2.5]{ADS19}.

\begin{theorem}[\cite{ADS19}] \label{thm:hkeVSRW}
  Let $d\geq 2$ and suppose that Assumption~\ref{ass:environment} holds. Assume there exist  $p,q \in (1, \infty]$ with $$ \frac  1  p +  \frac 1 q < \frac 2 d$$ such that condition $M(p,q)$ is satisfied.
 Then, there exist constants $c_i, \gamma \in (0,\infty)$, $i=11,\ldots, 15$, only depending on $d, p, q, \mean[\om(e)^p]$ and $\mean[\om(e)^{-q}]$, such that, for $\prob$-a.e.\ $\om$,  for all $x \in \bbZ^d$ there exists $N(x, \om) < \infty$ such that for any given $t$ with $\sqrt{t} \geq N(x)$ and all $y \in \bbZ^d$ the following hold.
  \begin{enumerate}
  \item [(i)] If $d_{\om}(x, y)\leq c_{11} t$ then
    \begin{align*}
      p^{\om}(t, x, y)
      \;\leq\;
      c_{12}\, t^{-d/2}\,
      \bigg( 1 + \frac{|x-y|}{\sqrt{t}} \bigg)^{\!\!\ga}\, 
      \exp\!\bigg( \!-\! c_{13}\, \frac{d_{\om}(x,y)^2}{t}\bigg).
    \end{align*}
  \item [(ii)] If $d_{\om}(x,y) \geq c_{15} t$ then
    \begin{align*}
      p^{\om}(t, x, y)
      \;\leq\;
      c_{12}\,  
      \bigg( 1 + \frac{|x-y|}{\sqrt{t}} \bigg)^{\!\!\ga}\,
      \exp\!\bigg( 
        \!-\! c_{14}\, d_{\om}(x,y) 
        \bigg(1 \vee \log \frac{d_{\om}(x,y)}{t}\bigg)
      \bigg).
    \end{align*}
  \end{enumerate}
\end{theorem}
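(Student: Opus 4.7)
My approach would combine a Moser iteration for the on-diagonal bound with Davies' perturbation method to extract the off-diagonal decay, mirroring the scheme behind Theorem~\ref{thm:hkeCSRW} but with the intrinsic metric $d_\om$ of the VSRW's Dirichlet form in place of the graph distance. First I would establish the on-diagonal estimate $p^\om(t,x,x) \leq c\, t^{-d/2}$ for $\sqrt{t} \geq N(x,\om)$. This follows from Moser iteration on the parabolic equation $\partial_t u = \cL_X^\om u$, using a weighted Sobolev/Nash inequality on balls centred at $x$ whose constants involve $\Norm{\mu^\om}{p,B}$ and $\Norm{\nu^\om}{q,B}$. Under $1/p+1/q<2/d$ these weights are controlled, via Proposition~\ref{prop:krengel_pyke}, by their deterministic expectations once the radius exceeds a random scale $N(x,\om)$.

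Next I would implement Davies' perturbation. For a bounded function $\psi\colon\bbZ^d\to\bbR$, the conjugated kernel $p^\om_\psi(t,x,y) \ldef e^{\psi(y)-\psi(x)} p^\om(t,x,y)$ is generated by $P_t^\psi = e^{-\psi} P_t e^\psi$, whose Dirichlet form is perturbed by an amount controlled by
\[
h_\om(\psi) \;\ldef\; \sup_{z\in\bbZ^d}\ e^{-2\psi(z)} \sum_{z'\sim z} \om(z,z')\bigl(e^{\psi(z')}-e^{\psi(z)}\bigr)^{\!2}.
\]
Running the same Moser iteration on the perturbed parabolic equation yields $p^\om_\psi(t,x,x) \leq c\, t^{-d/2} e^{h_\om(\psi)\, t}$ on scales $\sqrt{t}\geq N(x,\om)$, and the analogous bound at $y$ for the dual kernel. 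A Cauchy-Schwarz step at time $t/2$ together with undoing the conjugation then produces
\[
p^\om(t,x,y) \;\leq\; c\, t^{-d/2}\, \exp\!\bigl(h_\om(\psi)\, t \,-\, (\psi(y)-\psi(x))\bigr).
\]

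To exploit this, take $\psi(z) \ldef \lambda\, d_\om(x,z)$ with $\lambda>0$. The definition \eqref{eq:d_om} yields $|\psi(z')-\psi(z)| \leq \lambda(1\wedge\om(z,z')^{-1/2})$ for $z\sim z'$, from which a direct computation gives
\[
\om(z,z')\bigl(e^{\psi(z')-\psi(z)}-1\bigr)^{\!2} \;\leq\; C\, \lambda^2 e^{2\lambda},
\]
whence $h_\om(\psi) \leq C'\lambda^2 e^{2\lambda}$. Optimising over $\lambda$ then produces the two advertised regimes: in the near regime $d_\om(x,y)\leq c_{11}t$ the choice $\lambda \asymp d_\om(x,y)/t$ yields the Gaussian exponent $-c\, d_\om(x,y)^2/t$, while in the far regime $d_\om(x,y)\geq c_{15}t$ the choice $\lambda \asymp \log(d_\om(x,y)/t)$ produces the Poissonian exponent $-c\, d_\om(x,y)\log(d_\om(x,y)/t)$. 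The polynomial prefactor $(1+|x-y|/\sqrt{t})^\ga$ originates from iterating the weighted maximal inequality of Theorem~\ref{thm:MI} over nested balls to upgrade the pointwise on-diagonal bound to one that remains uniform over a region of graph-diameter $|x-y|$.

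The main obstacle, in my view, is the interplay of two random length scales: the intrinsic first-passage distance $d_\om$, which by Theorem~\ref{thm:dist} can be much smaller than the graph distance $|x-y|$, and the minimal ergodic scale $N(x,\om)$ above which the weighted Nash/Sobolev inequalities become effective. The Davies argument delivers decay in $d_\om$, but the on-diagonal prefactor and the polynomial correction $(1+|x-y|/\sqrt{t})^\ga$ are driven by $|x-y|$, so reconciling the two requires a careful chaining through overlapping balls whose side-lengths exceed $N(\cdot,\om)$ everywhere along the path from $x$ to $y$, with $h_\om(\psi)$ controlled uniformly. It is at this point that the stronger moment condition $1/p+1/q<2/d$, rather than the QFCLT condition $1/p+1/q<2/(d-1)$, becomes essential.
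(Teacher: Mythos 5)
Your proposal is correct and follows essentially the same route as the cited proof: the paper itself only points to \cite{ADS19} and describes the argument as a combination of Moser iteration (for the on-diagonal/near-diagonal bound, with Nash--Sobolev constants controlled by the ergodic averages $\Norm{\mu^\om}{p,B}$, $\Norm{\nu^\om}{q,B}$ beyond a random scale) with Davies' perturbation method run in the intrinsic metric $d_\om$, which is exactly what you do. Your key observation that $\psi=\lambda d_\om(x,\cdot)$ makes the Davies term $h_\om(\psi)\leq C\lambda^2 e^{2\lambda}$ uniformly bounded despite unbounded conductances, and your account of the origin of the prefactor $(1+|x-y|/\sqrt{t})^\gamma$ as the price for uniformizing the random scales over points near $y$, both match the actual argument.
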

\begin{remark} \label{rem:hke}
  (i) In \cite{ADS16a, ADS19} the heat kernel bounds in Theorems~\ref{thm:hkeCSRW} and \ref{thm:hkeVSRW} are stated for random walks on a general class of graphs including random graphs such as supercritical percolation clusters with long-range correlations, cf.\ Remark~\ref{rem:qip}-(iii) above. For simple random walks on such graphs two-sided Gaussian heat kernel bounds and a parabolic Harnack inequality have been shown in \cite{Sa17}.

  (ii) If the distance $d_{\om}$ and the  Euclidean distance are comparable,  the estimates in Theorem~\ref{thm:hkeVSRW} turn into Gaussian upper bounds since then the additional term $(1+ |x-y|/\sqrt{t})^{\ga}$ can  be absorbed by the exponential term into a constant. Both distances are comparable, for instance,  under i.i.d.\ conductances (cf.\ \cite[Lemma~4.2]{BD10}) or for a class  of models with long-range correlations on $\bbZ^d$  including discrete Gaussian free fields, Ginzburg-Landau $\nabla \phi$-interface models or random interlacements, see \cite{AP24} for details.   However, if both distances are not comparable, the bounds in Theorem~\ref{thm:hkeVSRW} become ineffective in the regime where $d_{\om}(x,y) < \sqrt{t} < |x-y|$, since in this case the term $(1 + |x-y|/\sqrt{t})^{\ga}$ may become large while the exponential term does not provide a decay yet.  

  (iii) The on-diagonal decay $t^{-d/2}$  corresponds to $1/\big|B(x,\sqrt{t})\big|$. In general we expect a stronger decay to hold resulting from the volume of a ball with radius $\sqrt{t}$ w.r.t.\ the distance $d_\om$. For instance, the heat kernel of the aforementioned random walk among random layered conductances admits stronger on-diagonal decay, see \cite{DF20}.
\end{remark}

Closely related are similar questions about scaling limits and estimates for the associated Green's function. In dimension $d \geq 3$ the behaviour of the Green's function on the full space, defined by $g^{\omega}(x,y) \ldef \int_{0}^{\infty} p_{t}^{\omega}(x, y)\, dt$ is quite well understood. Precise estimates and asymptotics in case of general non-negative i.i.d.\ conductances have been shown in \cite[Theorem~1.2]{ABDH13} and a local limit theorem for $g^{\omega}$ in the case of ergodic conductances under moment conditions in \cite[Theorem~1.14]{ADS16} (cf.also \cite[Theorem~5.2]{Ge20}). In dimension $d = 2$,  precise asymptotics for the associated potential kernel as well as on-diagonal asymptotics and near-diagonal estimates on the Green's function with Dirichlet boundary conditions were obtained in \cite{ADS20}.

\section{Random conductance models with time-dynamic conductances}\label{sec:dynamic}
In this section we discuss homogenization of the dynamic RCM evolving in a time-varying random environment. To define the model, we endow the graph $(\bbZ^d, E_d)$  with a family of time-dependent non-negative conductances $\om = \{\om_t(e) : e \in E_d,\, t \in \bbR \}$.   Let $\Om$ be the set of measurable functions from $\bbR$ to $[0,\infty)^{E_d}$ equipped with a $\si$-algebra $\cF$ and let $\prob$ be a probability measure on $(\Om, \cF)$.   On $\Om$ we define the $d+1$-parameter group of translations $(\tau_{t,x} : (t,x)\in \bbR \times \bbZ^d)$ by
\begin{align*} 
  \tau_{t,x}\!:\Om \rightarrow \Om,
  \qquad
  \big\{\om_s(e) : (s,e) \in \bbR \times E_d\big\}
  \;\longmapsto\;
  \big\{\om_{t+s}(e+x) : (s,e) \in \bbR \times E_d\big\}.
\end{align*}
\begin{assumption}\label{ass:P}
  \begin{enumerate}[(i)]
  \item $\prob$ is ergodic and stationary with respect to space-time shifts, that is, for all $x \in \bbZ^d$, $t\in \bbR$,  $\prob \circ\, \tau_{t,x}^{-1} \!= \prob$, and $\prob[A] \in \{0,1\}\,$ for any $A \in \cF$ such that $\prob[A \triangle \tau_{t,x}(A)] = 0$ for all $x \in \bbZ^d$, $t\in \bbR$.
  
  \item For every $A \in \cF$ the mapping $(\om,t,x)\mapsto \indicator_A(\tau_{t,x}\om)$ is jointly measurable with respect to the $\si$-algebra $\cF \otimes \cB(\bbR)\otimes \cP(\bbZ^d)$.

  \end{enumerate}
\end{assumption}
For a given $\om \in \Om$ and for $s \in \bbR$ and $x \in \bbZ^d$, let $\Prob_{s,x}^{\om}$ be the probability measure on the space of $\bbZ^d$-valued c\`{a}dl\`{a}g functions on $\bbR$, under which the coordinate process $X = (X_t : t \in \bbR)$ is the time-inhomogeneous Markov process on $\bbZ^d$ starting in $x$ at time $s$ with time-dependent generator acting on  functions $f\!: \bbZ^d \to \bbR$ as
\begin{align*}
  \big(\cL_t^{\om} f\big)(x)
  \;=\;
  \sum_{y: |x-y|=1}\mspace{-6mu} \om_t(\{x, y\}) \, \big(f(y) \,-\, f(x)\big).
\end{align*}
In other words, $X$ is the continuous-time random walk with time-dependent jump rates given by the conductances, i.e.\ the random walk $X$ chooses its next position at random proportionally to the conductances.  Note that the total jump rate out of any lattice site is not normalised, and the law of the sojourn time of $X$ depends on its time-space position, so $X$ is a time-dynamic version of the VSRW having the counting measure as a time-independent invariant measure. For $x,y \in \bbZ^d$ and $t\geq s$, we denote $p^{\om}(s,x;t,y)$ the heat kernel of $(X_t : t \geq s )$, that is
\begin{align*}
  p^{\om}(s,x;t,y)
  \;\ldef\;
  \Prob_{s,x}^{\om}\big[X_t = y\big].
\end{align*}

In order to construct this Markov process under the law $P^\om_{s,x}$, we specify its jump times $s < J_1 < J_2 < \ldots $ inductively. For this purpose, let $\{E_{k} : k\geq 1 \}$ be a sequence of independent $\mathop{\mathrm{Exp}}(1)$-distributed random variables, and set $J_0 = s$ and $X_s = x$. Suppose that for any $k \geq 1$ the process $X$ is constructed on $[s, J_{k-1}]$.  Then, $J_k$ is given by
\begin{align*}
  J_k
  \;=\; J_{k-1} +
  \inf\Big\{%
    t \geq 0 \,:\,
    \int_{J_{k-1}}^{J_{k-1}+t}\! \mu_s^\om(X_{J_{k-1}}) \, \md s \geq E_k
  \Big\},
\end{align*}
and at the jump time $t = J_k$ the random walk $X$ jumps according to the transition probabilities $\{\om_t(X_{J_{k-1}},y)/\mu^\om_t(X_{J_{k-1}}), \, y\sim X_{J_{k-1}}\}$ where we write $\mu^\om_t(x)=\sum_{y\sim x} \om_t(x,y)$, $x\in \bbZ^d$, $t\in \bbR$.
A representative example of the above setting is the VSRW
on dynamical bond percolation on $\bbZ^d$, introduced in \cite{PSS15}.  In this case $\om_t(e)$ is, for each $e\in E_d$, an
independent copy of a stationary continuous-time process taking values in $\{0, 1\}$ with joint invariant measure given by the product of Bernoulli distributions with some prescribed parameter $p \in (0, 1)$. We interpret $\om_t(e) = 1$ as the event that edge $e$ is occupied at time $t$, and $\om_t(e)= 0$ as the event that edge $e$ is vacant. The random walk then jumps at rate $1$ across edges adjacent to its current position that are occupied at that instant of time. If the site where the walk is located has no adjacent occupied edges, then the walk does not move.

For the time-dynamic RCM a QFCLT has been derived under mixing and uniform ellipticity conditions in \cite{An14}. Here we will state a QFCLT under the following significantly weaker assumptions.

\begin{assumption}\label{ass:momentDyn}
  Suppose that either of the following conditions hold.
  \begin{enumerate}[(i)]
    \item There exist $p, q \in (1, \infty]$ satisfying
    \begin{align*}
      \frac{1}{p-1} \,+\, \frac{1}{(p-1) q} \,+\, \frac{1}{q}
      \;<\;
      \frac{2}{d}
    \end{align*}
    such that for any $e \in E_d$ and $t \in \bbR$,
    \begin{align*}
      \mean\!\big[\om_t(e)^p\big] \;<\; \infty
      \quad \text{and} \quad
      \mean\!\big[\om_t(e)^{-q}\big] \;<\; \infty.
    \end{align*} 
    
    \item $t \mapsto  \om_t(e)$ obeys
  \begin{align*}
  \om_t(e) \in [0, 1]
  \end{align*}
  for each $e \in E_d$ and each $t \in \bbR$ and, denoting
  \begin{align*}
    T_e \ldef \inf\bigg\{ t\geq 0: \int_0^t \om_s(e) \, ds \geq 1 \bigg\}, \qquad e\in E_d,
  \end{align*}
 we have
 \begin{align*}
  \exists \vartheta > 4d: \quad \mean\!\big[T_e^\vartheta\big]\; < \;\infty,\qquad  e \in E_d.
 \end{align*}
  \end{enumerate}
  
\end{assumption}


Note that Assumption~\ref{ass:momentDyn}-(ii) allows  $\om_t(e) = 0$ with positive probability, which is ruled out by the moment condition on $1/\om_t(e)$ in Assumption~\ref{ass:momentDyn}-(i). The assumption constitutes a control on the amount of time the conductances may become zero. Note that no condition of the form $\prob(\om_t(e)>0)>p_c$ is needed here. 
Moreover, in contrast to many results on various models for random walks in dynamic random environments, we do not assume the environment to be uniformly elliptic or mixing or Markovian in time and we also do not require any regularity with respect to the time parameter.
\begin{theorem}[QFCLT]\label{thm:dyn_ip}
  Let $d\geq 2$ and suppose that Assumptions~\ref{ass:P} and \ref{ass:momentDyn} hold.  Then, for $\prob$-a.e.\ $\om$, the process $X^{(n)} = \big( n^{-1} X_{n^2 t}: t \geq 0 \big)$ converges, under $\Prob_{\!0,0}^\om$, in law towards a Brownian motion on $\bbR^d$ with a deterministic non-degenerate covariance matrix $\Si^2$.
\end{theorem}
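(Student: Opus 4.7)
The plan is to adapt the corrector method from Section~\ref{sec:proofCLT} to the parabolic setting. First I would construct a space-time corrector $\chi\colon \Om\times\bbR\times\bbZ^d \to \bbR^d$ that satisfies a space-time cocycle property $\chi(\tau_{s,x}\om, t-s, y-x) = \chi(\om,t,y) - \chi(\om,s,x)$ and is such that $\Phi(\om,t,x) \ldef x - \chi(\om,t,x)$ is \emph{parabolically} harmonic, i.e.\ $(\partial_t + \cL_t^\om)\Phi \equiv 0$ in a weak sense. Under Assumption~\ref{ass:momentDyn}-(i) the condition $\mean[\om_0(e)]<\infty$ ensures that the position field $\Pi_j$ lies in an appropriate weighted Hilbert space of space-time cocycles, so one can imitate the Helmholtz-type decomposition of \cite{MP07} in this parabolic space. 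Under Assumption~\ref{ass:momentDyn}-(ii), where $\om_t(e)$ may vanish with positive probability, I would regularise by working with $\om_t(e)+\varepsilon$, construct an approximate corrector $\chi^\varepsilon$, and pass to the limit $\varepsilon \downarrow 0$ using the tail bound on $T_e$ to secure the required compactness.

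With $\chi$ in hand, $M_t \ldef X_t - \chi(\om,t,X_t)$ is a $\Prob_{0,0}^\om$-martingale, and a QFCLT for $M^{(n)} \ldef n^{-1}M_{n^2\cdot}$ follows from Helland's martingale CLT essentially verbatim from Proposition~\ref{prop:mconv}, with the space-time ergodic theorem replacing the purely spatial one: the process $\eta_t \ldef \tau_{t, X_t}\om$ is a Markov process on $\Om$ that inherits stationarity and ergodicity from the space-time stationary $\prob$, and the quadratic variation of $v\cdot M^{(n)}$ converges to
\begin{align*}
  t\, \mean\!\Big[{\textstyle \sum_{y\sim 0}}\; \om_0(0,y)\,\bigl(v\cdot\Phi(\om,0,y)\bigr)^{2}\Big].
\end{align*}
Non-degeneracy of the limiting covariance is obtained from the variational characterisation of $\chi$, as in \cite{Bi11}. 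It then remains to show the diffusive sublinearity $\sup_{t \leq T} n^{-1}|\chi(\om,n^2 t, X_{n^2 t})| \to 0$ in $\Prob_{0,0}^\om$-probability, which by a standard Chebyshev / heat-kernel argument reduces to an everywhere-sublinearity statement $\max_{(t,x) \in [0,Tn^2] \times B(0, Ln)} n^{-1}|\chi(\om, t, x)| \to 0$ for $\prob$-a.e.\ $\om$.

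This sublinearity is established in the standard two-step fashion: first a parabolic $\ell^1$-sublinearity over space-time cylinders via a parabolic Poincar\'e inequality, the cocycle approximation $\chi_j = \lim_k \mD\phi_{j,k}$, and a Krengel--Pyke-type space-time ergodic theorem, paralleling Proposition~\ref{prop:l1_sublin}; then an upgrade to $\ell^\infty$-sublinearity by applying a parabolic maximal inequality to the $(\partial_t+\cL^\om_t)$-harmonic functions $u^{s,z}_j(\om,t,x) \ldef \chi_j(\om,t,x) - e_j\cdot(x-z)$ on cylinders of the form $(s, s+(n/m)^2] \times B(z, n/m)$, combined with the covering argument of Proposition~\ref{prop:sublin_corr}. \emph{The main obstacle is this parabolic maximal inequality.} Its elliptic counterpart (Theorem~\ref{thm:MI}) needs $\frac1p+\frac1q<\frac{2}{d-1}$; in the parabolic case one iterates weighted Sobolev inequalities with weights $\mu^\om_t$ and $\nu^\om_t$ in parabolic effective dimension $d+2$, and a careful bookkeeping of the H\"older and Sobolev exponents through a Moser (or De~Giorgi) scheme, analogous to \cite[Theorem~2.7]{ACS21}, yields precisely the asymmetric condition $\frac{1}{p-1}+\frac{1}{(p-1)q}+\frac1q<\frac2d$ of Assumption~\ref{ass:momentDyn}-(i). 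Under Assumption~\ref{ass:momentDyn}-(ii) a direct Moser iteration breaks down since $\om_t(e)$ may vanish; here I would instead work with the time-averaged conductances $\bar\om^\delta_t(e) \ldef \delta^{-1}\int_t^{t+\delta}\om_s(e)\,\md s$, use the bound $\mean[T_e^\vartheta]<\infty$ with $\vartheta>4d$ to obtain uniform-in-$e$ tail control on the times needed for $\bar\om^\delta$ to become non-degenerate, and transfer sublinearity of the corrector for the averaged model back to $\chi$ via an occupation-time concentration estimate for $X$; the threshold $\vartheta>4d$ is what makes this transfer step quantitatively work.
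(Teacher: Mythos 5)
The paper does not actually prove Theorem~\ref{thm:dyn_ip}; it defers entirely to \cite{ACDS18} for case (i) and \cite{BR18} for case (ii), remarking only that the proofs are ``enhanced, but technically involved versions'' of the static argument of Section~\ref{sec:proofCLT}. Measured against that, your sketch for case (i) is essentially the strategy of \cite{ACDS18}: a space-time corrector with a space-time cocycle property making $\Phi(\om,t,x)=x-\chi(\om,t,x)$ caloric, the martingale FCLT via Helland applied to the space-time environment process $\tau_{t,X_t}\om$ (which is stationary and ergodic because the counting measure is invariant for the time-inhomogeneous symmetric walk), and an $\ell^1$-to-$\ell^\infty$ upgrade of sublinearity on space-time cylinders via a parabolic maximal inequality of the type in \cite[Theorem~2.7]{ACS21}, which is where the asymmetric exponent condition in Assumption~\ref{ass:momentDyn}-(i) originates. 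One point you pass over too quickly is the construction of the caloric corrector itself: the term $\partial_t\Phi$ has no natural place in a naive transplant of the $L^2_{\mathrm{cov}}$ Helmholtz decomposition of \cite{MP07}, and making the decomposition work in the parabolic setting (so that $\Phi$ is genuinely caloric and not merely harmonic at each fixed time) is one of the genuinely technical steps of \cite{ACDS18}, not a ``verbatim'' adaptation.

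For case (ii) there is a real gap. Under Assumption~\ref{ass:momentDyn}-(ii) one has $\nu^\om_t(x)=\infty$ with positive probability, so no parabolic maximal inequality of Moser type in the spirit of Theorem~\ref{thm:MI} is available, and your proposed repair---prove sublinearity for the corrector of a time-averaged environment $\bar\om^\delta$ and ``transfer'' it back to $\chi$ via occupation-time concentration---does not obviously close: the corrector is defined through a global orthogonal projection, so the correctors of $\om$ and of $\bar\om^\delta$ (or of $\om+\ve$) are different objects with no pointwise comparison, and nothing in the sketch controls their difference on the relevant scale; moreover the walk generated by $\om$ and the walk generated by $\bar\om^\delta$ are different processes, so an occupation-time estimate for $X$ does not by itself convert a statement about one corrector into a statement about the other. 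The argument of \cite{BR18} is structured differently: it exploits the deterministic upper bound $\om_t(e)\le 1$ (which gives uniform diffusive control on the walk) together with the moment bound on $T_e$ to run a coarse-graining/percolation-type argument that circumvents everywhere-sublinearity and the maximal inequality altogether, in the spirit of the references in Remark~\ref{rem:sublinear}. So your case (i) outline is faithful to the cited proof, while your case (ii) outline identifies the right difficulty but proposes a resolution that is not the one used and whose key transfer step is unjustified.
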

\begin{proof}
  This has been shown in \cite{ACDS18} under Assumption~\ref{ass:momentDyn}-(i)  and in \cite{BR18} under Assumption~\ref{ass:momentDyn}-(ii). 
\end{proof}
The approach for the proofs is largely based on enhanced, but technically involved versions of the methods used for time-static enviroments outlined in Section~\ref{sec:proofCLT}.
Under similar conditions, for one-dimensional random walk under time-dependent degenerate conductances QFCLTs have been derived in \cite{DS16, BP23}, and under the minimal moment assumption of finite first positive and negative moments in \cite{Bi19}.

Under the same moment condition as in Assumption~\ref{ass:momentDyn}-(i) a quenched local limit theorem has also been established.

\begin{theorem}[Quenched local CLT \cite{ACS21}] \label{thm:dyn_lclt} 
  Suppose that Assumptions~\ref{ass:P} and \ref{ass:momentDyn}-(i) hold.  For any $T_2> T_1 > 0$ and $K > 0$,
  \begin{align*}
    \lim_{n \to \infty} \sup_{|x|\leq K} \sup_{ t\in [T_1, T_2]}
    \big| n^d \, p^{\om}(0, 0; n^2 t, \lfloor nx \rfloor) - \bar p^\Sigma(t,0,x) \big|
    \;=\;
    0,
    \qquad \text{for }\prob\text{-a.e. }\om,
  \end{align*}
 where $\Sigma^2$ is the covariance matrix appearing in the limit process in Theorem~\ref{thm:dyn_ip}.
\end{theorem}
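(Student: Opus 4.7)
My plan is to follow the Barlow--Hambly strategy already used for the static ergodic case, adapted to the time-inhomogeneous setting. The two essential ingredients are the QFCLT from Theorem~\ref{thm:dyn_ip}, which identifies the weak limit of the rescaled walk, and a quantitative regularity estimate for solutions of the discrete parabolic equation $\partial_t u - \cL_t^{\om} u = 0$ on large space-time scales, which upgrades weak convergence of the averaged measures to locally uniform pointwise convergence of their densities. Write $u_n(t,x) \ldef n^d\, p^\om(0,0;n^2 t, \lfloor nx \rfloor)$ for the candidate functions.

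\textbf{Step 1 (parabolic regularity).} I would establish an oscillation inequality for non-negative space-time caloric functions of $\cL_t^\om$: for $\prob$-a.e.\ $\om$ there exist $\rho \in (0,1)$ and $\alpha \in (0,1)$ such that, for all sufficiently large $n$ and every such $u$ on a parabolic cylinder $(s-n^2, s]\times B(x_0, n)$, the oscillation of $u$ on a concentric cylinder shrunk by a fixed factor is bounded by $\rho$ times the oscillation on the larger one, with constants depending on the space-time averages $\Norm{\mu_\cdot^\om}{p,Q}$ and $\Norm{1/\om_\cdot(\cdot)}{q,Q}$. This is the content of \cite[Theorem~2.7]{ACS21} and is proved by a parabolic Moser iteration: one alternates space-time Sobolev and Poincaré inequalities on $\bbZ^d$ with Hölder's inequality, absorbing the ergodic averages of the conductances in a way that closes precisely under the relation $\tfrac{1}{p-1}+\tfrac{1}{(p-1)q}+\tfrac{1}{q}<\tfrac{2}{d}$ in Assumption~\ref{ass:momentDyn}-(i). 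The space-time version of Proposition~\ref{prop:krengel_pyke}, applied under Assumption~\ref{ass:P}, then makes these averages deterministically controlled on balls of radius at least $N(\om)$, and iteration of the oscillation bound across dyadic scales yields a joint Hölder estimate for $(t,x)\mapsto u_n(t,x)$ on any compact subset of $(0,\infty)\times \bbR^d$.

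\textbf{Step 2 (identification of the limit).} For any continuous test function $\ph\!: \bbR^d\to \bbR$ with compact support, Theorem~\ref{thm:dyn_ip} gives, for each fixed $t>0$ and $\prob$-a.e.\ $\om$,
\begin{align*}
 n^{-d}\sum_{y\in \bbZ^d} u_n(t, y/n)\, \ph(y/n)
 \;=\; \Mean_{0,0}^\om\!\big[\ph(X_{n^2 t}/n)\big]
 \;\xrightarrow[n\to\infty]{}\;
 \int_{\bbR^d} \bar p^\Si(t,0,x)\, \ph(x)\, \md x.
\end{align*}
The left-hand side is a Riemann sum of $u_n(t,\cdot)\,\ph$ over the lattice $n^{-1}\bbZ^d$. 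Combined with the Hölder equicontinuity of the family $\{u_n\}$ from Step~1 and a uniform bound (from the mean-value part of the Moser iteration applied to a slightly larger cylinder), a standard Arzelà--Ascoli/compactness argument forces any subsequential pointwise limit of $u_n(t,\cdot)$ to be a continuous function that integrates against every $\ph$ to the same value as $\bar p^\Si(t,0,\cdot)$, hence coincides with it. Uniqueness of the limit promotes this to convergence of the full sequence, and uniformity in $(t,x)$ on compacts of $(0,\infty)\times \bbR^d$ follows from the joint Hölder estimate.

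\textbf{Step 3 (main obstacle).} The principal technical difficulty is Step~1: the time-inhomogeneity forces the Moser scheme to be run on parabolic cylinders with weights that are only $L^p$ in space and in time, and no pointwise regularity of $\om_t(e)$ in $t$ is available. The key device, following \cite{ACS21}, is to integrate out the time variable first to obtain an auxiliary energy inequality, then apply the spatial Sobolev--Poincaré inequality, and finally use Hölder's inequality to split the resulting weighted norm into factors controlled respectively by the $p$- and $q$-averages of $\mu_t^\om$ and $1/\om_t$; the book-keeping of exponents is what produces the precise moment condition in Assumption~\ref{ass:momentDyn}-(i). Once the oscillation inequality holds with deterministic constants on scales above $N(\om)$ provided by the space-time ergodic theorem, the remaining arguments in Step~2 are essentially formal.
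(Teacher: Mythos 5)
Your outline is correct and coincides with the strategy the paper itself attributes to \cite{ACS21} (and, in the static case, to \cite{BH09}): combine the QFCLT of Theorem~\ref{thm:dyn_ip} with a H\"older/oscillation estimate for space-time caloric functions obtained by a parabolic Moser iteration under Assumption~\ref{ass:momentDyn}-(i), with the space-time ergodic theorem supplying deterministic control of the conductance averages above a random scale. The paper gives no independent proof beyond this citation and description, so there is nothing further to compare.
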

Since the static RCM is naturally included in the time-dynamic model, the moment condition in Assumption~\ref{ass:momentDyn} is not optimal for both the QFCLT and the local limit theorem.

Relevant examples for dynamic RCMs include random walks in an environment generated by some interacting particle systems like zero-range or exclusion processes, cf.\ \cite{MO16}, where also some on-diagonal heat kernel upper bounds for a degenerate time-dependent conductances model are obtained.  Full two-sided Gaussian estimates are known in the uniformly elliptic case for the VSRW \cite{DD05} or for constant speed walks under effectively non-decreasing conductances \cite{DHZ19}.  However, unlike for static environments, two-sided Gaussian heat kernel bounds are much less regular and some pathologies may arise as they are not stable under perturbations, see \cite{HK16}. Moreover, such bounds are expected to be governed by a time-dynamic version of the intrinsic distance $d_\om$ (cf.\ \eqref{eq:d_om} above) whose exact form in a degenerate setting is unknown. These facts make the derivation of Gaussian bounds for the dynamic RCM with unbounded conductances a subtle open challenge.

\medskip

Random conductance models, in particular their heat kernels and Green's functions,  appear, somewhat unexpectedly, in representations for the correlation functions of various prominent models in modern statistical mechanics.  One prime example is the Ginzburg-Landau $\nabla \phi$ interface model describing, in a particular idealization, an interface between two pure thermodynamical phases; see \cite{Fu05} for a survey.  Here the so-called Helffer-Sj\"{o}strand representation allows to cast the space-time covariances of the height of the surface for convex potentials in terms of the heat kernel of a dynamic RCM (cf.\ \cite{DD05,GOS01}).
However, applications of homogenization results such as FCLTs and local limit theorems in statistical mechanics often require convergence or estimates under the \emph{annealed} measure,  i.e.\ averaged over the law of the environment.  Moreover,  various techniques in quantitative stochastic homogenization theory (cf.\ e.g. \cite{AKM19, GNO15}) rely on annealed estimates.

On one hand,  a QFCLT does imply an annealed FCLT in general.  On the other hand,  the same does not apply to local limit theorems or Gaussian-type estimates for the annealed (or averaged) heat kernel. While the aforementioned analytic Moser and De~Giorgi iteration techniques perform effectively to produce quenched results, one can only derive annealed results from them under stronger and non-explicit moment conditions, see \cite{AT21} for some results in this direction.

In \cite{BDKY15} Benjamini, Duminil-Copin, Kozma and Yadin proposed  a different approach based on a rather
robust entropy method for discrete time random walks on static supercritical percolation clusters. They obtained very sharp annealed results for the discrete gradient of the corresponding heat kernel, which they used in order to prove a Liouville principle for sublinear harmonic functions. 
Recently,  in \cite{DKS23} the entropy method has been adapted by Deuschel, Kumagai and Slowik  to nearest-neighbour RCMs  with time-dependent conductances that are bounded from below but unbounded from above.  Under the minimal moment condition $\mean[\om_t(e)]<\infty$ they obtain sharp, scaling invariant annealed on-diagonal estimates for the first and second discrete derivative of the heat kernel,
which they then use  to prove a local limit theorem for the annealed heat kernel and its discrete first derivative as well as optimal decay rates for the annealed Green's function and its derivatives.
Similar results have been obtained in \cite{DD05, MO15} for  uniformly elliptic conductances.
In quantitative stochastic homogenization theory such annealed estimates can be used to control the random part of the homogenization error,  see \cite{MO15}, or to derive quantitative versions of central limit theorems in form of a Berry-Esseen theorem, cf.\ \cite{AN19}.

Finally, let us briefly mention that recent years also witnessed progress in the understanding of certain non-reversible random walks in random environment. One of these are so-called balanced random walks, where the  walk itself is a martingale in every environment. The key technical issue for these is the existence, and control, of the invariant measure from the environment as seen from the particle. In various settings of both static and time-dynamic environments, invariance principles, local limit theorem and Harnack inequalities have been derived in  \cite{La82,GZ12, BD14,DGR18,DG22, BCDG22,BC22}. 
Another class of treatable jumping mechanisms are random walks in divergence-free (a.k.a.\ doubly stochastic) random environments.  These are exactly the walks for which the underlying shift-invariant law of the environment remains invariant for the environment as seen from the particle.  Under a natural integrability condition of the (static) environment, an annealed FCLT  has been established in \cite{KT17}, and a  QFCLT was subsequently shown in \cite{To18}.
This problem has been studied also in the case when the said integrability condition fails, which leads to superdiffusive behaviour \cite{LTV18,CH-ST22}.

\section{Random conductance models with long-range jumps} \label{sec:longrange}
 In the previous sections our discussion was restricted to RCMs with nearest-neighbour jumps. However, in recent years significant progress has been made in understanding the RCM on $\mathbb{Z}^d$ in the presence of long-range edges. A particular class of resulting underlying (random) graphs is the long-range percolation model in which, for any $x,y\in \mathbb{Z}^d$, an edge between $x$ and $y$ is present with a probability proportional to $|x-y|^{-(d+\alpha)}$, $\alpha>0$. For $\alpha \in (0,2)$, the simple random walk on such graphs is known to be outside the Gaussian universality class, that is the scaling limit is a symmetric $\alpha$-stable L\'{e}vy process instead of Brownian motion, see  \cite{CS13,BT24}, and the heat kernel exhibits non-Gaussian decay \cite{CS12}.
In this regime, similar results including scaling limits, heat kernel bounds and local limit theorems have been derived in \cite{CKW20, CKW21} for RCMs with i.i.d.\ conductances satisfying certain moment conditions.
In \cite{CCK22}, on-diagonal heat kernel bounds were shown and the associated spectral dimensions were identified for simple random walks on long-range percolation graphs in various regimes.

In this section, our discussion will be focused on  a class of random conductance models between the nearest-neighbour and the $\al$-stable regime  ($\alpha> 2$, $d\geq 2$), where the random walk admits long-range jumps but still exhibits a Gaussian large-scale behaviour due to a finite first moment condition on the jump size biased, ergodic random conductances. 

Consider a graph with vertex set $\bbZ^d$, $d \geq 2$, and edge set $\bar{E} = \{\{x,y\} : x, y \in \bbZ^d,\; x \ne y\}$.  Let $(\Om, \cF) = ([0, \infty)^{\bar{E}}, \cB([0,\infty)^{\otimes \bar E}))$ be a measurable space equipped with the Borel-$\si$-algebra.  Similarly as before, for $\om \in \Om$ and $e \in \bar{E}$, we call $\om(e)$ the conductance of the edge $e$ and write $\om(x,y) = \om(y, x) = \om(\{x,y\})$.  Throughout, we will assume that $\mu^{\om}(x) \ldef \sum_{y \in \bbZ^d} \om(x,y)<\infty$ for any $x \in \bbZ^d$. The measure space $(\Om, \cF)$ is naturally equipped with a group of space shifts $\big\{\tau_z : z \in \bbZ^d\big\}$, which act on $\Om$ as
\begin{align*}
  (\tau_z \om)(x, y) \; := \; \om(x+z, y+z),
  \qquad \forall\, \{x,y\} \in  \bar E.
\end{align*}
Henceforth, we consider a probability measure $\prob$ on $(\Om, \cF)$, and we write $\mean$ to denote the expectation with respect to $\prob$.

For any $\om \in \Om$, we consider a variable speed random walk $(X_t : t \geq 0)$ on $\bbZ^d$ with generator $\cL^{\om}_X$ which acts on functions $f\!: \bbZ^d \to \bbR$ with finite support as
\begin{align*} 
  \cL^{\om}_X f(x)
  \;=\;
  \sum_{y \in \bbZ^d} \om(x,y)\, \big( f(y) - f(x) \big).
\end{align*}
  Again we denote by $\Prob_{x}^{\om}$ the law of the process on the space of $\bbZ^d$-valued c\`adl\`ag functions on $\bbR$, starting in $x$.    Moreover, we denote by $p^{\om}(t,x,y) \ldef \Prob_{x}^{\om}[X_t = y]$ for $x, y \in \bbZ^d$ and $t \geq 0$ the heat kernel, that is the transition density with respect to the counting measure as the reversible reference measure. 
\begin{assumption} \label{ass:Plongrange}
 Suppose that $\prob$ satisfies the following conditions.
  \begin{enumerate}[(i)]
    \item $\prob$ is stationary and ergodic with respect to space shifts $\{\tau_x : x \in \bbZ^d\}$ on $\bbZ^d$, i.e.\ $\prob \circ \tau^{-1}_x = \prob$ for all $x \in \bbZ^d$ and $\prob[A] \in \{0,1\}$ for all $A \in \cF$ such that $\tau_x(A) = A$.
    
    \item $\prob\bigl[\mu^{\om}(0) > 0\bigr] = 1$ and $\mean\bigl[\sum_{x \in \bbZ^d} \om(0,x)\, |x|^2 \bigr] < \infty$.
    
    \item $\prob$ is irreducible in the sense that
    \begin{align*}
      \prob\Bigl[
        \bigl\{ 
          \om \,:\, \sup\nolimits_{t \geq 0}\, p^{\om}(t,0,x) > 0
        \bigr\}
      \Bigr]
      \;=\;
      1,
      \qquad \forall\, x \in \bbZ^d.
    \end{align*}
  \end{enumerate}
\end{assumption}

\begin{assumption} \label{ass:momentLongrange}
  There exist $p, q \in (1, \infty]$ satisfying
  \begin{align*}
    \frac{1}{p} \,+\, \frac{1}{q}
    \;<\;
    \frac{2}{d}
  \end{align*}
  such that
  \begin{align}  \label{eq:moment_pq}
    \mean\biggl[
      \Bigl( {\textstyle \sum_{x \in \bbZ^d}\;} \om(0,x)\, |x|^2  \Bigr)^p
    \biggr]
    \;<\; 
    \infty
    \quad \text{and} \quad
    \mean\bigl[\om(0,x)^{-q}\bigr] \;<\; \infty
  \end{align}
  for any $x\in \bbZ^d$ with $|x|=1$. In particular, $\om(x,y)>0$ $\prob$-a.s.\ whenever $x\sim y$. 
\end{assumption}
Assumption~\ref{ass:momentLongrange} is a direct extension of the moment condition $M(p,q)$ for $p,q\geq 1$ with $1/p+1/q<2/d$, see Definition~\ref{def:Mpq}, originally introduced in \cite{ADS15} for nearest-neighbour RCMs.
\begin{theorem}[QFCLT \cite{BCKW21}]\label{thm:ipLongrange}
  Suppose that Assumptions~\ref{ass:Plongrange} and \ref{ass:momentLongrange} hold.  Then, for $\prob$-a.e.\ $\om$, the process $X^{(n)} = \bigl(n^{-1} X_{n^2 t} : t \geq 0 \bigr)$ converges, under $\Prob_{\!0}^{\om}$, in law towards a Brownian motion on $\bbR^d$ with a deterministic non-degenerate covariance matrix $\Si^2$.
\end{theorem}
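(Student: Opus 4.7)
The plan is to follow the two-part corrector strategy laid out in Section~\ref{sec:proofCLT}, adapting each step from the nearest-neighbour to the long-range setting. First, I would introduce the Hilbert space $L^2_{\mathrm{cov}}$ of cocycle random fields $\Psi\!:\Om\times\bbZ^d\to\bbR$ with weighted norm
$$\Norm{\Psi}{L^2_{\mathrm{cov}}}^2 \;\ldef\; \mean\Big[{\textstyle\sum_{x\in\bbZ^d}}\;\om(0,x)\,\Psi^2(\om,x)\Big],$$
its subspaces $L^2_{\mathrm{pot}}$ (closure of horizontal gradients of bounded local functions) and $L^2_{\mathrm{sol}}$ (its orthogonal complement). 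Assumption~\ref{ass:Plongrange}(ii) gives $\Pi_j\in L^2_{\mathrm{cov}}$ thanks to the finite second moment of the jump distribution, so the orthogonal decomposition $\Pi_j=\chi_j+\Phi_j$ defines the corrector $\chi=(\chi_1,\dots,\chi_d)$ and harmonic coordinates $\Phi(\om,x)=x-\chi(\om,x)$ satisfying $\cL^{\om}_X\Phi(\om,\cdot)=0$. Proposition~\ref{prop:Phi+chi} then generalises verbatim.

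Second, I would prove the QFCLT for $M_t=X_t-\chi(\om,X_t)$ via Helland's martingale convergence theorem exactly as in Proposition~\ref{prop:mconv}. The L\'evy system formula continues to hold, giving the predictable quadratic variation and the compensator of the truncated jump-square process as space-time ergodic averages along the environment process $(\tau_{X_t}\om:t\geq 0)$. The environment process is ergodic with respect to $\prob$ by a standard argument based on Assumption~\ref{ass:Plongrange}(iii), so the time ergodic theorem yields convergence of $\langle v\cdot M^{(n)}\rangle_t$ to $t\,(v\cdot\Si_X^2 v)$ with
$$\Si_X^2(i,j)\;=\;\mean\Big[{\textstyle\sum_{x\in\bbZ^d}}\;\om(0,x)\,\bigl(e_i\cdot\Phi(\om,x)\bigr)\bigl(e_j\cdot\Phi(\om,x)\bigr)\Big],$$
while the Lindeberg-type condition on the compensator follows by dominated convergence from $v\cdot\Phi\in L^2_{\mathrm{cov}}$ and the cocycle property, using the identical argument that produced \eqref{eq:compensator}. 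Non-degeneracy of $\Si_X^2$ can be read off the Dirichlet form, as in the nearest-neighbour case.

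Third, I would establish $\sup_{t\leq T}n^{-1}|\chi(\om,X_{n^2 t})|\to 0$ in $\Prob^\om_0$-probability, for which it suffices by \cite[Proposition~2.13]{ADS15} to show $\ell^\infty$-sublinearity of the corrector on balls, as in \eqref{eq:sublin_corr}. The $\ell^1$-sublinearity statement of Proposition~\ref{prop:l1_sublin} survives unchanged: in the proof, the nearest-neighbour $\ell^1$-Poincar\'e inequality is applied to the function $x\mapsto(\chi_j-\mD\phi_{j,k})(\om,x)$, and only the nearest-neighbour part of $\Norm{\cdot}{L^2_{\mathrm{cov}}}$ is needed on the right-hand side, controlled by the Cauchy--Schwarz trick together with the negative-moment bound $\mean[\om(0,e)^{-q}]<\infty$ for nearest neighbours supplied by Assumption~\ref{ass:momentLongrange}. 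The upgrade to $\ell^\infty$-sublinearity then proceeds as in Proposition~\ref{prop:sublin_corr} by covering $B(0,n)$ with balls of radius $n/m$ and applying a maximal inequality to the harmonic functions $u^z_j(\om,x)=\chi_j(\om,x)-e_j\cdot(x-z)$ on each such ball, combined with the Krengel--Pyke-type uniform ergodic theorem (Proposition~\ref{prop:krengel_pyke}).

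The main obstacle is precisely the long-range analogue of the maximal inequality in Theorem~\ref{thm:MI}: one needs to show that $\cL^{\om}_X$-harmonic functions on $B(x_0,n)$ satisfy $\max_{B(x_0,n)}|u|\leq\Lambda^\om(B(x_0,2n))\Norm{u}{1,B(x_0,2n)}$, where $\Lambda^\om$ is a power of a product of $p$-averages of $\mu^{\om,2}(x)\ldef\sum_{y}\om(x,y)|x-y|^2$ and $q$-averages of $\sum_{y\sim x}1/\om(x,y)$, with $1/p+1/q<2/d$. Running Moser iteration against a cut-off test function $\eta$ supported on a ball produces an energy term together with non-local boundary contributions of the form $\sum_{x\in B,\,y\notin B}\om(x,y)(u(x)\eta(x))^2$, and it is precisely these boundary contributions that force the weight $|x-y|^2$ in the $p$-moment condition \eqref{eq:moment_pq}, via $(\eta(x)-\eta(y))^2\leq Cn^{-2}|x-y|^2$. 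Once this maximal inequality is in hand and combined with Proposition~\ref{prop:krengel_pyke} applied to the stationary field $\mu^{\om,2}(\cdot)$ (whose integrability condition matches \eqref{eq:moment_pq}), the proof concludes exactly as in Proposition~\ref{prop:sublin_corr}.
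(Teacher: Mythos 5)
Your first two steps (construction of the corrector via the orthogonal decomposition of $L^2_{\mathrm{cov}}$, now with the norm summing over all of $\bbZ^d$ with weight $\om(0,x)$, and the martingale FCLT via Helland's theorem) are sound and match the standard part of the argument; Assumption~\ref{ass:Plongrange}(ii) is indeed exactly what makes $\Pi_j\in L^2_{\mathrm{cov}}$. The gap is in the third step. You propose to upgrade $\ell^1$- to $\ell^\infty$-sublinearity through a long-range maximal inequality proved by Moser iteration, but this is precisely what is not available under Assumption~\ref{ass:momentLongrange}: as noted right after Theorem~\ref{thm:ipLongrange} and again at the end of Section~\ref{sec:longrange}, for a non-local operator the harmonicity of $u$ on $B(x_0,n)$ involves the values of $u$ arbitrarily far outside the ball, so each Caccioppoli/Moser step carries a \emph{tail term} of the form $\sum_{x\in B}\eta(x)^2|u(x)|\sum_{y\notin B}\om(x,y)|u(y)|$, not merely the boundary energy $\sum_{x\in B,\,y\notin B}\om(x,y)(u(x)\eta(x))^2$ that your bound $(\eta(x)-\eta(y))^2\leq Cn^{-2}|x-y|^2$ addresses. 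Controlling that tail for degenerate, unbounded weights is exactly the difficulty that so far has been handled only under the strictly stronger Assumption~\ref{ass:momentLongrange2} (used in \cite{CKW24} for the local limit theorem), and it cannot be absorbed into the $p$-moment of $\sum_y\om(0,y)|y|^2$ alone.

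The proof in \cite{BCKW21} therefore takes a different route for the elliptic regularity input: it does not establish a maximal inequality or everywhere-sublinearity of the corrector at all (cf.\ Remark~\ref{rem:sublinear}), but instead obtains the required regularity in the form of exit time estimates, derived by truncating the long-range conductances to finite volume, and uses these to control $\sup_{t\le T}n^{-1}|\chi(\om,X_{n^2t})|$ along the path of the walk rather than uniformly over balls. To repair your argument you would either have to supply a genuinely new control of the non-local tail term under Assumption~\ref{ass:momentLongrange}, or switch to the exit-time/truncation strategy.
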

Similarly as the QFCLTs in the previous sections, the proof of Theorem~\ref{thm:ipLongrange} relies on  elliptic regularity techniques. However, unlike those results, elliptic regularity comes in the form of exit time estimates, obtained by truncation to finite volume, since iterative techniques such as the usual Moser iteration do not perfom well in the long-range setting as we will explain below. The approach can also be used show FCLTs for random walks under deterministic conductance configurations \cite{Bi23}.

Under a weaker moment condition, namely Assumption~\ref{ass:momentLongrange} with $p = q = 1$, a central limit theorem for $X$ has recently been obtained in \cite{Fa23}. Spectral homogenization properties of the discrete Laplace operator with random conductances satisfying Assumptions~\ref{ass:Plongrange} and \ref{ass:momentLongrange} have been studied in \cite{FHS19}, where  almost-sure homogenization of the discrete Poisson equation and of the top of the Dirichlet spectrum is obtained.

\begin{example}[Long-range percolation]
Theorem~\ref{thm:ipLongrange} applies to random walks on a family of long-range percolation graphs.  These graphs are obtained by taking $\bbZ^d$ as an underlying graph and adding edges independently with a probability that depends only on the distance between the endpoints.  While this probability is typically assumed to decay as a power of the distance, the formulation of the moment condition in Assumption~\ref{ass:momentLongrange} only requires a summability condition.  More precisely, let $p\!: \bbZ^d \to [0,1]$ be a function such that
  \begin{enumerate}
    \item[(i)] $p(x) = p(-x)$ for all $x \in \bbZ^d$,
    \item[(ii)] $p(0) = 0$ and $p(x) = 1$ whenever $|x|=1$,
      \item[(iii)] $\sum_{x \in \bbZ^d} p(x) |x|^{2p} < \infty$ for some $p > d/2$.
  \end{enumerate}
  For any $\{x, y\} \in \bar{E}$ we define the random conductances, $\om(x,y)$, only taking values in $\{0, 1\}$, by setting $\om(x,y) = 1$ with probability $p(y-x)$ and $\om(x,y) = 0$ otherwise, independently of all other edges.  Then Assumptions~\ref{ass:Plongrange} and \ref{ass:momentLongrange} are satisfied, see \cite[Corollary~2.3]{BCKW21} .  Finally, in the context of power-law decaying connection probabilities, condition~(iii) can be rephrased as $p(x) = |x|^{-s + o(1)}$ for $|x| \to \infty$ for some $s > 2d$.   We refer to \cite[Section~2.2]{BCKW21} for a more detailed discussion of this example.
\end{example}
The corresponding local limit theorem has recently been shown under the following stronger moment condition.

\begin{assumption} \label{ass:momentLongrange2}
  There exist $p, q \in (1, \infty]$ satisfying
  \begin{align*}
    \frac{1}{p} \,+\, \frac{1}{q}
    \;\leq\;
   \bigg(1 + \frac 1 p \bigg) \frac{1}{d}, \qquad \frac 1 {p-1}+\frac 1 q \; < \; \frac 2 d,
  \end{align*}
  such that \eqref{eq:moment_pq} is satisfied,  and that either the following condition holds, 
\begin{align*}
  \mean\biggl[
    \Bigl( {\textstyle \sum_{x \in \bbZ^d}\;} \om(0,x)\, |x|^{d+2}  \Bigr)^p
  \biggr] \; < \; \infty,
\end{align*}
or $q = + \infty$ which is equivalent to $$\inf_{x,y\in \bbZ^d: |x-y|=1} \om(x,y)>0.$$
\end{assumption}

\begin{theorem}[Quenched local CLT \cite{CKW24}] \label{thm:lclt_longrange} 
  Suppose that Assumptions~\ref{ass:Plongrange} and \ref{ass:momentLongrange2} hold. 
  Then, for any $T_2> T_1 > 0$ and $K > 0$,
  \begin{align*}
    \lim_{n \to \infty} \sup_{|x|\leq K} \sup_{ t\in [T_1, T_2]}
    \big| n^d \, p^{\om}(0, n^2 t, \lfloor nx \rfloor) - \bar p^\Sigma(t,0,x) \big|
    \;=\;
    0,
    \qquad \text{for }\prob\text{-a.e. }\om,
  \end{align*}
 where $\Sigma^2$ is the covariance matrix appearing in the limit process in Theorem~\ref{thm:dyn_ip}.
\end{theorem}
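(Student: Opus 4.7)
The plan is to follow the Barlow--Hambly scheme \cite{BH09}, as adapted to the degenerate ergodic setting in Section~\ref{sec:ergodic}: upgrade the QFCLT of Theorem~\ref{thm:ipLongrange} to uniform convergence of heat kernels by combining it with a parabolic regularity estimate. Concretely, define $f_n(t, x) \ldef n^d p^\om(n^2 t, 0, \lfloor nx \rfloor)$ and view $(f_n)_{n\in\bbN}$ as a sequence in $C([T_1, T_2] \times \overline{B(0,K)})$. The goal is to show that $(f_n)$ is precompact in the supremum norm and that every subsequential limit equals $\bar p^\Sigma(\cdot, 0, \cdot)$. Precompactness splits into (i) equiboundedness, provided by a Nash-type on-diagonal upper bound $p^\om(t, 0, y) \leq C(\om)\, t^{-d/2}$ valid for $t$ beyond some random scale, and (ii) equicontinuity, provided by a parabolic Hölder regularity estimate at the diffusive scale. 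Identification of the limit comes from Theorem~\ref{thm:ipLongrange}: weak convergence against any $\phi \in C_c(\bbR^d)$ forces
\begin{align*}
  \int_{\bbR^d} f_n(t, x)\, \phi(x)\, \md x
  \;\longrightarrow\;
  \int_{\bbR^d} \bar p^\Sigma(t, 0, x)\, \phi(x)\, \md x,
\end{align*}
for $\prob$-a.e.\ $\om$ and every fixed $t > 0$, which pins down $\bar p^\Sigma$ as the unique continuous candidate.

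The equiboundedness step parallels the derivation of the on-diagonal part of Theorem~\ref{thm:hkeCSRW}: a Moser/Nash iteration starting from a weighted Sobolev inequality on $\bbZ^d$. The moment on $\sum_x \om(0, x)\, |x|^2$ from Assumption~\ref{ass:Plongrange}(ii) and the first inequality in \eqref{eq:moment_pq} supply the random averages that control the Sobolev constant on large balls via Proposition~\ref{prop:krengel_pyke}, yielding the desired on-diagonal bound for $t$ larger than a random minimal scale.

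The main technical obstacle is step (ii). One seeks an oscillation estimate of the form
\begin{align*}
  \osc_{(n^2/2,\, n^2] \times B(0, n/2)}\! u
  \;\leq\;
  \theta(\om, n)\, \| u \|_{\infty},
\end{align*}
for $u$ caloric with respect to $\cL^\om_X$ on $(0, n^2] \times B(0, n)$, with $\theta(\om, n) \to 0$, $\prob$-a.s., along sufficiently many scales. The principal new difficulty relative to the nearest-neighbour case of Section~\ref{sec:proofCLT} is the non-locality of $\cL^\om_X$: spatial cutoff functions no longer localize the Dirichlet energy, so jumps leaving the cutoff region produce tail contributions in every step of a Moser or De~Giorgi iteration. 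The strengthened moment condition in Assumption~\ref{ass:momentLongrange2}, in particular the $|x|^{d+2}$-weighted $p$-th moment of $\om(0, x)$, is designed precisely to absorb these long-range tails. The strategy would be: first derive a mean-value inequality for non-negative subcaloric functions that carries an explicit tail term controlled by the $|x|^{d+2}$-moment; then, using the lower moment $\mean[\om(0, x)^{-q}]$ for $\{x : |x|=1\}$ to control the conductances along a percolating nearest-neighbour backbone, upgrade it to a weak parabolic Harnack inequality in the spirit of \cite{ACS21, BS22, CKW24}; and finally iterate this on a geometric sequence of nested cylinders to deduce Hölder continuity at the macroscopic scale.

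With (i) and (ii) in hand, Arzelà--Ascoli delivers precompactness of $(f_n)$ in $C([T_1, T_2] \times \overline{B(0, K)})$, and the weak-limit identification from Theorem~\ref{thm:ipLongrange} forces the unique limit to be $\bar p^\Sigma$, giving the claimed uniform convergence. The decisive step is the long-range Hölder regularity and explains why the moment condition in Assumption~\ref{ass:momentLongrange2} must be strictly stronger than the one used for the QFCLT alone.
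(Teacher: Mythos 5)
Your proposal follows essentially the same route the paper attributes to \cite{CKW24}: the Barlow--Hambly scheme combining the QFCLT of Theorem~\ref{thm:ipLongrange} with a H\"older regularity estimate deduced from a weak parabolic Harnack inequality, where the decisive difficulty is controlling the non-local tail term in the Moser/De~Giorgi iteration via the strengthened $|x|^{d+2}$-weighted moment of Assumption~\ref{ass:momentLongrange2}. The survey itself only sketches this strategy and defers the details to \cite{CKW24}, so your outline is consistent with, and at the same level of detail as, the paper's treatment.
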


Similarly as for the local limit theorems for the nearest neighbour RCMs discussed above, the main step in the proof of Theorem~\ref{thm:lclt_longrange} is to establish a weak parabolic Harnack inequality, from which an oscillation inequality and a H\"older regularity estimate for non-local space-time harmonic function can be deduced. The approach requires a maximal inequality for such functions, which is possible to establish, for instance by Moser or De~Giorgi's iteration techniques. However, one fundamental difference from the analysis of equations with local discrete finite-difference operators, appearing the nearest-neighbour case, is the presence of a non-local tail term of the harmonic function, see for instance \cite{dCKP14, dCKP16, Str19, KW22a, KW24}, and so far the available analytic methods to control the tail term are mainly restricted to the setting of uniformly elliptic weights. To our knowledge, \cite{CKW24} is one of the first works providing a control on the tail term in a setting with unbounded weights, albeit requiring a stronger moment condition.  This makes the derivation of a local limit theorem and heat kernel estimates under optimal moment conditions appear to be a challenging open problem.

\bigskip 
\subsection*{Acknowledgement}
The author is grateful to the organisers of the conference Fractal Geometry
and Stochastics~7, held at TU~Chemnitz in September 2024,  for arranging a nice meeting and for inviting him to produce this article. He also thanks Martin Barlow, Marek~Biskup and David~Croydon for valuable comments and suggestions, and Martin Slowik for providing the simulations in Figure~\ref{fig:numsim}.

\bibliographystyle{abbrv}
\bibliography{literature_survey}

\end{document}